\documentclass[11pt]{amsart}
\usepackage{mathrsfs}
\usepackage{amsfonts}
\usepackage{amsmath}
\usepackage{amssymb}
\usepackage{amsthm}
\usepackage{enumerate}
\usepackage{color}
\usepackage{geometry}
\usepackage{hyperref}
\usepackage{float}
\usepackage{graphicx}
\usepackage{multirow}
\usepackage[numbers,sort&compress]{natbib}
\usepackage{color}

\allowdisplaybreaks
\hyphenpenalty=500
\tolerance=300



\numberwithin{equation}{section}

\theoremstyle{plain}
\newtheorem{prop}{Proposition}[section]
\newtheorem{coro}[prop]{Corollary}

\newtheorem{lemm}[prop]{Lemma}

\newtheorem{theorem}[prop]{Theorem}

\theoremstyle{definition}
\newtheorem{defi}[prop]{Definition}

\newtheorem{rema}[prop]{Remark}

\renewcommand\aa{a}
\newcommand\bb{b}

\newcommand\cc{c}
\newcommand\dd{d}
\newcommand\ff{f}
\renewcommand\gg{g}
\newcommand\hh{h}
\newcommand\HS[1]{\leavevmode\null\hspace{#1mm}}

\newcommand\Irr{\mathsf{Irr}}
\newcommand\kk{k}
\newcommand\KK{k}
\newcommand\MU{\mu}
\newcommand\MLbs{\mathsf{MLb}_\mathsf{s}}
\newcommand\MLb{\mathsf{MLb}}
\newcommand\MLie{\mathsf{MLie}}
\newcommand\NF{\mathsf{NF}}
\newcommand\pdots{\mathrel{\HS{0.2}{\cdot}{\cdot}{\cdot}\HS{0.2}}}
\renewcommand\ss{s}
\renewcommand\SS{S}
\newcommand\uu{u}
\newcommand\vv{v}

\newcommand\xx{x}
\newcommand\yy{y}
\newcommand\zz{z}
\newcommand\wdots{, ..., }
\newcommand\XX{X}
\newcommand\Lbs{\mathsf{Lb}_{\mathsf{s}}}
\newcommand\Lb{\mathsf{Lb}}
\newcommand{\chart}{\mathsf{char}}
\newcommand\D[1]{|#1|}
\newcommand{\gsb}{Gr\"{o}bner--Shirshov basis}
\newcommand\Id{\mathsf{Id}}
\newcounter{ITEM}
\newcommand\ITEM[1]{\setcounter{ITEM}{#1}\leavevmode\hbox{\rm(\roman{ITEM})}}
\newcommand\mA{\mathcal{A}}
\newcommand\ov[1]{\overline{#1}}
\newcommand{\sm}{\setminus}
\newcommand{\supp}{\mathsf{supp}}


\title{Gr\"{o}bner--Shirshov bases theory and extensions of Leibniz superalgebras}

\author{YuXiu Bai$^*$}
\address{School of Mathematical Sciences, South China Normal University, Guangzhou 510631, P. R. China}
\email{\small 550291315@qq.com}

\author{Yuqun Chen$^{\sharp}$}
\address{School of Mathematical Sciences, South China Normal University, Guangzhou 510631, P. R. China}
\email{yqchen@scnu.edu.cn}

\thanks{Supported by the NNSF of China (11571121), the NSF of Guangdong Province (2017A030313002) and the Science and Technology Program of Guangzhou (201707010137)}

\thanks{${}^*$Supported by the Innovation Project of Graduate School of South China Normal University}

\thanks{${}^{\sharp}$Corresponding author}

\keywords{Leibniz superalgebra; Gr\"{o}bner--Shirshov basis; metabelian Leibniz superalgebra;  metabelian Lie algebra;  extension}

\begin{document}
\renewcommand{\thefootnote}{\fnsymbol{footnote}}
\footnotetext{2020 \emph{Mathematics Subject Classification.} 17A32, 17A61, 17A70, 16S15, 16S70.}
\renewcommand{\thefootnote}{\arabic{footnote}}
\begin{abstract}
In this paper, we elaborate  Gr\"{o}bner--Shirshov bases method for Leibniz (super)algebras. We show that there is a unique reduced Gr\"{o}bner--Shirshov basis for every (graded) ideal of a free Leibniz (super)algebra. As applications, we construct linear bases of free metabelian Leibniz superalgebras and  new linear bases of free metabelian Lie algebras.
We present a complete characterization of extensions of a Leibniz (super)algebra by another Leibniz (super)algebra, where the former is presented by generators and relations.
\end{abstract}

\maketitle

\section{Introduction}\label{Intro}
We recall that a superalgebra over a field~$\kk$ is a vector space~$\mA$ with a direct sum decomposition
$\mA=\mA_0\oplus \mA_1$
together with a bilinear multiplication $(- -)$: $\mA \times \mA\rightarrow\mA$ such that~$(\mA_i  \mA_j)\subseteq \mA_{i+j}  $,
where the subscripts are elements of~$\mathbb{Z}_2$. The \emph{parity}~$\D\xx$ of every element~$\xx$ in~$\mA_0$ is~0 and the parity~$\D\xx$ of every nonzero element~$\xx$ in~$\mA_1$ is 1. A nonempty subset~$S$ of~$\mA$ is called homogeneous if~$S\subseteq \mA_0\cup\mA_1$.
 If a superalgebra~$\mA$ satisfies the Leibniz superidentity
$$(x(yz)) = ((xy)z)-(-1)^{|z||y|}((xz)y),$$
for all elements~$\xx,\yy,\zz$ in~$\mathcal{A}_0\cup \mathcal{A}_1$,
then~$\mA$ is called a \emph{Leibniz superalgebra}~\cite{supernew}. In particular, if a Leibniz superalgebra~$\mA$ equals its even part, i.e., $\mA=\mA_0$, then~$\mA$ is an ordinary \emph{Leibniz algebra}, which was introduced by Bloh~\cite{9} and rediscovered by Loday~\cite{24}.
In mathematics and mathematical physics, Leibniz algebras have interactions and applications in many areas such as: classical or noncommutative differential geometry, vertex operator algebras, structure theory, and representation theory, etc. For additional explanations and motivations we refer to \cite{6,16, 20,des,26,belian,superdes}.

Gr\"{o}bner bases and Gr\"{o}bner--Shirshov bases were invented independently by A.I. Shirshov for ideals of free (commutative, anti-commutative) non-associative algebras \cite{D24, D25}, free Lie algebras \cite{D25} and implicitly free associative algebras \cite{D25} (see also \cite{D2, D3}), by H. Hironaka \cite{D18} for ideals of the power series algebras (both formal and convergent), and by B. Buchberger \cite{D12} for ideals of the polynomial algebras. Gr\"{o}bner bases and Gr\"{o}bner--Shirshov bases theories have been proved to be very useful in different branches of mathematics, including commutative algebra and combinatorial algebra. They are powerful tools concerning the following classical problems: normal form; word problem; conjugacy problem; rewriting system; automaton; embedding theorem; PBW theorem; extension; homology; growth function; Dehn function; complexity; etc. See, for instance, the books~\cite{D1,D11,book20,D13} and the surveys~\cite{D4, D7,D10}.

Let $\mathfrak{A},\mathfrak{B},\mathfrak{E}$ be Leibniz (super)algebras over a  field $k$ and a short exact sequence
\begin{equation}\label{ext}
0\rightarrow \mathfrak{A} \rightarrow  \mathfrak{E}  \rightarrow \mathfrak{B} \rightarrow 0.
\end{equation}
Then $\mathfrak{E}$ is called an \emph{extension} of $\mathfrak{B}$ by $\mathfrak{A}$.

The classification is up to an isomorphism of Leibniz algebras that stabilizes $\mathfrak{A}$ and costabilizes $\mathfrak{B}$ and we denote by $\mathcal{E} (\mathfrak{B},\mathfrak{A})$ the isomorphism classes of all extensions of $\mathfrak{B}$
by $\mathfrak{A}$ up to this equivalence relation.
If $\mathfrak{A}^2=0$, i.e., $\mathfrak{A}$ is abelian, then $\mathcal{E} (\mathfrak{B},\mathfrak{A})\cong HL^2(\mathfrak{B},\mathfrak{A})$, where
$HL^2(\mathfrak{B},\mathfrak{A})$ is the second cohomology group \cite{loday}. Our Corollaries \ref{th2.8} and \ref{th2.8'} give a characterization of $ \mathcal{E} (\mathfrak{B},\mathfrak{A})$.

Extension is significant for cohomology  of Leibniz (super)algebra and there are many results, see \cite{coho1,coho2,Dh,belian}.
As for the theory of group extensions, their interpretation in terms
of cohomology is well known.

In group theory, the Hall's problem (\cite{hall},  p. 228) ``it is difficult to determine the identities leading to
conditions for an extension, where the former group is presented by generators and
relations." has been solved by Yuqun Chen \cite{chen08} by using Gr\"{o}bner-Shirshov bases method.

Inspired by Hall's problem above, we want to find all extensions of $\mathfrak{B}$ by $\mathfrak{A}$, where  in (\ref{ext}) $\mathfrak{B}$ is presented
by generators and relations.

There are many results on  extensions of algebras, in particular, on extensions of associative algebras, Lie (super)algebras, Leibniz (super)algebras, etc, see, for example, \cite{mi,mi05,ch48,fp14,mo53,cq,Dong}. Mostly, they deal with some special cases for extensions. There is no result to determine
conditions for an extension of Leibniz (super)algebras when the former one is presented by generators and relations.

We first establish a Composition-Diamond lemma for Leibniz (super)algebras, see Theorem \ref{cd-lemma for Lb}.
By using Theorem \ref{cd-lemma for Lb}, we construct linear bases of free metabelian Leibniz (super)algebras and those for free metabelian Lie algebras,
 and we give a complete characterization of the extensions of  Leibniz (super)algebras of $\mathfrak{B}$ by $\mathfrak{A}$, where $\mathfrak{B}$ is presented by generators and relations, see Theorem \ref{th2.5}.
Theorems \ref{cd-lemma for Lb} and \ref{th2.5} are the main results in this paper.

Linear bases of free metabelian Leibniz algebras generated by finite sets are obtained in~\cite{mlei}. A linear basis of a free metabelian Lie algebra is constructed in~\cite{mlie}.

The paper is organized as follows. In Section 2, we present a construction of a linear basis $\NF(\XX)$ of the free Leibniz (super)algebra $\Lbs(X)$ generated by $X=\XX_0\cup\XX_1$.
In Section 3, we first introduce a well order on $\NF(\XX)$ and investigate the elements of the graded ideal~$\Id(S)$ of~$\Lbs(X)$ generated by~$S$. We elaborate a theory of Gr\"obner--Shirshov bases for Leibniz (super)algebras and show that there is a unique reduced Gr\"{o}bner--Shirshov basis for every (graded) ideal of a free Leibniz (super)algebra.
 In Section 4, we construct linear bases of free metabelian Leibniz (super)algebras generated by any sets and those for free metabelian Lie algebras.
 In Section 5, we present a complete characterization of extensions of a Leibniz (super)algebra by another Leibniz (super)algebra, where the former is presented by generators and relations. As an application, we give a construction of elements in  $\mathcal{E} (\mathfrak{B},\mathfrak{A})$ when  $\mathfrak{A}^2=0$.
\section{Linear bases for free Leibniz superalgebras}\label{linear-basis-order}

Our aim in this section is to construct linear bases for free Leibniz (super)algebras by applying the Gr\"obner--Shirshov bases method for non-associative algebras. For completeness, we first recall the Composition-Diamond lemma for non-associative algebras, see~\cite{D24}  for more details.

\section{Linear bases of free Leibniz superalgebras}\label{linear-basis-order}

Our aim in this section is to construct linear bases of free Leibniz (super)algebras by applying the Gr\"obner--Shirshov bases method for non-associative algebras. For completeness, we first recall the Composition-Diamond lemma for non-associative algebras, see~\cite{D24}  for more details.

\subsection{Gr\"{o}bner--Shirshov bases method for non-associative algebras}
In the whole paper, we assume that~$(\XX,\prec)$ is a fixed well-ordered set.
We recall that \emph{terms} over~$\XX$ are ``words with brackets''; they are
 defined by the following induction  on length:

\ITEM1 Every element~$\aa$ of~$\XX$ is a term over~$\XX$ of~\emph{length} 1, denoted by~$\ell(\aa)=1$.

\ITEM2 If~$\mu=(\mu_1 \mu_2)$, where~$\mu_1$ and~$\mu_2$ are terms over~$\XX$ of length~$m$ and~$n$ respectively, then~$\mu$ is a term over~$\XX$ of length~$m+n$, and the length of $\mu$ is~$\ell(\mu)=m+n$.

 We denote by~$\XX^{**}$ the set of all terms over~$\XX$.

Let~$\star$ be a letter not in~$\XX$. Then a term~$\mu$ over~$\XX\cup \{\star\}$ is called a \emph{star term} if the star~$\star$ occurs exactly one time in~$\mu$. For instance, for~$\aa,\bb$ in~$\XX$,
 the term~$(((ab)\star)(bb))$ is a star term while $((aa)(\star\star))$ and~$((ab)a)$ are not.

 Let~$\kk\XX^{**}$ be the free non-associative algebra generated by~$\XX$ over a field $\kk$ ($\kk\XX^{**}$ is the vector space over $k$ with the $k$-basis ~$\XX^{**}$).
 Then for each star term $\mu$, and for each~$\ff$ in~$\kk\XX^{**}$, the notation~$\mu_{\star\mapsto \ff}$ means the resulted polynomial by replacing the star in~$\mu$ with~$\ff$. For instance,
 $(((ab)\star)(bb))_{\star\mapsto \ff}=(((ab)f)(bb)).$

A well order~$<$ on~$\XX^{**}$ is called a \emph{monomial order} if for all~$\mu,\nu,\eta$ in~$\XX^{**}$ such that~${\mu<\nu}$, we have~$(\mu\eta)<(\nu\eta)$ and ~$(\eta\mu)<(\eta\nu)$.

Let $<$ be a monomial order on $\XX^{**}$.  Then for each nonzero polynomial
$\ff=\alpha \mu+\sum \alpha_{i}\mu_{i}$
in~$\kk\XX^{**}$,
where $\mu_i<\mu$, $0\neq \alpha, \alpha_i\in \kk$, $\mu, \mu_i\in \XX^{**}$, we call~$\mu$ the \emph{leading monomial} of~$\ff$, denoted  by~$\bar\ff$. Moreover, we call~$\ff$ a \emph{monic} polynomial if~$\alpha=1$. A nonempty subset $S\subseteq \KK\XX^{**}$ is called a \emph{monic} set, if every element in $S$ is monic.

We are now ready to recall  a  method of Gr\"{o}bner--Shirshov bases for non-associative algebras.
Let~$S\subseteq \KK\XX^{**}$ be a monic set. For all~$\ff, \gg$ in~$\SS$, if there is some star term~$\mu$ such that~$\bar\ff=\overline{\mu_{\star\mapsto g}}$, then
$ (\ff, \gg)_{\bar{\ff}}:=\ff-\mu_{\star\mapsto g}$
is called an \emph{inclusion  composition} of~$\SS$.

For any $f,g\in\KK\XX^{**},\ \mu\in\XX^{**}$, we define
$$
f\equiv g\ \ \mod(S,\mu),\ \ \mbox{if } \ f- g=\sum \alpha_{i}(\mu_{i})_{\star\mapsto \ss_i},
$$
where each~$\alpha_{i}$ lies in~$k$, each~$\mu_{i}$ is a star term and each~$s_{i}$ lies in~$S$ such that~$\overline{(\mu_{i})_{\star\mapsto \ss_i}}<\mu$.

A monic set~$S$ is called a \emph{Gr\"{o}bner--Shirshov basis} in $\KK\XX^{**}$ if for every inclusion composition~$(\ff, \gg)_{\bar{\ff}}$ of~$\SS$ we have~$(\ff, \gg)_{\bar{\ff}}\equiv 0 \mod(S,\bar\ff)$.

\ \

{\it\noindent{\bf Composition-Diamond lemma for non-associative algebras} \emph{\cite{D24}}\ \
Let $<$ be a monomial order on $\XX^{**}$, $S\subseteq \KK\XX^{**}$  a monic set and $\operatorname{\Id(S)}$  the ideal of $\KK\XX^{**}$ generated by $S$. Then the set~$S$ is a Gr\"{o}bner--Shirshov basis in $\KK\XX^{**}$ if and only if
the set~$\{\tau\in X^{**}\mid \tau\neq \overline{\mu_{\star\mapsto \ss}}$  for any star term $\mu$ and~$s\in S\}$  is a linear basis of the
quotient algebra~$\KK\XX^{**}/\Id(S)$.
}

\subsection{Linear bases of free Leibniz superalgebras}
In this subsection, we consider free Leibniz (super)algebras as quotient algebras of free non-associative algebras.

Some definitions given below are preliminary to go further.
 Let~$\mA=\mA_0\oplus\mA_1$ be a superalgebra and let~$\mathcal{B}$ be a subspace of~$\mA$. Then $\mathcal{B}$ is called an \emph{ideal} of~$\mA$ if~$(xy),\ (yx)$ lie in $\mathcal{B}$ for all~$x\in\mA,\ y\in \mathcal{B}$.
 A subalgebra (resp. ideal)~$\mathcal{B}$ of~$\mA$  is called \emph{graded} if~$\mathcal{B}$ is a subalgebra (resp. ideal) of~$\mA$ and it contains the homogeneous components of all its elements, i.e., $\mathcal{B}=(\mathcal{B}\cap\mA_0)\oplus (\mathcal{B}\cap\mA_1)$. So if $\mathcal{B}=\mathcal{B}_0\oplus\mathcal{B}_1$ is a graded ideal of~$\mA$, then $\mA/\mathcal{B}=\mA_0/\mathcal{B}_0\oplus\mA_1 /\mathcal{B}_1$ is the quotient superalgebra of~$\mA$ by~$\mathcal{B}$.
Let~$\mA'=\mA'_0\oplus\mA'_1$ be a superalgebra.   
 A map~$\varphi:\ \mA\rightarrow\mA'$ is said to  \emph{preserve parity}, if~$\varphi(\mA_0)\subseteq\mA'_0,\ \varphi(\mA_1)\subseteq\mA'_1$.
Moreover, $\varphi$ is called a \emph{homomorphism}  if~$\varphi$ preserves parity and is a linear map with~$\varphi((xy))=(\varphi(x)\varphi(y))$ for all~$x,y\in\mA$.
Let $S$ be a homogeneous subset of $\mA$, i.e., $S\subseteq \mA_0\cup\mA_1$. Then it is clear that the ideal $\Id(S)$ of $\mA$ generated by $S$ is a graded ideal.

Let~$X=\XX_0\cup \XX_1$ be a nonempty set, where each element of~$\XX_0$ is of parity~$0$ and each element of~$X_1$ is of parity~1.
The parity $|\mu|$ of~$\mu=(\mu_1\mu_2)\in X^{**}$ is defined to be~$|\mu_1|+|\mu_2|$ modulo by~2, where~$|\mu_1|$ and~$|\mu_2|$ are defined by induction on length. Thus, each element in $X^{**}$ has parity 0 or 1. A polynomial
$\ff=\sum \alpha_{i}\mu_{i}$ in ~$kX^{**}$, where each $\alpha_{i}\in k$ and $ \mu_{i}\in X^{**}$, is called {\it homogeneous} if each $\mu_{i}$ has the same parity.
Define
$$
S=\{(\mu(\nu\tau))-((\mu\nu)\tau)+(-1)^{|\nu||\tau|}((\mu\tau)\nu )\mid \mu, \nu, \tau \in \XX^{**}\}.
$$
Then $S$ is a monic homogeneous  subset of~$kX^{**}$. It is clear that the quotient algebra of~$kX^{**}$ by the graded ideal~$\mathsf{Id}(S)$ of~$kX^{**}$ generated by~$S$ is exactly the free Leibniz superalgebra $\Lbs(\XX)$ generated by~$X$, i.e., the Leibniz superalgebra $kX^{**}/\mathsf{Id}(S)$ is a solution of the following universal property: for any Leibniz superalgebra $\mA=\mA_0\oplus\mA_1$, for any map $\phi:\ X=\XX_0\cup \XX_1\rightarrow \mA=\mA_0\oplus\mA_1$ with $\phi(X_0)\subseteq\mA_0$ and $\phi(X_1)\subseteq\mA_1$, there exists a unique homomorphism $\Phi:\ kX^{**}/\mathsf{Id}(S)\rightarrow \mA$ such that $\Phi i=\phi$, where $i:\ X\rightarrow kX^{**}/\mathsf{Id}(S),\ x\mapsto x+\mathsf{Id}(S)$. Thus,
$$
\Lbs(\XX)=kX^{**}/\mathsf{Id}(S).
$$

By applying Composition-Diamond lemma for non-associative algebras, we shall get a linear basis of~$\Lbs(\XX)$.

It is clear that every term~$\mu\in X^{**}$ is of the unique form:
$$
\mu=((\cdots((\aa\mu_1)\mu_2)\cdots)\mu_n),
$$
where $\aa$ lies in~$\XX$ and each~$\mu_i$ is a term of length less than $\ell(\mu)$.  Therefore, we are able to introduce the following order on~$\XX^{\ast\ast}$.

For all~$\mu=((\cdots((\aa\mu_1)\mu_2)\cdots)\mu_n)$ and~$\nu=((\cdots((\bb\nu_1)\nu_2)\cdots)\nu_m)\in \XX^{**}$, define
\begin{align}\label{equa0.0}
\mu<'\nu\ \mbox{ if and only if } (\ell(\mu), \mu_n,\dots, \mu_1, \aa )<(\ell(\nu), \nu_m,\dots, \nu_1,  \bb )\ \ \mbox{lexicogaphically,}
\end{align}
where $\mu_{i},\nu_{j}$ are compared by induction on length.

 It is easy to show that the order~$<'$ is a monomial order on~$\XX^{\ast\ast}$.

Let $(\mA, (--))$ be a Leibniz superalgebra. For all elements~$\ff_1\wdots\ff_n$ in~$\mA$, define
$$
[\ff_1\cdots\ff_n]_{_L}:=((\cdots((\ff_1\ff_2)\ff_3)\cdots)\ff_n)  \ \  \ \mbox{(left-normed bracketing)},
$$
where $[\ff_1\cdots\ff_n]_{_L}:=f_1$ if $n=1$.

For example, let $a_1,a_2,a_3,a_4, f$ lie in $ \mA$.  We  have $[a_1a_2a_3a_4]_{_L}=(((a_1a_2)a_3)a_4), $ $[a_1(a_2a_3)a_4]_{_L}=((a_1(a_2a_3))a_4),$
$ [[a_1a_2a_3]_{_L}a_4]_{_L}=[a_1a_2a_3a_4]_{_L}$,
$ [a_1(a_2(a_3a_4))]_{_L}=(a_1(a_2(a_3a_4)))$, and $ [a_1a_2fa_3a_4]_{_L}=((((a_1a_2)f)a_3)a_4)$.

  Now we are ready to show  that the following set $\NF(\XX)$ is  a linear basis of the free Leibniz superalgebra $\Lbs(X)$ generated by~$X=\XX_0\cup\XX_1$. In particular, if~$X=X_0$, then  $\NF(\XX)$  is exactly  the linear basis of the free Leibniz algebra generated by $X$  constructed in~\cite{loday}.

\begin{theorem}\label{basisnf}
Let $S=\{(\mu(\nu\tau))-((\mu\nu)\tau)+(-1)^{|\nu||\tau|}((\mu\tau)\nu )\mid \mu, \nu, \tau \in \XX^{**}\}$.
Then we have the following claims.

\ITEM1 With the order (\ref{equa0.0}) on $\XX^{**}$, the set $S$ is a Gr\"{o}bner--Shirshov basis in $\KK\XX^{**}$.

\ITEM2  The following set forms a linear basis of~$\Lbs(\XX)$
$$
\NF(\XX):=\{ [\aa_1\cdots\aa_n]_{_L}\mid \aa_{i}\in \XX, 1\leq i\leq n, n\geq 1\}.
$$

\end{theorem}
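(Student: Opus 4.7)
The plan is to establish (i) by applying the Composition--Diamond lemma for non-associative algebras to $S$, from which (ii) follows immediately. The argument will proceed in four steps. First, the order $<'$ will be shown to be a monomial order on $X^{**}$: well-orderedness is inherited from the lexicographic structure of (\ref{equa0.0}) since each inductive subcall operates on strictly shorter subterms, while the compatibility $\mu <' \nu \Rightarrow (\mu\eta)<'(\nu\eta)$ and $(\eta\mu)<'(\eta\nu)$ follows from a direct check on left-normed forms: if $\mu=((\cdots(\aa\mu_1)\cdots)\mu_n)$ and $\eta=((\cdots(\cc\eta_1)\cdots)\eta_k)$, then $(\mu\eta)$ has tail $(\eta,\mu_n,\dots,\mu_1,\aa)$ and $(\eta\mu)$ has tail $(\mu_n,\dots,\mu_1,\eta_k,\dots,\eta_1,\cc)$, and lexicographic comparison of these tails preserves any strict inequality. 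Second, the leading monomial of a generic $f=(\mu(\nu\tau))-((\mu\nu)\tau)+(-1)^{|\nu||\tau|}((\mu\tau)\nu)$ in $S$ will be identified as $\bar f=(\mu(\nu\tau))$: all three summands have the same length, while their terminal tail components are $(\nu\tau)$, $\tau$, $\nu$, and since $\ell(\nu\tau)$ strictly exceeds $\ell(\nu)$ and $\ell(\tau)$, the inductive length comparison places $(\mu(\nu\tau))$ strictly above the other two.

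Third, the irreducible terms in $X^{**}$ will be characterized as $\NF(X)$: a term $\rho$ fails to match $\overline{\theta_{\star\mapsto s}}$ for any star term $\theta$ and $s\in S$ precisely when no subterm of $\rho$ has the form $(A(BC))$ with $A,B,C\in X^{**}$; writing $\rho=((\cdots((\aa\rho_1)\rho_2)\cdots)\rho_n)$ in its unique left-normed decomposition, an induction on length forces $\rho_i\in X$ for every $i$, giving exactly $\rho\in\NF(X)$. The main technical step is the fourth: verifying that every inclusion composition reduces to $0$ modulo $(S,\bar f)$. For $f,g\in S$ with $\bar g=(\mu'(\nu'\tau'))$ lying as a subterm of $\bar f=(\mu(\nu\tau))$, the cases are (a) $\bar g=\bar f$, which gives $f=g$ and trivial cancellation; (b) $\bar g$ is a subterm of $\mu$, $\nu$, or $\tau$ (possibly equal to one of them), which is handled by substituting $g-\bar g$ at the inner position and rewriting the residual terms $-((\mu\nu)\tau)+(-1)^{|\nu||\tau|}((\mu\tau)\nu)$ via further applications of $S$ applied outside the inner position, with all intermediate leading monomials strictly below $\bar f$; and (c) $\bar g=(\nu\tau)$, which forces $\tau=(\nu'\tau')$ and $\mu'=\nu$ and produces a genuine overlap.

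The main obstacle will be case (c). Direct expansion yields
\[
(f,g)_{\bar f}=-((\mu\nu)(\nu'\tau'))+(-1)^{|\nu|(|\nu'|+|\tau'|)}((\mu(\nu'\tau'))\nu)+(\mu((\nu\nu')\tau'))-(-1)^{|\nu'||\tau'|}(\mu((\nu\tau')\nu')),
\]
and each summand contains nested subterms of the form $(A(BC))$ that must be repeatedly rewritten via $S$. Closure relies on the super-Leibniz identity being compatible with itself under the rewrite $(A(BC))\mapsto((AB)C)-(-1)^{|B||C|}((AC)B)$, and the delicate point is consistent bookkeeping of the super-signs $(-1)^{|B||C|}$ across two nested levels of rewriting; the combinatorial pattern of cancellation parallels the non-super verification appearing in Loday's treatment of free Leibniz algebras \cite{loday}, now with super-transpositions introduced at every step. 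Once case (c) is verified, $S$ is a Gr\"obner--Shirshov basis in $\kk X^{**}$, and the Composition--Diamond lemma combined with the characterization of irreducible terms immediately gives claim (ii): $\NF(X)$ is a linear basis of $\Lbs(X)=\kk X^{**}/\Id(S)$.
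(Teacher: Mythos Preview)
Your proposal is correct and follows essentially the same approach as the paper: both identify $(\mu(\nu\tau))$ as the leading monomial under $<'$, reduce (i) to checking inclusion compositions, single out the overlap where $\bar g$ equals the second factor of $\bar f$ (your case (c)) as the essential computation, wave the remaining placements as routine, and then invoke the Composition--Diamond lemma to get (ii). One minor slip: the left-normed tail of $(\eta\mu)$ is $(\mu,\eta_k,\dots,\eta_1,c)$ with $\mu$ a single entry rather than $(\mu_n,\dots,\mu_1,\eta_k,\dots,\eta_1,c)$, but this does not affect the monomiality check.
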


 \begin{proof}
\ITEM1  We just show that every inclusion composition of the form~$(\ff, \gg)_{\bar{\ff}}$ in $S$ is trivial modulo~$(S, \bar{f})$, where
\begin{align*}
\ff&=(\mu'(\mu(\nu\tau)))-((\mu'\mu)(\nu\tau))+(-1)^{|(\nu\tau)||\mu|}((\mu'(\nu\tau))\mu),\\  \gg&=(\mu(\nu\tau))-((\mu\nu)\tau)+(-1)^{|\tau||\nu|}((\mu\tau)\nu)
\end{align*}
and~$\mu', \mu, \nu, \tau\in \XX^{**}$. By direct calculation, we obtain
\begin{align*}
 (\ff, \gg)_{\bar{\ff}}
=& \ff-(\mu'\gg)& \\
=& -((\mu'\mu)(\nu\tau))+(-1)^{|(\nu\tau)||\mu|}((\mu'(\nu\tau))\mu)
+(\mu'((\mu\nu)\tau))
-(-1)^{|\tau||\nu|}(\mu'((\mu\tau)\nu)) &\\
\equiv& -(((\mu'\mu)\nu)\tau)+(-1)^{|\tau||\nu|}(((\mu'\mu)\tau)\nu)
+(-1)^{|(\nu\tau)||\mu|}(((\mu'\nu)\tau)\mu)&\\
&-(-1)^{|(\nu\tau)||\mu|+|\tau||\nu|}(((\mu'\tau)\nu)\mu)
 +(((\mu'\mu)\nu)\tau)-(-1)^{|\nu||\mu|}(((\mu'\nu)\mu)\tau)&\\
 &-(-1)^{|\tau||(\mu\nu)|}(((\mu'\tau)\mu)\nu))
 +(-1)^{|\tau||(\mu\nu)|+|\mu||\nu|}(((\mu'\tau)\nu)\mu))
 -(-1)^{|\tau||\nu|}(((\mu'\mu)\tau)\nu)&\\
 &+(-1)^{|\tau||\nu|+|\tau||\mu|}
(((\mu'\tau)\mu)\nu)
+(-1)^{|\tau||\nu|+|\nu||(\mu\tau)|}(((\mu'\nu)\mu)\tau)&\\
&-(-1)^{|\tau||\nu|+|\nu||(\mu\tau)|+|\mu||\tau|}(((\mu'\nu)\tau)\mu)&\\
\equiv& 0\mod(S, \bar{\ff}).&
\end{align*}
Similarly, the other inclusion compositions of~$\SS$ are trivial. This shows that $S$ is a Gr\"{o}bner--Shirshov basis in $\KK\XX^{**}$.

Point \ITEM2  follows from Composition-Diamond lemma for non-associative algebras.
 \end{proof}


\section{Composition-Diamond lemma for Leibniz superalgebras}\label{CDlemma-Lei}

In this section we elaborate a method of Gr\"{o}bner-Shirshov bases for Leibniz (super)algebras.  From now on, by Theorem \ref{basisnf}, $\NF(\XX)$ is a linear basis of the free Leibniz superalgebra $\Lbs(\XX)$ generated by $\XX=\XX_0\cup\XX_1$.

Now we introduce an order on~$\NF(\XX)$. Let $\deg:X\rightarrow \mathbb{Z}_{>0}$ be a map.
For each $\mu=[\aa_1\aa_2\cdots\aa_n]_{_L}\in \NF(\XX)$, define $$\deg(\mu)=\sum_{i=1}^n \deg(\aa_i)\ \ \mbox{and }\
 \mathsf{wt}(\mu)=(\deg(\mu), \ell(\mu), \aa_1, \aa_2,\dots, \aa_n).
 $$
For instance, for~$\aa_1\wdots \aa_7\in\XX$,~${\mu=[\aa_1\cdots \aa_7]_{_L}\in\NF(\XX)}$, we have~$\deg(\mu)=\sum_{i=1}^{7}\deg(\aa_i)$, $\ell(\mu)=7 $ and
$\mathsf{wt}(\mu)=(\deg(\mu), 7, \aa_1, \aa_2,\dots, \aa_7)$. We define an order on~$\NF(\XX)$ as follows.
\begin{defi}\label{order}
For all $\mu, \nu\in \NF(\XX)$,
$$\mu<\nu\ \mbox{ if and only if } \mathsf{wt}(\mu)<\mathsf{wt}(\nu)\ \ \mbox{lexicogaphically.}$$
Such an order is called a \emph{deg-length-lex} order (degree-length-lexicographic order).
\end{defi}

In what follows, we use this order on $\NF(\XX)$. It is clear that, for each~$\aa\in \XX$ and for each~$\nu\in \NF(\XX)$, if~$\nu<\aa$ holds, then we have
either~$\deg(\nu)<\deg(\aa) $ or $\deg(\aa)=\deg(\nu)$ with~$ \nu\in \XX$ and~$ \nu<\aa$.
 Moreover, it is not difficult to prove that a deg-length-lex order is a well order on~$\NF(\XX)$.
For each nonzero polynomial~$\ff=\sum_{i=1}^{n} \alpha_{i}\mu_i $ in~$\Lbs(\XX)$, where each~$0\neq \alpha_i$ lies in~$k$, each~$\mu_i$ lies in~$\NF(X)$ and~$\mu_n<\cdots<\mu_2<\mu_1$,
we call~$\mu_1$ the \emph{leading monomial} of~$\ff$, denoted by~$\bar\ff$, and call  $\alpha_1$ the coefficient of~$\bar\ff$, denoted by~$lc(f)$. We call~$\ff$ a \emph{monic} polynomial if~$lc(f)=1$. A nonempty subset~$S$ of~$\Lbs(X)$ is called a \emph{monic} set, if every element in $S$ is monic. In addition, we denote
$$\supp(f):=\{\mu_1,\dots,\mu_n\}.$$
 For convenience, we define~$\bar{0}=0$, $\deg(0)=0$, $\ell(0)=0$. Then $0<\mu$ for each $\mu\in \NF(X)$.

Recall that a polynomial $f=\sum\alpha_i \mu_i\in\Lbs(\XX)$ is  called \emph{homogeneous},  if  $|\mu_i|=0$ for all $i$, or $|\mu_i|=1$ for all $i$. If this is the case, then we denote~$|f|=0$ or~$|f|=1$.
A nonempty set $S\subseteq\Lbs(\XX)$ is called \emph{homogeneous} if every~$f$ in~$\SS$ is homogeneous.

From now on, we always assume that~$S$ is a monic homogeneous set.

The following claim does not offer an explicit formula for the product of two elements in~$\Lbs(\XX)$, but
it describes the content of the resulted product.

\begin{lemm}\label{multiplication}
(i) Assume that~$\gg$ is a polynomial and~$\ff_1,...,\ff_n$ are homogeneous polynomials in $\Lbs(\XX)$.
Then we  have $$(\gg[\ff_1\cdots\ff_n]_{_L})=\sum \alpha_{i}[\gg\xx_{i_1}\cdots\xx_{i_{n}}]_{_L},$$  where each $\alpha_i\in \kk$ and each~$(\xx_{i_1},\ldots,\xx_{i_{n}})$ is a permutation of~$(\ff_1,\ldots,\ff_n)$.
In particular, if~$\bb,\aa_1,...,\aa_n\in\XX$, then
$(\bb[\aa_1\cdots\aa_n]_{_L})=\sum\alpha_{i}[\bb\xx_{i_1}\cdots\xx_{i_{n}}]_{_L}$, where each ~$\alpha_i\in \kk$ and each~$(\xx_{i_1},\ldots,\xx_{i_{n}})$ is a permutation of~$(\aa_1,\ldots,\aa_n)$.

(ii) Let $\mu,\nu$ lie in $\NF(\XX)$. If $(\mu\nu)$ is not zero, then for each $\tau\in\supp((\mu\nu))$, we have $\deg(\tau)=\deg(\mu)+\deg(\nu)$ and $\ell(\tau)=\ell(\mu)+\ell(\nu)$.
\end{lemm}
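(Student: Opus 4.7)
The plan is to prove (i) by induction on $n$ using the Leibniz superidentity, and then to obtain (ii) as an immediate consequence of (i). For (i), the base case $n=1$ is trivial since $(gf_1)=[gf_1]_{_L}$. For $n\ge 2$, rewrite $[f_1\cdots f_n]_{_L} = ([f_1\cdots f_{n-1}]_{_L}f_n)$ and apply the Leibniz superidentity with $x=g$, $y=[f_1\cdots f_{n-1}]_{_L}$, $z=f_n$ to obtain
\[ (g[f_1\cdots f_n]_{_L}) = ((g[f_1\cdots f_{n-1}]_{_L})f_n) - (-1)^{\varepsilon}((gf_n)[f_1\cdots f_{n-1}]_{_L}), \]
where $\varepsilon = |[f_1\cdots f_{n-1}]_{_L}||f_n|$. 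This exponent is meaningful precisely because the hypothesis that each $f_i$ is homogeneous forces $[f_1\cdots f_{n-1}]_{_L}$ to be homogeneous. I would then apply the induction hypothesis to $(g[f_1\cdots f_{n-1}]_{_L})$ to rewrite the first summand as a $k$-linear combination of $[gx_{i_1}\cdots x_{i_{n-1}}f_n]_{_L}$, and apply it to $((gf_n)[f_1\cdots f_{n-1}]_{_L})$, with $(gf_n)$ playing the role of $g$, to rewrite the second summand as a $k$-linear combination of $[gf_n y_{j_1}\cdots y_{j_{n-1}}]_{_L}$. In both cases the trailing $n$-tuple is a permutation of $(f_1,\ldots,f_n)$, establishing the general formula. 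The special case for $b,a_1,\ldots,a_n\in X$ is just the specialization $g=b$, $f_i=a_i$.

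For (ii), write $\mu=[a_1\cdots a_k]_{_L}$ and $\nu=[b_1\cdots b_m]_{_L}$ with $a_i,b_j\in X$, so $k=\ell(\mu)$ and $m=\ell(\nu)$. Since every generator is homogeneous, (i) applies with $g=\mu$ and $f_j=b_j$, yielding
\[ (\mu\nu) = \sum_i \alpha_i [\mu b_{\sigma_i(1)}\cdots b_{\sigma_i(m)}]_{_L} = \sum_i \alpha_i [a_1\cdots a_k b_{\sigma_i(1)}\cdots b_{\sigma_i(m)}]_{_L}, \]
where each summand on the right already lies in $\NF(X)$, has length $k+m=\ell(\mu)+\ell(\nu)$, and has degree $\deg(\mu)+\deg(\nu)$. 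Collecting like terms can only cancel some of these, so every $\tau\in\supp((\mu\nu))$ retains the claimed length and degree.

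The only delicate point is the sign bookkeeping in the inductive step of (i): the exponent $\varepsilon$ involves the parity of $[f_1\cdots f_{n-1}]_{_L}$, which is only defined because each $f_i$ is assumed homogeneous, and this is precisely why the homogeneity hypothesis is essential in the statement. Apart from this, the argument is a routine iteration of the Leibniz superidentity together with the identity $([u]_{_L}v)=[uv]_{_L}$ for left-normed brackets, and it invokes nothing beyond Theorem \ref{basisnf}.
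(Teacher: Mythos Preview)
Your proof is correct and follows essentially the same approach as the paper: induction on $n$ using the Leibniz superidentity to split $(g[f_1\cdots f_n]_{_L})$ into two terms, each handled by the induction hypothesis, and then deducing (ii) directly from (i). Your write-up is in fact more explicit than the paper's, particularly in spelling out why the homogeneity hypothesis is needed for the sign and in unpacking the details of (ii).
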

\begin{proof}
We prove (i) by using induction on~$n$. If~$n=1$, then there is nothing to prove. If~$n>1$, then
we get
$$(\gg[\ff_1\cdots\ff_n]_{_L})
= ((\gg[\ff_1\cdots\ff_{n-1}]_{_L})\ff_n)
-(-1)^{|\ff_n||[\ff_1\cdots\ff_{n-1}]_{_L}|}
((\gg\ff_n)[\ff_1\cdots\ff_{n-1}]_{_L}).$$
By induction hypothesis, we have
$$(\gg[\ff_1\cdots\ff_{n-1}]_{_L})=\sum \alpha'_{i}[\gg\xx'_{i_1}\cdots\xx'_{i_{n-1}}]_{_L},\ ((\gg\ff_n)[\ff_1\cdots\ff_{n-1}]_{_L})=\sum \alpha'_{i}[\gg\ff_n\xx'_{i_1}\cdots\xx'_{i_{n-1}}]_{_L},$$
 where each $\alpha'_i\in \kk,$  and each~$(\xx'_{i_1},\ldots,\xx'_{i_{n-1}})$ is a permutation of $(\ff_1,\ldots,\ff_{n-1})$.
Therefore, the result holds.

(ii)  follows by (i).
\end{proof}

\begin{lemm}\label{remark}
Let~$\mu, \nu, \tau$ lie in $\NF(\XX)$ and $\nu<\mu$. Then

\item[(i)]  $\overline{(\nu\tau)}<\overline{(\mu\tau)}$, if $(\mu\tau)$ is not zero;
\item[(ii)]  $\ov{(\tau\nu)}<\ov{(\tau\mu)}$, if $\mu$ lies in $\XX$;
\item[(iii)]   for each nonzero polynomial~$\ff$ in $\Lbs(\XX)$,~$\overline{[\ff\aa_1\cdots\aa_n]_{_L}}=[\bar{\ff}\aa_1\cdots\aa_n]_{_L}$, where~$\aa_1,...,\aa_n\in\XX$.

\end{lemm}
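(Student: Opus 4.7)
The plan is to prove parts (i) and (ii) by directly comparing the three components of $\wt(\nu)$ and $\wt(\mu)$, using Lemma \ref{multiplication} to control the degree, length, and prefix letters of every nonzero monomial appearing in $(\mu\tau)$, $(\nu\tau)$, $(\tau\mu)$, $(\tau\nu)$; then part (iii) will follow from (i) by induction on $n$.

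For (i), I would split the hypothesis $\nu<\mu$ into three cases according to the first coordinate at which $\wt(\nu)$ and $\wt(\mu)$ differ. If $\deg(\nu)<\deg(\mu)$, then Lemma \ref{multiplication}(ii) gives that every monomial in $\supp((\nu\tau))$ has degree $\deg(\nu)+\deg(\tau)$, strictly less than the common degree $\deg(\mu)+\deg(\tau)$ of every monomial in $\supp((\mu\tau))$; hence $\ov{(\nu\tau)}<\ov{(\mu\tau)}$. If $\deg(\nu)=\deg(\mu)$ but $\ell(\nu)<\ell(\mu)$, the same lemma decides the comparison at the length coordinate. Otherwise $\deg(\nu)=\deg(\mu)$ and $\ell(\nu)=\ell(\mu)=n$; writing $\mu=[a_1\cdots a_n]_{_L}$, $\nu=[b_1\cdots b_n]_{_L}$, $\tau=[c_1\cdots c_k]_{_L}$ with $(b_1,\ldots,b_n)<(a_1,\ldots,a_n)$ lex, Lemma \ref{multiplication}(i) shows that every basis monomial in $\supp((\mu\tau))$ begins with the prefix $(a_1,\ldots,a_n)$ and every basis monomial in $\supp((\nu\tau))$ begins with $(b_1,\ldots,b_n)$. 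Since the two sides have equal degree and equal length $n+k$, the lex comparison of $\wt$ is decided at some letter position $\leq n$, yielding $\ov{(\nu\tau)}<\ov{(\mu\tau)}$.

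For (ii), the remark immediately after Definition \ref{order} says that $\nu<\mu$ with $\mu\in\XX$ forces either $\deg(\nu)<\deg(\mu)$, which reduces to the degree argument above, or $\nu\in\XX$ with $\nu<\mu$ in the underlying order on $\XX$; in the second case $(\tau\nu)=[\tau\nu]_{_L}$ and $(\tau\mu)=[\tau\mu]_{_L}$ are single left-normed basis elements of equal degree and length that differ only in the last letter, so the inequality is immediate from the definition of $\wt$. For (iii), I would argue by induction on $n$. In the base case $n=1$, writing $\ff=\sum_i\alpha_i\mu_i$ with $\ov\ff=\mu_1>\mu_2>\cdots$, the product $(\ff a_1)=\sum_i\alpha_i[\mu_i a_1]_{_L}$ is a linear combination of pairwise distinct $\NF(\XX)$-basis elements (no cancellation, since the $\mu_i$ are distinct), and (i) applied with $\tau=a_1\in\XX$ gives $[\mu_i a_1]_{_L}<[\mu_1 a_1]_{_L}=[\ov\ff\,a_1]_{_L}$ for every $i\geq 2$, proving the claim. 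The inductive step is obtained by rewriting $[\ff a_1\cdots a_n]_{_L}=([\ff a_1\cdots a_{n-1}]_{_L}\,a_n)$ and invoking the inductive hypothesis together with the $n=1$ case.

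The main obstacle is the last case of (i): in the super setting the formal expansion supplied by Lemma \ref{multiplication}(i) may contain coinciding or sign-cancelling terms, so one must verify that after collecting like terms every surviving basis monomial in $(\mu\tau)$ still begins with the full prefix $(a_1,\ldots,a_n)$, and likewise for $(\nu\tau)$ with $(b_1,\ldots,b_n)$, so that the lex comparison of $\wt$ is genuinely decided within the first $n$ letter positions rather than inside the permuted suffix. The remaining cases are routine bookkeeping with degree and length.
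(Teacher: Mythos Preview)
Your proposal is correct and follows essentially the same three-case split as the paper's proof. The ``obstacle'' you flag in the last case of (i) is not a genuine difficulty: every term in the formal expansion of $(\mu\tau)$ supplied by Lemma~\ref{multiplication}(i) already carries the full prefix $a_1\cdots a_n$, so any surviving basis monomial after collecting like terms automatically does too; no further verification is needed. The paper in fact observes a slightly sharper fact---since the coefficients $\alpha_i$ in the expansion $(g[d_1\cdots d_t]_{_L})=\sum\alpha_i[g\,x_{i_1}\cdots x_{i_t}]_{_L}$ depend only on $\tau$ and not on $g$, the leading monomials $\overline{(\mu\tau)}$ and $\overline{(\nu\tau)}$ share the \emph{same} suffix permutation---but this extra information is not required for the inequality, and your prefix argument suffices.
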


 \begin{proof}
\ITEM1  Suppose $\mu=[b_1\cdots b_m]_{_L}, \nu=[c_1\cdots c_p]_{_L},\tau=[d_1\cdots d_t]_{_L}$, where each ${b_i, c_j, d_l\in X}$, and $(\mu\tau)\neq 0$.
If $(\nu\tau)=0$, there is nothing to prove.
Suppose that $(\nu\tau)$ is not zero.
 If $\deg(\nu)<\deg(\mu)$, then by Lemma \ref{multiplication}, we get~$\deg({\overline{(\nu\tau)}})<\deg({\overline{(\mu\tau)}})$.
 If $\deg(\mu)=\deg(\nu)$ and~$p<m$, then by Lemma \ref{multiplication}, $\deg({\overline{(\nu\tau)}})=\deg({\overline{(\mu\tau)}})$, and~$\ell(\overline{(\nu\tau)})<\ell(\overline{(\mu\tau)})$.
 If $\deg(\mu)=\deg(\nu)$ and $m=p$, then there exists $i\in\{1,\dots,m\}$ such that $ b_j=c_j$ for all~$j<i$ and~$ c_i <b_i$. Thus according to Lemma~\ref{multiplication}, we get~$\overline{(\mu\tau)}=[b_1\cdots b_mx_{i_1}\cdots x_{i_t}]_{_L}$ and $\overline{(\nu\tau)}= [c_1\cdots c_mx_{i_1}\cdots x_{i_t}]_{_L}$, where~ $(x_{i_1},...,x_{i_t})$ is a permutation of $(d_1,...,d_t)$. This shows that $\overline{(\nu\tau)}<\overline{(\mu\tau)}$.

\ITEM2  If $(\tau\nu)=0$, then the result holds immediately.
Assume that $(\tau\nu)\neq 0$. For $\mu\in\XX$, if~$\deg(\nu) <\deg(\mu)$, then we have $$\deg(\ov{(\tau\nu)})=\deg(\tau)+\deg(\nu)<\deg(\tau)+\deg(\mu)=\deg(\ov{(\tau\mu)}).$$ Otherwise~$ \nu$ lies in~$\XX$ and~$ \nu<\mu$. Then we have $\ov{(\tau\nu)}=(\tau\nu)<(\tau\mu)=\ov{(\tau\mu)}$.

\ITEM3 This part follows from (i).
 \end{proof}

Note that, in general, for $\mu, \nu, \tau \in \NF(\XX)$ and $\nu<\mu$, we can not get~$\overline{(\tau\nu)}<\overline{(\tau\mu)}$ even if $(\tau\mu)\neq0$. For instance, assume~$\tau=\aa_1,$ $\mu=(\aa_2\aa_1),$ $\nu=(\aa_1\aa_3),$ ${\deg(\aa_1)=\deg(\aa_2)=\deg(\aa_3)}$ and $\aa_1<\aa_2<\aa_3$. Then we get~$\nu<\mu$ and $\overline{(\tau\mu)}=((\aa_1\aa_2)\aa_1)<\overline{(\tau\nu)}=((\aa_1\aa_3)\aa_1).$

\begin{defi}
Let  $S$ be a monic homogeneous subset of $\Lbs(\XX)$.  We define \emph{normal $S$-polynomials} (with respect to $<$) inductively as follows: for each~$s\in S,\ \aa_1\wdots\aa_m\in \XX$, $m\geq 0$,
\begin{enumerate}

\item[(i)] the polynomial~$[s\aa_1\cdots\aa_m]_{_L} $ is a normal~$\SS$-polynomial;

\item[(ii)] the polynomial~$[\aa_1\cdots\aa_t s\aa_{t+1}\cdots\aa_m]_{_L}$ is a normal~$\SS$-polynomial if~$\ell(\bar s)=1$ and $1\leq t\leq m$.
\end{enumerate}
\end{defi}
  We usually use the notation~$h_s$ for a normal~$S$-polynomial.
\begin{rema}\label{hsa}
  For each normal $S$-polynomial $h_s$, $[h_s\bb_1\cdots\bb_n]_{_L}$ is also a normal $S$-polynomial, where~$\bb_1,\ldots,\bb_n\in\XX, n\geq 0$.
\end{rema}
The leading monomial of a normal~$\SS$-polynomial is always obvious, by Lemma \ref{remark}.
\begin{lemm}\label{hshs}
For each normal $S$-polynomial $h_{s}$, $s\in S$, $\aa_1, \dots, \aa_m\in X$, $m\geq 0$,

\ITEM1 if~$h_{s}=[s\aa_1\cdots\aa_m]_{_L}$, then we have~$\overline{h_{s}}=[\bar{s}\aa_1\cdots\aa_m]_{_L}$;

\ITEM2  if~$h_{s}=[\aa_1\cdots\aa_t s\aa_{t+1}\cdots\aa_m]_{_L}$ where~$\ell(\bar s)=1$, then we get~$\overline{h_{s}}=[\aa_1\cdots\aa_t \bar{s}\aa_{t+1}\cdots\aa_m]_{_L}$;

\ITEM3  $\ov{s}\leq \ov{h_{s}}$ holds.
\end{lemm}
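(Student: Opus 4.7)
The plan is to prove the three claims in sequence. Claim (i) is essentially a special case of Lemma \ref{remark}(iii): applying that lemma with $f = s$ (which is monic, hence nonzero) immediately yields $\overline{[s a_1 \cdots a_m]_{_L}} = [\bar{s} a_1 \cdots a_m]_{_L}$, and the case $m = 0$ is trivial.

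For claim (ii), the key observation is that $\ell(\bar{s}) = 1$ forces $\bar{s} \in X$, which places us in the setting where Lemma \ref{remark}(ii) applies. I would set $u := [a_1 \cdots a_t]_{_L} \in \NF(X)$ and write $s = \bar{s} + \sum_i \alpha_i \nu_i$ with $\nu_i \in \NF(X)$ and $\nu_i < \bar{s}$ for each $i$. Since $\bar{s} \in X$, the product $(u \bar{s}) = [a_1 \cdots a_t \bar{s}]_{_L}$ is a single basis element of $\NF(X)$, and hence its own leading monomial. Lemma \ref{remark}(ii), applied with $\tau = u$, $\mu = \bar{s}$ and $\nu = \nu_i$, gives $\overline{(u \nu_i)} < (u \bar{s})$ for every $i$; thus $\overline{(us)} = [a_1 \cdots a_t \bar{s}]_{_L}$. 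A second application of Lemma \ref{remark}(iii), now to the polynomial $(us)$, gives
$$\overline{h_s} = \overline{[(us)\, a_{t+1} \cdots a_m]_{_L}} = [\,\overline{(us)}\, a_{t+1} \cdots a_m]_{_L} = [a_1 \cdots a_t \bar{s} a_{t+1} \cdots a_m]_{_L}.$$

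For claim (iii), I would combine (i) and (ii) with the positivity of $\deg: X \to \mathbb{Z}_{>0}$. In case (i), either $m = 0$ and $\overline{h_s} = \bar{s}$, or $m \geq 1$ and $\deg(\overline{h_s}) = \deg(\bar{s}) + \sum_{i=1}^{m} \deg(a_i) > \deg(\bar{s})$. In case (ii), $t \geq 1$ forces the same strict inequality on degrees. By the deg-length-lex order of Definition \ref{order}, a strictly larger degree component already implies a strictly larger weight, so $\bar{s} \leq \overline{h_s}$ in every case. I do not anticipate any serious obstacle here: the whole argument is essentially a matter of wiring together the correct invocations of Lemma \ref{remark}, the only subtle point being the use of $\ell(\bar{s}) = 1$ to stay within the hypothesis of Lemma \ref{remark}(ii).
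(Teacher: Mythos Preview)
Your proposal is correct and follows exactly the route the paper intends: the paper does not give a separate proof of this lemma but simply remarks beforehand that ``the leading monomial of a normal $S$-polynomial is always obvious, by Lemma~\ref{remark},'' and your argument is precisely the unpacking of that remark via parts (ii) and (iii) of Lemma~\ref{remark}. The only thing you add beyond the paper is the explicit degree comparison for part~(iii), which is straightforward.
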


The following lemma  shows that the set
$$
\Irr(S):=\{\mu\in \NF(\XX)\mid \mu\neq \ov{\hh_s} \mbox{ for any normal } S\mbox{-polynomial } \hh_s\},
$$
is a linear generating set of the presented Leibniz superalgebra~$\Lbs(\XX|\SS):=\Lbs(\XX)/\operatorname{\Id}(S)$.
\begin{lemm}\label{f=Irr+n-s-polynomials}
Let $\SS$ be a monic homogeneous subset of~$\Lbs(\XX)$. Then we have
$$\ff=\sum\alpha_i\mu_i+
\sum\beta_jh_{j,s_j},\ \ \mbox{for each }\ff\in \Lbs(\XX),
$$
where each $\alpha_i,\beta_j\in k$, $ \mu_i\in \Irr(\SS)$ with $\mu_i\leq \bar f$ and $\hh_{j,s_j}$ is
a normal $\SS$-polynomial with $\ov{h_{j,s_j}}\leq\bar \ff$.  In particular, it follows that   $\Irr(\SS)$ is a set of linear generators of the Leibniz
superalgebra $ \Lbs(\XX|\SS)  $.
\end{lemm}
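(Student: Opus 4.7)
The plan is to proceed by transfinite induction on the leading monomial $\bar f$ with respect to the deg-length-lex well-order on $\NF(\XX)$, using the convention $\bar 0 = 0$ with $0 < \mu$ for every $\mu \in \NF(\XX)$. The base case $f = 0$ is immediate by taking both sums empty.

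For the inductive step, write $\alpha = lc(f)$ and distinguish two cases according to whether $\bar f$ is irreducible. If $\bar f \in \Irr(S)$, set $f' = f - \alpha \bar f$; the leading term cancels, so $\bar{f'} < \bar f$, and the inductive hypothesis decomposes $f'$. Appending the single irreducible summand $\alpha \bar f$ (which is monic in $\Irr(S)$ and satisfies $\bar f \leq \bar f$) produces the required decomposition of $f$. If $\bar f \notin \Irr(S)$, the very definition of $\Irr(S)$ supplies a normal $S$-polynomial $h_s$ with $\overline{h_s} = \bar f$. Because $s$ is monic and, by Lemma \ref{hshs} together with Lemmas \ref{multiplication} and \ref{remark}, right-multiplying by elements of $\XX$ in the left-normed bracket (respectively, the admissible insertion permitted when $\ell(\bar s) = 1$) preserves the coefficient $1$ of the top term, $h_s$ itself is monic as a polynomial in $\Lbs(\XX)$. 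Hence $f' = f - \alpha h_s$ satisfies $\bar{f'} < \bar f$, the inductive hypothesis applies to $f'$, and adjoining the normal $S$-polynomial $\alpha h_s$ (whose leading monomial equals $\bar f$) completes the decomposition.

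For the concluding assertion, note that every normal $S$-polynomial lies in $\Id(S)$: starting from $s \in S \subseteq \Id(S)$, repeatedly bracketing with elements of $\XX$ on the right (and, in the $\ell(\bar s) = 1$ case, the admissible insertion on the left) keeps us inside $\Id(S)$, since an ideal is closed under both left and right multiplication by elements of $\Lbs(\XX)$. Reducing the decomposition of $f$ modulo $\Id(S)$ therefore annihilates the $h_{j,s_j}$-contributions, so $f + \Id(S) = \sum \alpha_i (\mu_i + \Id(S))$, proving that $\Irr(S)$ is a $k$-linear generating set for $\Lbs(\XX \mid S)$. The main point requiring care is verifying that $h_s$ is monic in the second case, so that the subtraction $f - \alpha h_s$ really cancels the leading term; once this is in hand the induction runs without obstruction because $<$ is a well-order on $\NF(\XX)$.
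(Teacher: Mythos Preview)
Your proof is correct and follows exactly the approach the paper intends: the paper's proof consists of the single sentence ``The result follows by induction on $\bar f$,'' and you have simply spelled out that induction in full, including the case split on whether $\bar f\in\Irr(S)$ and the verification that normal $S$-polynomials are monic (which the paper leaves implicit via Lemma~\ref{hshs}). Nothing substantively different is happening here.
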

\begin{proof} The result follows by induction on $\bar \ff $.
\end{proof}

We are now able to introduce a series of conditions that under which the set~$\Irr(\SS)$ is a linear basis of~$\Lbs(\XX|\SS)$. We shall need  the notation of  composition.  Before going there, we introduce notation below:
for any $f,g\in\Lbs(\XX),\ \mu\in\NF(X)$, $n\in \mathbb{Z}_{>0}$, we define
$$f\equiv g\mod(S,\mu)\ \ \ (\mbox{resp.}\ f \equiv g  \mod(S,n)),$$ if $f- g$ can be written as a linear combination of normal~$S$-polynomials such that their leading monomials $<\mu$ (resp. the degrees of their leading monomials $\leq n$).
Furthermore, a polynomial~$h$ in~$\Lbs(X)$ is said to be \emph{trivial modulo $(S,\mu)$ (resp. $(S,n)$)}, if
$$\hh \equiv0  \mod(S,\mu)\ \ \ (\mbox{resp.}\ \hh \equiv0  \mod(S,n)). $$

\begin{defi}
Let $S$ be a monic homogeneous subset of~$ \Lbs(\XX)$. For all $f,g\in S$, $f\neq g$, we define \emph{compositions} as follows:
\begin{enumerate}
\item[(i)] If $\bar{\ff}=\overline{h_{\gg}}$ for some  normal $S$-polynomial~$h_\gg$, then we  call
 $$(\ff, \gg)_{\bar{\ff}}=\ff-h_\gg$$
 an \emph{inclusion composition} of $\SS$.

\item[(ii)] For each $\mu\in \NF(\XX)$, if $\ell(\bar{f})>1$ and $(\mu\ff)\neq 0$, then we call $(\mu\ff)$ a \emph{left multiplication composition} of $\SS$.
\end{enumerate}

The set $S$ is said to be \emph{closed} under the left multiplication compositions if every left multiplication composition $(\mu\ff)$ of $S$ is trivial modulo $(S,\deg(\mu)+\deg(\bar{\ff}))$.

 The set $S$ is called a \emph{\gsb}  in $\Lbs(\XX)$ if every left multiplication composition~$(\mu\ff)$ of~$S$ is trivial modulo~$(S,\deg(\mu)+\deg(\bar{\ff}))$ and every inclusion composition~$(\ff, \gg)_{\bar{\ff}}$ of~$S$ is trivial modulo $(S,\bar\ff)$.
\end{defi}

We shall see that, if a monic homogeneous set~$S$ is closed under left multiplication compositions, then  the elements of the graded ideal~$\Id(S)$ of~$\Lbs(\XX)$ can be written as linear combinations of normal $S$-polynomials.

\begin{lemm}\label{sum-of-normal-s-polynomial}
Let $S$  be a  monic homogeneous subset of $\Lbs(\XX)$ that is closed under left multiplication compositions.
Then
$[\bb_1\cdots\bb_m [\aa_1\cdots\aa_t s\aa_{t+1}\cdots\aa_n]_{_L}\cc_1\cdots\cc_r]_{_L}=\sum \alpha_i h_{i,s_i}$, where each $\alpha_i\in\kk$; $s\in S$; $\aa_1, \dots, \aa_n, \bb_1, \dots, \bb_m, \cc_1, \dots, \cc_r\in \XX$; $m,n,r\in \mathbb{Z}_{\geq0}$;~$ 0\leq t\leq n$ and each $ h_{i,s_i}$ is a normal $S$-polynomial with~$\deg(\overline{h_{i,s_i}})\leq \deg(\overline{s})+ \sum_{p=1}^n\deg(\aa_p)+\sum_{i=1}^m\deg(\bb_i)+\sum_{j=1}^r\deg(\cc_j)$.
\end{lemm}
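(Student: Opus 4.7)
The plan is to express the given element as a sum of left-normed words of the form $[\bb_1 \cdots \bb_m \dd_1 \cdots \dd_{k-1} s \dd_k \cdots \dd_n \cc_1 \cdots \cc_r]_{_L}$, with $\dd_1, \dots, \dd_n$ a permutation of $\aa_1, \dots, \aa_n$, and then to handle each such word by one of two mechanisms: either recognise it as a normal $S$-polynomial outright, or factor out a left multiplication composition $(w\,s)$ and invoke the closure hypothesis. The two preparatory tools are Lemma \ref{multiplication}(i), which converts the ``inner bracket'' around $s$ into a left-normed sum, and Remark \ref{hsa}, which lets us append variables from $\XX$ on the right of any normal $S$-polynomial without leaving the class.

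First I would treat the case $m=0$ separately. There the given expression simplifies to $[\aa_1 \cdots \aa_t s \aa_{t+1} \cdots \aa_n \cc_1 \cdots \cc_r]_{_L}$, which is either already a normal $S$-polynomial (when $t = 0$, of type (i), or when $\ell(\bar{s})=1$, of type (ii)); otherwise $t\geq 1$ and $\ell(\bar s)>1$, in which case I rewrite it as $[(\mu s)\,\aa_{t+1}\cdots\aa_n \cc_1\cdots\cc_r]_{_L}$ with $\mu=[\aa_1\cdots\aa_t]_{_L}\in\NF(\XX)$. The closure hypothesis turns $(\mu s)$ into a sum of normal $S$-polynomials with $\deg(\overline{h_k})\leq\deg(\mu)+\deg(\bar s)$, and Remark \ref{hsa} converts each into a normal $S$-polynomial after appending the tail $\aa_{t+1}\cdots\aa_n\cc_1\cdots\cc_r$.

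For $m\geq 1$, I would set $u=[\bb_1\cdots\bb_m]_{_L}$ and $v=[\aa_1\cdots\aa_t s \aa_{t+1}\cdots\aa_n]_{_L}$, so that the target is $[(uv)\cc_1\cdots\cc_r]_{_L}$. Applying Lemma \ref{multiplication}(i) to $(uv)$ gives $\sum\alpha_i[u\,\xx_{i_1}\cdots\xx_{i_{n+1}}]_{_L}$, where each $(\xx_{i_1},\dots,\xx_{i_{n+1}})$ is a permutation of $(\aa_1,\dots,\aa_t,s,\aa_{t+1},\dots,\aa_n)$. In each summand $s$ occupies some position $k_i$ while the remaining $n$ entries are variables $\dd_1,\dots,\dd_n$ forming a permutation of $\aa_1,\dots,\aa_n$. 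Substituting $u$ and reattaching the $\cc$'s yields $[\bb_1\cdots\bb_m\dd_1\cdots\dd_{k_i-1}s\dd_{k_i}\cdots\dd_n\cc_1\cdots\cc_r]_{_L}$. If $\ell(\bar s)=1$ this is a normal $S$-polynomial of type (ii); otherwise I factor it as $[(ws)\dd_{k_i}\cdots\dd_n\cc_1\cdots\cc_r]_{_L}$ with $w=[\bb_1\cdots\bb_m\dd_1\cdots\dd_{k_i-1}]_{_L}\in\NF(\XX)$, apply the closure hypothesis to the left multiplication composition $(ws)$, and finish via Remark \ref{hsa}. The stated degree bound then reassembles: $\deg(w)+\deg(\bar s)+\sum_{j>k_i}\deg(\dd_j)+\sum_l\deg(\cc_l)$ equals $\deg(\bar s)+\sum_p\deg(\aa_p)+\sum_i\deg(\bb_i)+\sum_j\deg(\cc_j)$ since the $\dd_j$'s exhaust the $\aa_p$'s.

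The main obstacle is not conceptual but combinatorial bookkeeping, namely keeping the four sub-cases (governed by whether $t=0$ or $t\geq 1$, and whether $\ell(\bar s)=1$ or $\ell(\bar s)>1$) straight and verifying that the degree accounting is tight at every step. The crucial point enabling the whole argument is that Lemma \ref{multiplication}(i) allows us to pull $u$ to the left of a left-normed product without creating any deeply nested brackets around $s$, after which only the single left multiplication $(ws)$ need be controlled by the hypothesis.
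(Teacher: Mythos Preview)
Your proposal is correct and follows essentially the same approach as the paper: reduce to the expression without the tail $\cc_1\cdots\cc_r$ via Remark~\ref{hsa}, apply Lemma~\ref{multiplication}(i) to expand $[\bb_1\cdots\bb_m[\aa_1\cdots\aa_t s\aa_{t+1}\cdots\aa_n]_{_L}]_{_L}$ into left-normed terms $[\bb_1\cdots\bb_m\xx_{j_1}\cdots\xx_{j_{n+1}}]_{_L}$ with the $\xx$'s a permutation of $(\aa_1,\dots,\aa_n,s)$, and then dispatch each term according to whether $\ell(\bar s)=1$ (type~(ii) normal $S$-polynomial) or $\ell(\bar s)>1$ (invoke closure under left multiplication compositions on $(ws)$ and append the remaining letters). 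Your separate treatment of the case $m=0$ is not needed---the paper's argument absorbs it uniformly---but it does no harm and your degree bookkeeping is accurate.
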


\begin{proof}
 It is enough to show that $[\bb_1\cdots\bb_m [\aa_1\cdots\aa_t s \aa_{t+1}\cdots\aa_n]_{_L}]_{_L}$ can be written as a linear combination of normal $S$-polynomials with the desired property.

By Lemma \ref{multiplication} we have
 $$[\bb_1\cdots\bb_m [\aa_1\cdots\aa_t s \aa_{t+1}\cdots\aa_n]_{_L}]_{_L}=\sum \beta_j [\bb_1\cdots\bb_m \xx_{j_1}\cdots\xx_{j_{n+1}}]_{_L},$$ where each~$\beta_j\in\kk$ and each~$ (\xx_{j_1},\ldots,\xx_{j_{n+1}})$ is a permutation of $(\aa_1,\ldots,\aa_n, s)$.

   If $\ell(\bar{s})=1$, then the polynomial~$[\bb_1\cdots\bb_m \xx_{j_1}\cdots\xx_{j_{n+1}}]_{_L}$ is already a normal~$S$-polynomial with the desired property.

If $\ell(\bar{s})>1$, then the claim follows immediately because $S$ is closed under the left multiplication compositions.
\end{proof}
Before going further, we introduce some more notation. For a fixed set~$\XX$, we use~$\XX^*$ for the free monoid and~$\XX^+$ for the free semigroup generated by~$\XX$, that is~${\XX^* = \XX^+ \cup \{\varepsilon\}}$, where $\varepsilon$ is the empty word. For~$\uu_1=a_{1}\cdots a_{m},\ \uu_2=b_{1}\cdots b_{n}$ in~$\XX^*,\ f\in \Lbs(X)$,  where each~$a_{i}, b_j\in\XX$, we recall that
$$[u_1fu_2]_{_L}:
=[a_{1}\cdots a_{m} fb_1\cdots b_{n}]_{_L},\ \ \ \ \ \NF(X)=\{[u]_{_L}\mid u\in X^+\}.$$
For convenience, we denote that $[u_1fu_2]_{_L}:
=[ fu_2]_{_L}$ if ~$u_1=\varepsilon$; $[u_1fu_2]_{_L}:
=[ u_1f]_{_L}$ if ~$u_2=\varepsilon$.

\begin{lemm}\label{key-lemma}
Let $S$ be a Gr\"{o}bner--Shirshov basis in $\Lbs(\XX)$ and $h_{1,s_1}, h_{2,s_2}$ be two normal $S$-polynomials. If~$\overline{h_{1,s_1}}=\overline{h_{2,s_2}}$, then we get
$h_{1,s_1}-h_{2,s_2}\equiv0 \mod (S,\overline{h_{1,s_1}}).$
\end{lemm}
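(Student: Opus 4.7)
The plan is to proceed by structural case analysis: knowing by Lemma \ref{hshs} how each $\bar{s_i}$ sits inside the common leading monomial $\mu = \overline{h_{1,s_1}} = \overline{h_{2,s_2}}$, I classify the pair $(h_{1,s_1}, h_{2,s_2})$ according to whether the two slots occupied by $\bar{s_1}$ and $\bar{s_2}$ are nested inside one another or disjoint. Since both polynomials are monic with the same leading monomial, the difference $h_{1,s_1} - h_{2,s_2}$ automatically has leading monomial $< \mu$, so the real task is to rewrite it as an \emph{explicit} linear combination of normal $S$-polynomials with leading monomials $< \mu$.

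In the \emph{nested case} (both type (i) with one $\bar{s_i}$ a left-normed prefix of the other; or $h_{1,s_1}$ type (i) with $\ell(\bar{s_1})\geq 2$ and $\bar{s_2}$ of length $1$ occupying a letter position inside the prefix $\bar{s_1}$; or both polynomials having $\bar{s_1}$ and $\bar{s_2}$ at the same slot), $\bar{s_1}$ is the leading monomial of a normal $S$-polynomial $h_{s_2}$ built by embedding $s_2$ back into the surrounding letters of $\bar{s_1}$, producing an inclusion composition $(s_1,s_2)_{\bar{s_1}} = s_1 - h_{s_2}$. The Gr\"obner--Shirshov hypothesis gives $s_1 - h_{s_2} = \sum_l \gamma_l g_{l,t_l}$ with $\overline{g_{l,t_l}} < \bar{s_1}$, and the iterated left-normed identity $[[u]_{_L} v_1 \cdots v_r]_{_L} = [u v_1 \cdots v_r]_{_L}$ factors the difference as $h_{1,s_1} - h_{2,s_2} = [(s_1 - h_{s_2})\, a_1 \cdots a_r]_{_L} = \sum_l \gamma_l [g_{l,t_l}\, a_1 \cdots a_r]_{_L}$. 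By Remark \ref{hsa} and Lemma \ref{remark}(iii), each summand is a normal $S$-polynomial whose leading monomial lies below $\mu$ by a deg-length-lex weight comparison.

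The \emph{disjoint case} arises when $\bar{s_1}, \bar{s_2}$ occupy disjoint positions in $\mu$: either both are single letters at distinct slots, or one has length $\geq 2$ and the other (a single letter) sits strictly beyond the corresponding prefix. Writing $r_i := s_i - \bar{s_i}$ and introducing the auxiliary polynomial $h_3$ obtained by substituting $s_1$ and $s_2$ simultaneously for $\bar{s_1}$ and $\bar{s_2}$ at their respective positions in $\mu$, a short computation gives
\[
h_{1,s_1} - h_{2,s_2} = (h_{1,s_1} - h_3) + (h_3 - h_{2,s_2}) = -[\,\cdots s_1 \cdots r_2 \cdots\,]_{_L} + [\,\cdots r_1 \cdots s_2 \cdots\,]_{_L}.
\]
Expanding $r_i = \sum_j \lambda^{(i)}_j \nu^{(i)}_j$ in $\NF(X)$ with $\nu^{(i)}_j < \bar{s_i}$, the task reduces to showing that each expression of the form $[u\, s_i\, v\, \nu\, w]_{_L}$ (with $\nu$ in the middle) or $[\nu\, u\, s_i\, v]_{_L}$ (with $\nu$ at the front) is a linear combination of normal $S$-polynomials with leading monomial $< \mu$. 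Lemma \ref{multiplication}(i) rewrites products of the form $(X \cdot \nu)$ as sums of left-normed products over letter permutations of $\nu$, which combined with the iterated left-normed identity produces type (i) or type (ii) normal $S$-polynomials as summands.

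The main obstacle I anticipate is the weight comparison in this last step. The degree of each resulting leading monomial equals $\deg(\mu) - \deg(\bar{s_j}) + \deg(\nu)$, which is strictly smaller than $\deg(\mu)$ whenever $\deg(\nu) < \deg(\bar{s_j})$. The borderline situation $\deg(\nu) = \deg(\bar{s_j})$ is handled using the observation after Definition \ref{order}: because $\bar{s_j} \in X$ in this sub-case, equality of degrees forces $\nu \in X$, and then the lex comparison of letter sequences in the weight function---the first discrepancy being at the slot where $\nu$ replaces $\bar{s_j}$---delivers the required strict inequality.
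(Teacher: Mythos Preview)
Your case split and the disjoint-case rewriting via the auxiliary $h_3$ match the paper's argument (its cases (I)--(II)), and your handling of the nested situations where the slot is a genuine left-normed prefix (the paper's case (III)) is correct. But there is one sub-case you have not actually handled.

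Consider the situation where $\bar{s_1}=\bar{s_2}$ occupy the \emph{same non-initial} slot, i.e.\ $h_{1,s_1}=[u\,s_1\,v]_{_L}$ and $h_{2,s_2}=[u\,s_2\,v]_{_L}$ with $u\in X^+$ and $\ell(\bar{s_1})=\ell(\bar{s_2})=1$. Your factorization ``$h_{1,s_1}-h_{2,s_2}=[(s_1-h_{s_2})\,a_1\cdots a_r]_{_L}$'' does not apply here: the difference is $[u\,(s_1-s_2)\,v]_{_L}$, with a nonempty left prefix $u$. Writing the inclusion composition as $s_1-s_2=\sum_l\gamma_l g_{l,t_l}$ with $\overline{g_{l,t_l}}<\bar{s_1}$, you get $\sum_l\gamma_l[u\,g_{l,t_l}\,v]_{_L}$, but these summands are \emph{not} in general normal $S$-polynomials: a type (ii) polynomial requires $\ell(\bar t)=1$, and nothing prevents $\ell(\overline{g_{l,t_l}})>1$. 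Remark~\ref{hsa} is about right multiplication by letters, not about inserting a polynomial into a middle slot with a nonempty prefix. The paper treats this case separately (its case (IV)): when $\ell(\overline{g_{l,t_l}})>1$ one has $\deg(\overline{g_{l,t_l}})<\deg(\bar{s_1})$ (since $\bar{s_1}\in X$), and then Lemma~\ref{sum-of-normal-s-polynomial}---which uses closure under \emph{left multiplication} compositions---is needed to rewrite $[u\,g_{l,t_l}\,v]_{_L}$ as a combination of normal $S$-polynomials of smaller degree. Without invoking this lemma your argument is incomplete.

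A smaller point: in your disjoint-case weight comparison, the clause ``because $\bar{s_j}\in X$ in this sub-case'' is false for the front term $[\nu\,u\,s_i\,v]_{_L}$ when $\bar{s_1}$ has length $\geq 2$ (case (I)). The inequality $\overline{[\nu\,u\,s_i\,v]_{_L}}<\mu$ still holds, but you need the full deg-length-lex comparison (length may drop even when degree does not), not just the observation after Definition~\ref{order}.
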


\begin{proof}
Suppose $s_1=\ov{s_1}+\sum\beta_j [\uu_j]_{_L}$, $s_2=\ov{s_2}+\sum\gamma_{_l} [\vv_{_l}]_{_L}$, where each $[\uu_j]_{_L}, [\vv_{_l}]_{_L}\in \NF(X)$.

Note that $\overline{h_{1,s_1}}=[\overline{s_1}\uu ]_{_L}$, where $\uu \in\XX^*$, or $\overline{h_{1,s_1}}=[\vv\overline{s_1}\uu ]_{_L}$, where $\ell(\overline{s_1})=1,\ \uu,\vv \in\XX^*$.
Since $\overline{h_{1,s_1}}=\overline{h_{2,s_2}}$,
the following four situations should be considered.

\ITEM1 $h_{1,s_1}=[s_1\uu \ov{s_2}\vv ]_{_L}$, $h_{2,s_2}=[\ov{s_1}\uu s_2\vv ]_{_L}$, where $\ell(\overline{s_2})=1$, $\uu , \vv \in \XX^*$. Then  we get
\begin{align*}[s_1\uu \ov{s_2}\vv ]_{_L}-[\ov{s_1}\uu s_2\vv ]_{_L}
&=[s_1\uu (\ov{s_2}-s_2)\vv ]_{_L}+[(s_1-\ov{s_1})\uu s_2\vv ]_{_L}\\
&=-\sum\gamma_l [s_1\uu [\vv _{_l}]_{_L} \vv ]_{_L}+\sum\beta_j[[\uu_j]_{_L}\uu s_2\vv ]_{_L},
\end{align*}
where each $[[\uu_j]_{_L}\uu s_2\vv ]_{_L}=[\uu_j\uu s_2\vv ]_{_L}$ is a normal $S$-polynomial and it is straightforward to see that~$\overline{[\uu_j\uu s_2\vv ]_{_L}}<\overline{[\ov{s_1}\uu s_2\vv ]_{_L}}=[\ov{s_1}\uu \ov{s_2}\vv ]_{_L}=\overline{h_{1,s_1}}$.
Moreover, by Lemma~\ref{multiplication}, each ${[s_1\uu [\vv_{_l}]_{_L} \vv ]_{_L}=[[[s_1\uu]_{_L} [\vv_{_l}]_{_L}]_{_L} \vv ]_{_L}}$ can be written as $\sum\eta_p[s_1\uu \vv _{_{lp}}\vv ]_{_L}$, where each $\vv _{_{lp}}=x_{lp_{_1}}\cdots x_{lp_{_{n_l}}}\in\XX^+$ and $(x_{lp_{_1}},\dots, x_{lp_{_{n_l}}})$ is a permutation of $(x_{l_1},\dots, x_{l_{n_l}})$, if $[\vv_{_l}]_{_L}=[x_{l_1}\cdots x_{l_{n_l}}]_{_L}$. It is clear that each $[s_1\uu \vv _{_{lp}}\vv ]_{_L}$ is a normal $S$-polynomial. By Lemma \ref{remark}, we have $\overline{[s_1\uu \vv _{_{lp}}\vv ]_{_L}}=[\ov{s_1}\uu \vv _{_{lp}}\vv ]_{_L}<[\ov{s_1}\uu \ov{s_2}\vv ]_{_L}=\overline{h_{1,s_1}}$. This shows that $h_{1,s_1}-h_{2,s_2}\equiv0 \mod (S,\overline{h_{1,s_1}}).$

\ITEM2 $h_{1,s_1}=[\uu s_1\vv \ov{s_2}w]_{_L}$, $h_{2,s_2}=[\uu \ov{s_1}\vv s_2w]_{_L}$,
where $\ell(\ov{s_1})=\ell(\ov{s_2})=1$, $\uu , \vv , w\in \XX^*$.
Then the reasoning is similar to that of \ITEM1, and we get \begin{multline*}
h_{1,s_1}-h_{2,s_2}=[\uu s_{1}\vv \ov{s_2}w]_{_L}-[\uu \ov{s_1}\vv s_{2}w]_{_L}
=[\uu s_{1}\vv (\ov{s_2}-s_2)w]_{_L}+[\uu (s_1-\ov{s_1})\vv s_{2}w]_{_L}\\
=-\sum\gamma_l [\uu s_{1}\vv[\vv_{_l}]_{_L}w]_{_L}+\sum\beta_j[\uu [\uu _j]_{_L}\vv s_{2}w]_{_L}
\equiv 0\mod(S,\ov{h_{1,s_1}}).
\end{multline*}

\ITEM3 $h_{1,s_1}=[s_1\vv ]_{_L}$ and~$h_{2,s_2}=[s_2\uu \vv ]_{_L}$, or
 $h_{1,s_1}=[s_1\vv ]_{_L}$,~$h_{2,s_2}=[ws_2\uu \vv ]_{_L}$ and~$\ell(\overline{s_2})=1$, where $w, \uu , \vv$ belong to $\XX^*$.
 We only consider the case~$h_{2,s_2}=[s_2\uu \vv ]_{_L}$, since the other is in the similar situation.
 Then using the fact that $S$ is a Gr\"{o}bner--Shirshov basis, we may suppose that~$(s_1,s_2)_{\ov{s_1}}=s_1-[s_2\uu]_{_L}=\sum\alpha_i h_{i,s_i'}'$, where each $\alpha_i\in k,\ \overline{h_{i,s_i'}'}<\overline{s_1}$ and~$h_{i,s_i'}'$ is a normal $S$-polynomial.
    Then we get $$h_{1,s_1}-h_{2,s_2}=[(s_1,s_2)_{\ov{s_1}}\vv ]_{_L}=\sum\alpha_i [h_{i,s_i'}'\vv ]_{_L},$$
where each $[h_{i,s_i'}'\vv ]_{_L}$ is clearly a normal $S$-polynomial. Applying Lemma \ref{remark}, we get $\overline{[h_{i,s_i'}'\vv ]_{_L}}=[\overline{h_{i,s_i'}'}\vv ]_{_L}<[\overline{s_1}\vv ]_{_L}=\overline{h_{1,s_1}}$. Therefore, $h_{1,s_1}-h_{2,s_2}\equiv0 \mod (S,\overline{h_{1,s_1}}).$

\ITEM4 $h_{1,s_1}=[\uu s_1\vv ]_{_L}$, $h_{2,s_2}=[\uu s_2\vv ]_{_L}$, where $\ell(\overline{s_1})=\ell(\overline{s_2})=1$, $\uu , \vv \in \XX^*$. Then we assume
that~$(s_1,s_2)_{\ov{s_1}}=s_1-s_2=\sum\alpha_i h'_{i,s_i'}$, where each $\alpha_i\in k,\ \overline{h_{i,s_i'}'}<\overline{s_1}$ and~$h_{i,s_i'}'$ is a normal $S$-polynomial.
So we get $[\uu s_1\vv ]_{_L}-[\uu s_2\vv ]_{_L}=[\uu (s_1-s_2)\vv ]_{_L}=\sum\alpha_i [\uu h_{i,s_i'}'\vv ]_{_L}$.
If $\ell(\overline{h_{i,s_i'}'})=1$, then ~$\overline{h_{i,s_i'}'}=\ov{s_i'},\ \ell(\ov{s_i'})=1$ and $[\uu h_{i,s_i'}'\vv ]_{_L}$ is a normal $S$-polynomial such that~$\overline{[\uu h_{i,s_i'}'\vv ]_{_L}}=[\uu \overline{h_{i,s_i'}'}\vv ]_{_L}<[\uu \overline{s_1}\vv ]_{_L}=
\overline {h_{1,s_1}}$.
If $\ell(\overline{h_{i,s_i'}'})\neq1$, then noting~$\overline{h_{i,s_i'}'}<\ov{s_1}$ and $\ell(\overline{s_1})=1$, we get ~$\deg(\overline{h_{i,s_i'}'})<\deg(\ov{s_1})$. By Lemma \ref{sum-of-normal-s-polynomial}, we get~$[\uu h_{i,s_i'}'\vv ]_{_L}=\sum\beta_j h''_{j,s_j''}$, where each~$h''_{j,s_j''}$ is a normal $S$-polynomial and
$$\deg(\overline{h''_{j,s_j''}})\leq \deg([\uu ]_{_L})+\deg(\overline{h_{i,s_i'}'})+\deg([\vv ]_{_L})<\deg([\uu ]_{_L})+\deg(\overline{s_1})+\deg([\vv ]_{_L})=\deg(\overline {h_{1,s_1}}).$$ Therefore $\overline{h''_{j,s_j''}}<\overline {h_{1,s_1}}$ and so $h_{1,s_1}-h_{2,s_2}\equiv0 \mod (S,\overline{h_{1,s_1}})$.
\end{proof}

Now we are ready to prove the main result in this paper.

\begin{theorem}\label{cd-lemma for Lb}  (Composition-Diamond lemma for Leibniz superalgebras)
Let $\Lbs(\XX)$ be the free Leibniz superalgebra generated by a well-ordered set $X=X_0\cup X_1$ with the linear basis $\NF(\XX)$, $<$  a deg-length-lex order  on $\NF(\XX)$, $S\subseteq \Lbs(\XX)$  a nonempty monic homogeneous set and $\operatorname{\Id(S)}$  the graded ideal of $\Lbs(\XX)$ generated by $S$. Then the following statements are equivalent.

\ITEM1 The set~$S$ is a Gr\"{o}bner--Shirshov basis in $\Lbs(\XX)$.

\ITEM2 If $0\neq f\in \operatorname{\Id(S)}$, then $\overline {f}=\overline{h_s}$ for some normal~$S$-polynomial~$h_s$.

\ITEM3 The set $\Irr(S)=\{\tau\in \NF(\XX)\mid \tau\neq \overline{h_s} \mbox{ for any normal } S\mbox{-polynomial } h_s\}$ forms  a linear basis of the Leibniz superalgebra $\Lbs(\XX | S):=\Lbs(\XX)/\operatorname{\Id(S)}$.

\end{theorem}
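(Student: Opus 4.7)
The plan is to prove the cycle $\ITEM1\Rightarrow\ITEM2\Rightarrow\ITEM3\Rightarrow\ITEM1$, exploiting the earlier lemmas as the main engines.

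For $\ITEM1\Rightarrow\ITEM2$: given a nonzero $f\in\Id(S)$, I would first observe that under $\ITEM1$, combined with Lemmas \ref{multiplication} and \ref{sum-of-normal-s-polynomial}, every element of $\Id(S)$ is a finite $k$-linear combination of normal $S$-polynomials. Write $f=\sum\alpha_i h_{i,s_i}$ and choose a representation in which $\mu:=\max_i\overline{h_{i,s_i}}$ is minimal with respect to $<$. Group the summands with leading monomial exactly $\mu$; by Lemma \ref{key-lemma}, any two such normal $S$-polynomials differ by something $\equiv 0\mod(S,\mu)$, so all but one of them can be absorbed into lower-order terms. The surviving summand carries the coefficient $\sum_i\alpha_i$ with $\overline{h_{i,s_i}}=\mu$. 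If this sum is nonzero then $\bar f=\mu=\overline{h_{s}}$ for that normal $S$-polynomial, as wanted; otherwise the maximum leading monomial strictly decreases, contradicting minimality. Well-foundedness of $<$ on $\NF(\XX)$ guarantees termination.

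For $\ITEM2\Rightarrow\ITEM3$: Lemma \ref{f=Irr+n-s-polynomials} already shows $\Irr(S)$ spans $\Lbs(\XX\,|\,S)$. For linear independence, suppose $f=\sum\alpha_i\mu_i\in\Id(S)$ with distinct $\mu_i\in\Irr(S)$ and some $\alpha_i\neq 0$. Then $f\neq 0$ and $\bar f=\max_i\mu_i\in\Irr(S)$; but $\ITEM2$ forces $\bar f=\overline{h_s}$ for some normal $S$-polynomial $h_s$, contradicting $\bar f\in\Irr(S)$.

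For $\ITEM3\Rightarrow\ITEM1$: take any left multiplication composition $(\mu f)$ (resp.\ inclusion composition $(f,g)_{\bar f}$). It is homogeneous (because $S$ is homogeneous), lies in $\Id(S)$, and by Lemma \ref{f=Irr+n-s-polynomials} can be written as $\sum\alpha_i\mu_i+\sum\beta_j h_{j,s_j}$ with $\mu_i\in\Irr(S)$ and $\overline{h_{j,s_j}}\le\overline{(\mu f)}$ (resp.\ $\overline{h_{j,s_j}}\le\overline{(f,g)_{\bar f}}<\bar f$). Passing to $\Lbs(\XX\,|\,S)$, the image of the composition is zero, so $\sum\alpha_i\mu_i=0$ in the quotient, and $\ITEM3$ forces all $\alpha_i=0$. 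For the degree bound in the left-multiplication case, extract the homogeneous component of degree $\deg(\mu)+\deg(\bar f)$: since each normal $S$-polynomial is itself homogeneous, only those $h_{j,s_j}$ of that degree survive, which gives exactly the required inequality on $\deg(\overline{h_{j,s_j}})$. For the inclusion composition, the strict inequality $\overline{(f,g)_{\bar f}}<\bar f$ built into the definition gives $\overline{h_{j,s_j}}<\bar f$ directly.

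The main obstacle is the implication $\ITEM1\Rightarrow\ITEM2$: one must set up the minimization/termination argument so that Lemma \ref{key-lemma} genuinely applies to cancel summands with equal leading monomial, and one must verify that the initial expression of $f$ as a sum of normal $S$-polynomials is actually available — this is where closure of $S$ under left multiplication compositions (needed to use Lemma \ref{sum-of-normal-s-polynomial}) is crucial, and it is where the Leibniz setting diverges from the associative and non-associative cases. The remaining two implications are comparatively routine applications of Lemmas \ref{f=Irr+n-s-polynomials} and \ref{key-lemma}.
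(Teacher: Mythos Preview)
Your proof follows essentially the same cycle as the paper's, with the same key inputs (Lemmas \ref{sum-of-normal-s-polynomial}, \ref{key-lemma}, \ref{f=Irr+n-s-polynomials}) at the same places; the only stylistic difference is that you phrase $\ITEM1\Rightarrow\ITEM2$ as a minimality/contradiction argument while the paper uses transfinite induction on $\mu_1=\max_i\overline{h_{i,s_i}}$, which is of course equivalent.

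One step is misstated, however. In $\ITEM3\Rightarrow\ITEM1$ you write that ``each normal $S$-polynomial is itself homogeneous'' in degree and propose to extract the degree-$(\deg(\mu)+\deg(\bar f))$ component. But in this paper ``homogeneous'' refers only to $\mathbb{Z}_2$-parity, not to the $\deg$ grading; elements of $S$ (and hence normal $S$-polynomials) need not be $\deg$-homogeneous, so your extraction argument does not work. The fix is simpler than what you wrote: Lemma \ref{f=Irr+n-s-polynomials} already gives $\overline{h_{j,s_j}}\le\overline{(\mu f)}$, and since $<$ is deg-length-lex this yields $\deg(\overline{h_{j,s_j}})\le\deg(\overline{(\mu f)})\le\deg(\mu)+\deg(\bar f)$ directly (the last inequality because every monomial of $f$ has degree at most $\deg(\bar f)$, again by the form of the order). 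This is exactly how the paper handles it.
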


\begin{proof}
 \ITEM1 $\Rightarrow$ \ITEM2 By Lemma~\ref{sum-of-normal-s-polynomial}, for each  nonzero element~$f$ in~$\operatorname{\Id}(S)$, we may assume that~$ \ff=\sum_{i=1}^n\alpha_i\hh_{i,s_i}$,
  where each
$0\neq\alpha_i\in \kk$ and $\hh_{i,s_i}$ is a normal $\SS$-polynomial.
Define~$
\mu_i= \overline{\hh_{i,s_i}}$ and assume that~$\pdots\leq\MU_{l+1}<\MU_{_l}=\pdots=\MU_2=\MU_1.
$
 Use induction on~$\MU_1$.
 If~$\MU_1=\ov{\ff}$, then there is nothing to prove.
 If~$\ov{\ff}<\MU_1$, then $\sum_{i=1}^l\alpha_{i}=0$ and by Lemma \ref{key-lemma}
\begin{align*} \ff&=\sum_{i=1}^l\alpha_i\hh_{i,s_i}+\sum_{ i=l+1}^n\alpha_i\hh_{i,s_i}\\
  &=(\sum_{ i=1}^l\alpha_i)\hh_{1,s_1}
  -\sum_{ i=2}^l\alpha_i(\hh_{1,s_1}
  -   \hh_{i,s_i})+\sum_{ i=l+1}^n\alpha_i\hh_{i,s_i}
  =\sum_{j }\beta_{j}\hh_{j,s_j'}'+\sum_{ i=l+1}^n\alpha_i\hh_{i,s_i},
\end{align*}
 where each $\ov{\hh_{j,s_j'}'}<  \MU_1$.
 Claim \ITEM2 follows by induction hypothesis.

\ITEM2 $\Rightarrow$ \ITEM3  By Lemma \ref{f=Irr+n-s-polynomials},
$\Irr(\SS)$ generates
$\Lbs(\XX|\SS)$ as a vector space. Suppose that~$\sum_{i}\alpha_i\MU_i=0$ in $\Lbs(\XX|\SS)$,
where each $ \alpha_i\in \kk$, $\alpha_i\neq 0$, $\MU_i\in  \Irr(\SS) $ and $ \pdots<\MU_2<\MU_1$.
Then we have~$0\neq \sum_{i}\alpha_i\MU_i\in \operatorname{\Id}(\SS) $. Thus~$\overline{\sum_{i}\alpha_i\MU_i}=\MU_1\in \Irr(\SS) $,
 which contradicts with~\ITEM2.

\ITEM3 $ \Rightarrow$ \ITEM1  Note that every composition in $S$
   is an element of $\operatorname{\Id}(\SS)$.
   It is obvious that every inclusion composition $(f, g)_{\ov{f}}\equiv0 \mod(S,\bar{\ff})$.
   By Lemma~\ref{f=Irr+n-s-polynomials} and \ITEM3,
     for each $\mu\in\NF(X), f\in S$, we get~$(\mu f)=\sum\beta_jh_{j,s_j}$, where each $\beta_j\in\kk,\ h_{j,s_j}$ is a normal $S$-polynomial with $\ov{h_{j,s_j}}\leq\ov{(\mu \ff)}$. Note that~$\ov{h_{j,s_j}}\leq\ov{(\mu \ff)}$ and $(\mu\ff)\neq 0$ implies~$\deg(\ov{h_{j,s_j}})\leq\deg(\mu)+\deg(\bar{\ff})$. Then~$(\mu f)\equiv0  \ \mod(S,\deg(\mu)+\deg(\bar{\ff}))$. Thus \ITEM1 holds.
 \end{proof}

\begin{rema}
 Consider ordinary Leibniz algebra, which is a special case of Leibniz superalgebra, that is, Leibniz superalgebra generated by elements of parity~$0$.
We use the notation~$\mathsf{Lb}(X)$ for the free Leibniz algebra generated by~$X=X_0$.
If this is the case, we have $\Lbs(X)=\mathsf{Lb}(X)$. Thus every polynomial in $\mathsf{Lb}(X)$ is homogeneous, and every ideal in $\mathsf{Lb}(X)$ is a graded ideal.
Therefore, Theorem~\ref{cd-lemma for Lb} is also the Composition-Diamond lemma for (ordinary) Leibniz algebras.
\end{rema}
We now turn to the question on how to recognize whether two graded ideals of~$\Lbs(X)$ is the same or not.
We begin with  the notation of a minimal (resp. reduced) Gr\"{o}bner-Shirshov basis.

\begin{defi}
A Gr\"{o}bner-Shirshov basis $S$ in $\Lbs(X)$ is \emph{minimal (resp. reduced)} if for each $s\in S$, we have~$\overline{s}\in \Irr(S \backslash \{s\})$\ (resp. $\supp(s)\subseteq \Irr(S \backslash \{s\})$).

Suppose $I$ is a graded ideal of $\Lbs(X)$ and $I=\Id(S)$, where $S$ is a homogeneous set. If $S$ is a minimal (resp. reduced) Gr\"{o}bner-Shirshov basis in $\Lbs(X)$, then we also call $S$  a minimal (resp. reduced) Gr\"{o}bner-Shirshov basis for the graded ideal $I$ or for the quotient Leibniz superalgebra $\Lbs(X)/I=\Lbs(X|S)$.
\end{defi}

\begin{lemm}\label{lIrr}
Let $R$ and~$S$ be homogeneous monic subsets of~$\Lbs(\XX)$ such that~$\Irr(S)=\Irr(R)$. Then the following statements hold.
\begin{enumerate}
\item[(i)]
If~$\Id(S)=\Id(R)$, then $S$ is a \gsb\ in~$\Lbs(\XX)$ if and only if~$R$ is a \gsb\ in~$\Lbs(\XX)$.

\item[(ii)]If~$R\subseteq S$ and~$S$ is a \gsb\ in $\Lbs(\XX)$, then~$R$ is also a \gsb\ for~$\Id(S)$.
\end{enumerate}

\end{lemm}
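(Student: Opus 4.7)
The plan is to derive (i) directly from the Composition--Diamond Lemma (Theorem~\ref{cd-lemma for Lb}) and then reduce (ii) to (i) by establishing the missing ideal equality.

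For part (i), I would observe that by Theorem~\ref{cd-lemma for Lb}, a homogeneous monic set $T\subseteq \Lbs(\XX)$ is a Gr\"obner--Shirshov basis if and only if $\Irr(T)$ is a $\kk$-linear basis of the quotient Leibniz superalgebra $\Lbs(\XX)/\Id(T)$. Under the hypotheses of (i) we have simultaneously $\Irr(S)=\Irr(R)$ and $\Id(S)=\Id(R)$, so the two quotients $\Lbs(\XX)/\Id(S)$ and $\Lbs(\XX)/\Id(R)$ are literally the same superalgebra and the two candidate bases coincide. Hence the equivalence in (i) is immediate.

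For part (ii), the strategy is to show $\Id(R)=\Id(S)$ and then invoke (i). The inclusion $\Id(R)\subseteq\Id(S)$ is immediate from $R\subseteq S$. For the reverse inclusion, pick an arbitrary $s\in S$ and apply Lemma~\ref{f=Irr+n-s-polynomials} to $s$ with respect to the set $R$, obtaining a decomposition
\[
s=\sum_i\alpha_i\mu_i+\sum_j\beta_j h_{j,r_j},
\]
where each $\mu_i\in\Irr(R)=\Irr(S)$ and each $h_{j,r_j}$ is a normal $R$-polynomial (hence lies in $\Id(R)\subseteq\Id(S)$). Rearranging gives $\sum_i\alpha_i\mu_i=s-\sum_j\beta_j h_{j,r_j}\in\Id(S)$. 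Now use that $S$ is a Gr\"obner--Shirshov basis in $\Lbs(\XX)$: by Theorem~\ref{cd-lemma for Lb}, $\Irr(S)$ is a linear basis of $\Lbs(\XX)/\Id(S)$, so the images of the $\mu_i$ are $\kk$-linearly independent modulo $\Id(S)$. This forces all $\alpha_i=0$, whence $s=\sum_j\beta_j h_{j,r_j}\in\Id(R)$. Since $s\in S$ was arbitrary, $S\subseteq\Id(R)$, and therefore $\Id(S)\subseteq\Id(R)$.

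With $\Id(R)=\Id(S)$ in hand and $\Irr(R)=\Irr(S)$ by hypothesis, part~(i) applies and $R$ inherits the Gr\"obner--Shirshov basis property from $S$; combined with $\Id(R)=\Id(S)$, this is exactly the assertion that $R$ is a Gr\"obner--Shirshov basis for the graded ideal $\Id(S)$. I do not expect any genuine obstacle here: the only nontrivial maneuver is the linear-independence argument used to kill the $\mu_i$-part, and that is delivered wholesale by Theorem~\ref{cd-lemma for Lb} once Lemma~\ref{f=Irr+n-s-polynomials} has been applied to the elements of $S$.
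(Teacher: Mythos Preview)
Your proof is correct and follows the same overall plan as the paper: part~(i) is immediate from the Composition--Diamond Lemma, and part~(ii) is reduced to~(i) by establishing $\Id(R)=\Id(S)$. The only difference is in how the inclusion $\Id(S)\subseteq\Id(R)$ is obtained. The paper argues by induction on the leading monomial: for an arbitrary nonzero $f\in\Id(S)$, since $S$ is a \gsb\ one has $\overline{f}=\overline{h_s}$ for some normal $S$-polynomial $h_s$, hence $\overline{f}\notin\Irr(S)=\Irr(R)$, so $\overline{f}=\overline{g_r}$ for some normal $R$-polynomial $g_r$; subtracting $lc(f)g_r$ lowers the leading monomial while staying in $\Id(S)$, and induction finishes. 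You instead apply Lemma~\ref{f=Irr+n-s-polynomials} to each generator $s\in S$ relative to $R$ and kill the $\Irr(R)$-part using the linear independence from Theorem~\ref{cd-lemma for Lb}~(iii). Both maneuvers are equally short and rest on the same two ingredients; yours avoids an explicit induction at the cost of invoking Lemma~\ref{f=Irr+n-s-polynomials}, while the paper's version works directly with arbitrary elements of $\Id(S)$ rather than just generators.
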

\begin{proof}
\ITEM1 This part follows by Theorem \ref{cd-lemma for Lb}.

\ITEM2
For each~$f\in \Id(S)$, since $\Irr(R)=\Irr(S)$ and $S$ is a Gr\"{o}bner-Shirshov basis for~$\Id(S)$, we have, by Theorem \ref{cd-lemma for Lb}, $ \overline{f}=\overline{h_s}=\overline{g_r}$ for some normal $S$-polynomial~$h_s$ and for some normal~$R$-polynomial~$g_r$.  So we get~$f_1=f-lc(f)g_r\in \Id(S)$ and $\overline{f_1}<\overline{f}$. By induction on~$\overline{f}$, we  deduce  that~$f$ is a linear combination of normal $R$-polynomials, i.e., $f\in \Id(R)$. This shows that $\Id(S)= \Id(R)$. Now the result follows by \ITEM1.
\end{proof}

For  associative algebras and polynomial algebras, it is known that every ideal has a unique reduced Gr\"{o}bner-Shirshov basis.
For Leibniz (super)algebras, we have  a similar result.

For each subset~$S$ of~$\Lbs(X)$, we define
$$
\overline{S}:=\{\overline{s}\mid s\in S\}.
$$

\begin{theorem}\label{reduced}
There is a unique reduced Gr\"{o}bner-Shirshov basis for every  (graded) ideal of the free Leibniz (super)algebra~$\Lbs(\XX)$ generated by $X=\XX_0\cup\XX_1$.
\end{theorem}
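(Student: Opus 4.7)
My plan is to build a divisibility-style partial order on $\NF(X)$ and use it twice: once to extract a reduced Gröbner-Shirshov basis (existence), and once to see that any two such bases coincide (uniqueness). For $\nu, \mu \in \NF(X)$ write $\nu \sqsubseteq \mu$ to mean $\mu$ matches the leading monomial of a normal polynomial whose central datum has leading monomial $\nu$; explicitly, either $\mu = [\nu a_1 \cdots a_m]_{_L}$ for some $m \geq 0$ and $a_i \in X$, or $\ell(\nu) = 1$ and $\mu = [a_1 \cdots a_t \nu a_{t+1} \cdots a_m]_{_L}$ for some $1 \leq t \leq m$. Reflexivity is immediate. The main obstacle is transitivity: the length-$1$ restriction on the infix template interacts delicately with iterated substitution, so a case analysis must verify that every combined template still lands in one of the two admissible shapes; notably, stacking two infix forms is impossible because the outer form demands a core of length $1$, while the inner form produces a core of length $\geq 2$. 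Antisymmetry follows because each letter in $X$ has positive degree, so $\nu \sqsubseteq \mu$ with $\nu \neq \mu$ forces $\nu < \mu$ in the deg-length-lex order, which also makes $\sqsubseteq$ well-founded on $\NF(X)$.

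For existence, let $I$ be a graded ideal of $\Lbs(X)$, set $\overline{I} := \{\overline{f} : 0 \neq f \in I\}$, let $M$ denote the $\sqsubseteq$-minimal elements of $\overline{I}$, and for each $\mu \in M$ pick a monic $s_\mu \in I$ with $\overline{s_\mu} = \mu$. Put $S_0 := \{s_\mu : \mu \in M\}$. For any $0 \neq f \in I$, well-foundedness provides $\mu \in M$ with $\mu \sqsubseteq \overline{f}$, and Lemma \ref{hshs} lifts the template to a normal $s_\mu$-polynomial $h_{s_\mu}$ with $\overline{h_{s_\mu}} = \overline{f}$. Theorem \ref{cd-lemma for Lb} then identifies $S_0$ as a Gröbner-Shirshov basis for $I$, and it is minimal by the $\sqsubseteq$-minimality of the $\overline{s_\mu}$. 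Next I reduce tails, processing the $s \in S_0$ in increasing order of leading monomial: while there exists $\nu \in \supp(s) \setminus \{\overline{s}\}$ with $\nu = \overline{h_{s'}}$ for some $s' \in S_0 \setminus \{s\}$ and normal $s'$-polynomial $h_{s'}$, take the largest such $\nu$ and replace $s$ by $s - \alpha h_{s'}$, where $\alpha$ is the coefficient of $\nu$. Since $\nu < \overline{s}$ the leading monomial and leading coefficient of $s$ are preserved, and all newly introduced monomials are $< \nu$, so the largest reducible monomial strictly descends and the loop terminates by well-foundedness. Processing in order of leading monomial ensures $\Id(S_{\mathrm{red}}) = \Id(S_0) = I$ (each reduction only uses elements with strictly smaller leading monomial); together with $\overline{S_{\mathrm{red}}} = \overline{S_0}$ (hence $\Irr(S_{\mathrm{red}}) = \Irr(S_0)$), Lemma \ref{lIrr}(i) keeps $S_{\mathrm{red}}$ a Gröbner-Shirshov basis for $I$, and it is reduced by construction.

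For uniqueness, let $S_1$ and $S_2$ be reduced Gröbner-Shirshov bases for $I$; both are in particular minimal. Given $s_1 \in S_1 \subseteq I$, Theorem \ref{cd-lemma for Lb} applied to $S_2$ yields $s_2 \in S_2$ with $\overline{s_2} \sqsubseteq \overline{s_1}$; applied to $S_1$ at $s_2$ it yields $t \in S_1$ with $\overline{t} \sqsubseteq \overline{s_2}$. Transitivity gives $\overline{t} \sqsubseteq \overline{s_1}$, minimality of $S_1$ forces $t = s_1$, and antisymmetry collapses $\overline{s_1} = \overline{t} \sqsubseteq \overline{s_2} \sqsubseteq \overline{s_1}$ to $\overline{s_1} = \overline{s_2}$. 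Hence $\overline{S_1} = \overline{S_2}$. For each $s_1 \in S_1$, let $s_2 \in S_2$ be the unique monic element with $\overline{s_2} = \overline{s_1}$, and set $d := s_1 - s_2 \in I$. If $d \neq 0$ then $\overline{d} < \overline{s_1}$ and $\overline{d} \in \supp(s_1) \cup \supp(s_2)$, so by reducedness $\overline{d} \in \Irr(S_1 \setminus \{s_1\})$ or $\overline{d} \in \Irr(S_2 \setminus \{s_2\})$; without loss of generality the former. Theorem \ref{cd-lemma for Lb} then gives $\overline{d} = \overline{h_r}$ for some $r \in S_1$, the $\Irr$ condition forces $r = s_1$, and Lemma \ref{hshs} gives $\overline{d} \geq \overline{s_1}$, contradicting $\overline{d} < \overline{s_1}$. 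Hence $d = 0$ and $S_1 = S_2$.
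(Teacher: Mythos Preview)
Your proof is correct. The route differs from the paper's mainly in that you make the divisibility relation $\sqsubseteq$ on $\NF(X)$ explicit and organize both halves of the argument around it, whereas the paper works throughout with the total deg-length-lex well-order. For existence the paper starts from the trivial Gr\"{o}bner--Shirshov basis $\{lc(f)^{-1}f : 0\neq f\in I_0\cup I_1\}$, prunes to one representative per leading monomial via Lemma~\ref{lIrr}, then removes non-minimal elements, then reduces tails; you jump straight to the $\sqsubseteq$-minimal leading monomials and verify the Gr\"{o}bner--Shirshov property directly from Theorem~\ref{cd-lemma for Lb}\,\ITEM2. For uniqueness the paper runs a transfinite induction on $\mu$, showing $S^{\mu}=R^{\mu}$ level by level; your argument first pins down $\overline{S_1}=\overline{S_2}$ by a clean minimality-plus-antisymmetry step, and then handles the tails in one shot via the difference $d=s_1-s_2$. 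Your packaging is arguably tidier, especially in the uniqueness half; the paper's version has the advantage of never needing to check transitivity of $\sqsubseteq$. One small omission: when you pick $s_\mu\in I$ with $\overline{s_\mu}=\mu$, you should take $s_\mu$ homogeneous (pass to the parity-$|\mu|$ component, which lies in $I$ since $I$ is graded), as the Gr\"{o}bner--Shirshov framework of Section~\ref{CDlemma-Lei} requires monic \emph{homogeneous} sets.
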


\begin{proof} Let~$I=I_0\oplus I_1$ be a graded ideal of~$\Lbs(\XX)$. We first show the existence. It is straightforward that~${\{lc(f)^{-1}f\mid 0\neq f\in I_0\cup I_1\}}$ is a Gr\"{o}bner-Shirshov basis for~$I$.
Now assume that~$\SS$ is a Gr\"{o}bner-Shirshov basis for~$I$. For each~$\mu\in \overline{S}$, we choose a polynomial $f_{\mu}$ in $S$ such that~${\overline{f_{\mu}}=\mu}$. Define
$
S_0=\{f_{\mu}\mid \mu\in \ov{S}\}.
$
Then it is clear that $ S\supseteq S_0$ and~$\Irr(S_0)=\Irr(S)$.
By Lemma \ref{lIrr}, $S_0$ is a Gr\"{o}bner--Shirshov basis for $I$.
For each~$g\in S_0$, we set
$
\bigtriangleup_g =\{f\in S_0 \mid f \neq g, \ov{f}\notin \Irr(\{ g\})\}
$. Let
$S_1 = S_0\setminus\bigcup_{g\in S_0}\bigtriangleup_g.
$
By Lemma \ref{lIrr}, $S_1$ is a minimal Gr\"{o}bner--Shirshov basis for $I$.
For each~$s\in S_1$, by Lemma \ref{f=Irr+n-s-polynomials}
we have~$s=s'+s''$, where $\supp(s')\in \Irr(\SS\setminus\{s\})$ and $s''\in\Id(\SS\setminus\{s\})$. Let
$
S_2=\{s'\mid s\in S_1\}.
$
It is easy to prove that $S_2$ is a reduced \gsb\ for $I$.

We now turn to the uniqueness.
Suppose that $R$ and~$S$ are two arbitrary reduced Gr\"{o}bner--Shirshov bases for $I$. Let~$s_0$ and~$  r_0 $ be the elements of~$S$ and~$R$ respectively such that~$\overline{s_0}=\min (\overline{S})$ and
$\overline{r_0}=\min (\overline{R})$. Since~$R$ is a \gsb, we get~$\overline{s_0}=\overline{h_{r'}}\geq \overline{r'}\geq \overline{r_0}$ for some normal~$R$-polynomial~$h_{r'}$.
Similarly, we get~$\overline{r_0}\geq \overline{s_0}$ and thus~$\overline{r_0}=\overline{s_0}$. We claim that $r_0=s_0$.
Otherwise, we get~$0\neq r_0-s_0\in I$. Similarly to the above reasoning,  we get~$\overline{r_0}> \overline{r_0-s_0}=\overline{h_{r''}}\geq \overline{r''} \geq\overline{r_0}$ for some $r''\in R$, a contradiction.
Therefore, we get~$s_0=r_0$. Supposing that for each~$\nu<\mu$, we have
$$
S^{\nu}:=\{s\in\SS\mid \ov{s}\leq \nu\}=\{r\in R\mid \ov{r}\leq \nu\}=:R^{\nu}.
$$
We shall show that~$S^{\mu}=R^{\mu}$. By symmetry, it is enough to show that~$S^{\mu}\subseteq R^{\mu}$.
Suppose~$s$ belongs to $S^{\mu}$. If ~$\ov\ss<\mu$ holds, then we have $s\in S^{\ov{s}}=R^{\ov{s}}\subseteq R^{\mu}$.
If~$\ov\ss=\mu$, then
$\overline{s}=\overline{h_{r}}\geq \overline{r}$ for some normal~$R$-polynomial~$h_{r}$.
We  claim that~$\overline{s}=\overline{r}$. If we get~$\mu=\overline{s}>\overline{r}$,
then $r$ lies in $R^{\ov{r}}=S^{\ov{r}}$ and thus~$r\in S\sm\{\ss\}$, which contradicts  the fact that $S$ is a reduced Gr\"{o}bner--Shirshov basis.
We now claim that  $s=r$. If $s\neq r$, then we have $0\neq s-r\in I$. Moreover, since~$S$ and~$R$ are reduced Gr\"obner--Shirshov bases, we get~$\overline{s-r}\in \Irr(S)\cup \Irr(R)$ which is in contradiction with Theorem~\ref{cd-lemma for Lb} \ITEM2.
\end{proof}

\begin{rema}
For a graded ideal $I$ of $\Lbs(X)$, Theorem \ref{reduced} gives an algorithm to find the reduced Gr\"obner--Shirshov basis for $I$.
\end{rema}
\begin{coro}
Let $I$ and $J$ be two graded ideals of $\Lbs(X)$,  $S_I$ and $S_J$ be the reduced Gr\"obner--Shirshov bases for $I$ and $J$ respectively. Then $I=J$ if and only if $S_I=S_J$.
\end{coro}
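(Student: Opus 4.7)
The plan is to read the corollary off Theorem~\ref{reduced} together with the very definition of a Gr\"obner--Shirshov basis for an ideal. No genuinely new argument is required: the entire substance of the statement is already packaged in the existence-and-uniqueness clause of that theorem, and what remains to do is merely to unwind the definitions on each side of the biconditional.

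For the forward implication, I would suppose $I=J$. Then $S_I$ and $S_J$ are, by hypothesis, both reduced Gr\"obner--Shirshov bases for the single graded ideal $I=J$ of $\Lbs(X)$. Applying the uniqueness clause of Theorem~\ref{reduced} to this common ideal immediately yields $S_I=S_J$. For the reverse implication, I would suppose $S_I=S_J$. Since $S_I$ is, in particular, a Gr\"obner--Shirshov basis for $I$, it generates $I$ as a graded ideal, so $I=\Id(S_I)$; the same reasoning gives $J=\Id(S_J)$. Therefore
\[
I=\Id(S_I)=\Id(S_J)=J.
\]

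There is essentially no obstacle: each direction is a single line once Theorem~\ref{reduced} is in hand. The only point worth flagging is the asymmetry of what is actually used. The reverse direction relies only on the fact that a Gr\"obner--Shirshov basis generates its ideal, whereas the forward direction is the one that genuinely exploits uniqueness of the \emph{reduced} basis (a minimal or unreduced Gr\"obner--Shirshov basis for $I$ is in general far from unique). In this sense the corollary is just a restatement that ``passage to the reduced Gr\"obner--Shirshov basis'' is a well-defined complete invariant of graded ideals of $\Lbs(X)$, which is exactly the content of the preceding theorem.
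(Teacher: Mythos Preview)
Your proposal is correct and matches the paper's approach: the paper states this corollary without proof, treating it as an immediate consequence of Theorem~\ref{reduced}, and your argument spells out exactly that reading. Both directions are handled correctly, and your observation about which direction actually uses the uniqueness of the \emph{reduced} basis is accurate.
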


According to Theorem \ref{reduced}, for any Leibniz (super)algebra~$\mA$, $\mA$ has a representation $\mA\cong\Lbs(X|R)$, where $X$ is generators and $R$ is a unique reduced \gsb\ in $\Lbs(X)$.
  The following proposition shows that $\mA$  has another representation $\mA\cong\Lbs(X'|R')$, where $X'$ is generators and $R'$ is a unique reduced \gsb\ in $\Lbs(X')$ with the length of the leading monomial of each polynomial  in $R'$ is greater than 1. If this is the case, then there are only left  multiplication compositions in $R'$.   This result will be useful when we characterize extensions of Leibniz (super)algebras.

\begin{prop}\label{greater1}
Let $R$ be a reduced \gsb\ in $\Lbs(X)$, $R=R'\cup S$, where $S=\{g\in R\mid \ell(\ov{g})=1\}$ and $R'\cap S=\emptyset$. Let $X'=X\setminus\ov{S}$. Then $\Lbs(X|R)$ is isomorphic to $\Lbs(X'|R')$, $R'$ is a reduced \gsb\ in $\Lbs(X')$, and the length of the leading monomial of each polynomial  in $R'$ is greater than 1.
\end{prop}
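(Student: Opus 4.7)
The plan is to first establish two structural observations about the reduced basis $R$, then construct the isomorphism via universal properties, and finally deduce the Gr\"obner--Shirshov and reducedness claims by an $\Irr$-set comparison.

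Write $x_g:=\ov{g}\in X$ for each $g\in S$. The two key facts I would prove at the outset are: (a) for every $g\in R'$, $g\in\Lbs(X')$; and (b) for every $g\in S$, $g-x_g\in\Lbs(X')$. Both rely on $R$ being reduced. Any $\mu\in\supp(g)\subseteq\Irr(R\setminus\{g\})$ cannot contain a letter $x_{g'}$ with $g'\in S$ and $g'\neq g$, because $\ell(\ov{g'})=1$ allows us to insert $g'$ at the offending position, exhibiting $\mu$ as the leading monomial of a normal $g'$-polynomial --- contradicting $\mu\in\Irr(R\setminus\{g\})$. For (a) this suffices since every $g'\in S$ automatically differs from $g\in R'$. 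For (b), I would additionally exclude that $\mu\in\supp(g)\setminus\{x_g\}$ contains $x_g$ itself: $\ell(\mu)\geq 2$ together with positivity of $\deg$ forces $\deg(\mu)>\deg(x_g)$, violating $\mu<x_g$; and $\ell(\mu)=1$ forces $\mu=x_g$, excluded.

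For the isomorphism I would use the universal property of free Leibniz superalgebras twice. The inclusion $X'\hookrightarrow X$ composed with the quotient $\Lbs(X)\to\Lbs(X|R)$ induces $\bar\phi:\Lbs(X'|R')\to\Lbs(X|R)$, well-defined by (a) since each $g\in R'$ lies in $\Lbs(X')$ and vanishes in $\Lbs(X|R)$. For the inverse, define $\psi:\Lbs(X)\to\Lbs(X'|R')$ by $\psi(x)=x$ for $x\in X'$ and $\psi(x_g)=-(g-x_g)$ for $g\in S$ (well-defined by (b)). To verify $\psi(R)=0$: for $g\in S$, $\psi(g)=\psi(x_g)+\psi(g-x_g)=-(g-x_g)+(g-x_g)=0$; for $g\in R'$, $g\in\Lbs(X')$, so $\psi(g)=g$, which is zero in $\Lbs(X'|R')$. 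Hence $\psi$ factors through $\bar\psi:\Lbs(X|R)\to\Lbs(X'|R')$, and $\bar\phi\circ\bar\psi$ and $\bar\psi\circ\bar\phi$ agree with the identity on generators.

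The length condition on leading monomials of $R'$ is immediate from the definition of $R'$. To finish I would prove $\Irr(R)=\Irr(R')$ as subsets of $\NF(X)$. The inclusion $\Irr(R)\subseteq\NF(X')$ is the insertion argument above. For $\mu\in\NF(X')$, being the leading monomial of some normal $R$-polynomial is equivalent to being the leading monomial of some normal $R'$-polynomial in $\Lbs(X')$: normal $S$-polynomials always have leading monomials containing a letter of $\ov{S}$ and hence lie outside $\NF(X')$; and a normal $R'$-polynomial $[ga_1\cdots a_m]_{_L}$ in $\Lbs(X)$ whose leading monomial sits in $\NF(X')$ must have all $a_i\in X'$, so it is already a normal $R'$-polynomial in $\Lbs(X')$. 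By Theorem~\ref{cd-lemma for Lb}, $\Irr(R)$ is a linear basis of $\Lbs(X|R)\cong\Lbs(X'|R')$, hence $\Irr(R')=\Irr(R)$ is a basis of $\Lbs(X'|R')$, and (iii)$\Rightarrow$(i) of Theorem~\ref{cd-lemma for Lb} yields that $R'$ is a Gr\"obner--Shirshov basis in $\Lbs(X')$. Reducedness follows from reducedness of $R$ together with $\Irr(R\setminus\{g\})\cap\NF(X')=\Irr(R'\setminus\{g\})$, proved by the same comparison applied to $R\setminus\{g\}$. The main obstacle is precisely this $\Irr$-set bookkeeping --- verifying that normal $R$-polynomials landing in $\NF(X')$ are exactly the normal $R'$-polynomials over $X'$; once that is in place the Gr\"obner--Shirshov and reducedness assertions drop out mechanically from the CD-lemma.
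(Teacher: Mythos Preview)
Your proof is correct and follows essentially the same route as the paper's: establish the isomorphism via maps induced by the universal property, then compare $\Irr(R)$ with $\Irr(R')$ and invoke the Composition--Diamond lemma. The main difference is organizational: you isolate and prove the structural facts (a) and (b) explicitly at the start, whereas the paper uses them implicitly (in defining $\varphi_1$ and in its chain of equalities for $\Irr(R)=\Irr(R')$); and you construct mutually inverse maps $\bar\phi,\bar\psi$ directly, while the paper instead shows that two epimorphisms $\varphi_1,\varphi_2$ out of $\Lbs(X)$ have the same kernel. Your treatment of reducedness via $\Irr(R\setminus\{g\})\cap\NF(X')=\Irr(R'\setminus\{g\})$ is also more explicit than the paper's one-line appeal to $R'\subseteq R$.
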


\begin{proof}
 Note that $R$ is a homogeneous set.
Assume that every element $g$ in $S$ has the form: $g=\ov{g}+r_g$.
Then we have two epimorphisms:
 \begin{center}
 $\varphi_1: \Lbs(X) \rightarrow \Lbs(X'|R'),\  x\mapsto x+\Id(R')$ if $x\in X'$; $\ov{g}\mapsto -r_g+\Id(R')$ if $\ov{g}\in\ov{S}$,
 \end{center}
 \begin{center}
 $\varphi_2: \Lbs(X) \rightarrow \Lbs(X|R)$, $ x\mapsto x+\Id(R)$ for all $x\in X$.
 \end{center}

Now we show that $\ker\varphi_1 = \ker\varphi_2$. According to the fact that $R$ is a reduced \gsb\ in $\Lbs(X)$, we have the following two statements.
(i) If $f\in S$, then we have  $\varphi_1(f)=\varphi_1(\overline{f})+\varphi_1(r_f)=-r_f+r_f+\Id(R')=0+\Id(R')$.
(ii) If $f\in R'$, then $\varphi_1(f)=f+\Id(R')=0$ in $\Lbs(X'|R')$. This shows that $\Id(R)=\ker\varphi_2\subseteq \ker \varphi_1$.

For every $f$ in $\Lbs(\XX)$, by Lemma \ref{f=Irr+n-s-polynomials}, we have $\ff=\sum\alpha_i\mu_i+\sum\beta_jh_{j,s_j}$ in $\Lbs(\XX)$,
where each $\alpha_i,\beta_j\in k$, $ \mu_i\in \Irr(R)$ and $\hh_{j,s_j}$ is
a normal $R$-polynomial in $\Lbs(\XX)$. Then $\varphi_1(f)=\sum\alpha_i\mu_i+\Id(R')$ in $\Lbs(\XX'|R')$ and $\varphi_2(f)=\sum\alpha_i\mu_i+\Id(R)$ in $\Lbs(\XX|R)$.
Assume $\varphi_2(f)\neq 0$. Because the subset $\Id(R')$ of $\Lbs(X')$ is contained in the subset $\Id(R)$ of $\Lbs(X)$, we have $\varphi_1(f)\neq 0$. Thus $ \ker\varphi_1 \subseteq \ker\varphi_2$.

Therefore $\Lbs(X|R)\cong\Lbs(X'|R')$. Furthermore,
\begin{align*}
\Irr(R)&=\{\mu\in \NF(\XX)\mid \mu\neq \ov{\hh_r} \mbox{ for any normal } R\mbox{-polynomial } \hh_r \mbox{ in } \Lbs(X)\}\\
&=\{\mu\in \NF(\XX')\mid \mu\neq \ov{\hh_r} \mbox{ for any normal } R\mbox{-polynomial } \hh_r \mbox{ in } \Lbs(X)\}\\
&=\{\mu\in \NF(\XX')\mid \mu\neq \ov{\hh_r} \mbox{ for any normal } R'\mbox{-polynomial } \hh_r \mbox{ in } \Lbs(X')\}\\
&=\Irr(R')
\end{align*}
By Theorem \ref{cd-lemma for Lb} $\Irr(R)+\Id(R):=\{\mu+\Id(R)\mid\mu\in\Irr(R)\}$ is a linear basis of the algebra $\Lbs(\XX | R)$. Thus $\Irr(R')+\Id(R')=\sigma(\Irr(R)+\Id(R))$ is a linear basis of the algebra $\Lbs(\XX' | R')$. According to Theorem \ref{cd-lemma for Lb}, we know that $R'$ is a \gsb\ in $\Lbs(X')$. Since $R$ is a reduced \gsb\ in $\Lbs(X)$, $R'\ (\subseteq R)$ is a reduced \gsb\ in $\Lbs(X')$.

\end{proof}

\section{Applications}\label{linear-basis-metabelian}
Our aim in this section is to construct linear bases of free metabelian Leibniz (super)algebras and those of free metabelian Lie algebras  by using Gr\"{o}bner--Shirshov bases theories of Leibniz (super)algebras.  In this section, we assume that the degree of every element in $\XX$ is 1, that is $\deg(\aa)=1$, for all $\aa\in\XX$.

\subsection{Linear bases of free metabelian Leibniz superalgebras}
\begin{defi}\cite{belian}
A Leibniz (super)algebra $(\mathcal{A},(--))$ is called \emph{abelian} if~$(xy) = 0$ for all~$  x, y$  in~$\mA$, i.e., $\mA^2=0$.
A Leibniz (super)algebra is called \emph{metabelian} if it is an extension of an abelian
Leibniz (super)algebra by another abelian Leibniz (super)algebra. Denote by~$\MLbs(X)$ the free metabelian Leibniz superalgebra
generated by~$X=X_0\cup X_1$.
\end{defi}

 Clearly, the free metabelian Leibniz superalgebra generated by~$X=X_0\cup X_1$ can be presented by generators and relations obviously.

 \begin{lemm}\label{T-metabelian}
 Let $
T=\{([\cc_1\cdots\cc_p]_{_L}[\dd_1\cdots\dd_t]_{_L})\mid \cc_1,\dots,\cc_p,\dd_1,\dots,\dd_t\in\XX,\ p,t\geq2\}$.  Then we have
$\MLbs(X)=\Lbs(\XX|T)$.
\end{lemm}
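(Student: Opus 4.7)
The plan is to express $\MLbs(X)$ as $\Lbs(X)/J$ for $J$ the verbal ideal of the metabelian identity, and then to verify $J = \Id(T)$.

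First I would record the standard characterization: a Leibniz superalgebra $\mA$ is metabelian if and only if $(\mA^2)^2 = 0$. The ``only if'' direction is immediate from the fact that in an extension $0 \to I \to \mA \to B \to 0$ with $B$ abelian one has $\mA^2 \subseteq I$, whence $(\mA^2)^2 \subseteq I^2 = 0$. The ``if'' direction uses $I = \mA^2$, which is itself a graded ideal, abelian by hypothesis, with abelian quotient $\mA/\mA^2$. Thus the variety of metabelian Leibniz superalgebras is defined by the homogeneous multilinear identity $(xy)(zw)=0$, and $\MLbs(X) = \Lbs(X)/J$, where $J$ is the graded ideal generated by all products $(uv)$ with $u, v \in \Lbs(X)^2$.

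Next I would identify $\Lbs(X)^2$ as the linear span of $\{[\aa_1\cdots\aa_n]_{_L}: \aa_i \in X,\ n \geq 2\}$. The inclusion $\supseteq$ is trivial since $[\aa_1\cdots\aa_n]_{_L} = ([\aa_1\cdots\aa_{n-1}]_{_L}\aa_n)$ for $n \geq 2$. The inclusion $\subseteq$ follows by bilinearity together with Lemma~\ref{multiplication}: any product $(\mu\nu)$ of basis monomials $\mu, \nu \in \NF(X)$ expands into left-normed monomials of length $\ell(\mu)+\ell(\nu) \geq 2$.

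Consequently, by bilinearity once more, $\Lbs(X)^2 \cdot \Lbs(X)^2$ is spanned by elements of the form $([\cc_1\cdots\cc_p]_{_L}[\dd_1\cdots\dd_t]_{_L})$ with $p,t \geq 2$, that is, by $T$; so the ideal it generates is exactly $\Id(T) = J$, giving $\MLbs(X) = \Lbs(X|T)$. I do not anticipate a substantive obstacle: the only delicate point is that the whole argument must stay inside the $\mathbb{Z}_2$-graded setting, which is automatic here because every generator of $T$ is homogeneous and $\mA^2$ is a graded ideal in any Leibniz superalgebra.
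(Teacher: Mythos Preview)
Your proposal is correct and is precisely the standard argument the paper has in mind: in the paper Lemma~\ref{T-metabelian} is stated without proof (preceded only by the sentence ``Clearly, the free metabelian Leibniz superalgebra generated by~$X$ can be presented by generators and relations obviously''). Your write-up simply makes explicit the equivalence of metabelianity with $(\mA^2)^2=0$ and the identification of $\Lbs(X)^2$ with the span of $\NF(X)$-monomials of length $\geq 2$, which together give $\Id(T)$ as the verbal ideal.
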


To offer a \gsb\ for~$\MLbs(X)$, the set~$T$ is not a good choice because it is not easy to detect the leading monomials of elements in~$T$. So we need to get another presentation for~$\MLbs(X)$.
\begin{lemm}\label{st}
Let~$S=\{[\cc_1\cdots\cc_p\aa_1\aa_2]_{_L}-(-1)^{|\aa_1||\aa_2|}[\cc_1\cdots\cc_p\aa_2\aa_1]_{_L}\mid \aa_1,\aa_2,\cc_1,\dots,\cc_p\in\XX,\ p\geq2\}$.
Then we get~$\Id(T)=\Id(S)$ and thus~$\MLbs(X)=\Lbs(\XX|S)$.
\end{lemm}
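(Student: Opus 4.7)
The plan is to establish the equality $\Id(T)=\Id(S)$ by showing the two inclusions separately, using the Leibniz superidentity as the unique tool. Once this is done, the statement $\MLbs(X)=\Lbs(X\mid S)$ is immediate from Lemma~\ref{T-metabelian}.

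First I would prove $\Id(S)\subseteq\Id(T)$, which is the easy direction. Applying the Leibniz superidentity with $x=[\cc_1\cdots \cc_p]_{_L}$, $y=\aa_1$, $z=\aa_2$ gives
$$
([\cc_1\cdots \cc_p]_{_L}(\aa_1\aa_2))=[\cc_1\cdots \cc_p\aa_1\aa_2]_{_L}-(-1)^{|\aa_1||\aa_2|}[\cc_1\cdots \cc_p\aa_2\aa_1]_{_L}.
$$
The left-hand side is an element of $T$ (taking $t=2$ and $\dd_1=\aa_1,\dd_2=\aa_2$), and the right-hand side is the defining generator of $S$. Hence each generator of $S$ already lies in $T$, so $\Id(S)\subseteq\Id(T)$.

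Next I would prove $\Id(T)\subseteq\Id(S)$ by induction on $t\ge 2$, the length of the right-hand factor in a generator of $T$. The base case $t=2$ is exactly the identity displayed above. For the inductive step $t\ge 3$, set $u=[\cc_1\cdots \cc_p]_{_L}$ and apply the Leibniz superidentity with $x=u$, $y=[\dd_1\cdots \dd_{t-1}]_{_L}$, $z=\dd_t$:
$$
\bigl(u[\dd_1\cdots \dd_t]_{_L}\bigr)=\bigl((u[\dd_1\cdots \dd_{t-1}]_{_L})\dd_t\bigr)-(-1)^{|[\dd_1\cdots \dd_{t-1}]_{_L}||\dd_t|}\bigl((u\dd_t)[\dd_1\cdots \dd_{t-1}]_{_L}\bigr).
$$
The first term lies in $\Id(S)$ because $(u[\dd_1\cdots \dd_{t-1}]_{_L})\in\Id(S)$ by the induction hypothesis (right factor of length $t-1\ge 2$), and an ideal is closed under right multiplication by $\dd_t$. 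The second term is, up to sign, another generator of $T$: the left factor $u\dd_t=[\cc_1\cdots \cc_p\dd_t]_{_L}$ still has length $\ge p+1\ge 3$, and the right factor $[\dd_1\cdots \dd_{t-1}]_{_L}$ has length $t-1\ge 2$; since its right factor is strictly shorter than $t$, the induction hypothesis again places it in $\Id(S)$.

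Combining the two inclusions yields $\Id(T)=\Id(S)$, and then Lemma~\ref{T-metabelian} gives $\MLbs(X)=\Lbs(X\mid T)=\Lbs(X\mid S)$. I do not anticipate a genuine obstacle: the only delicate point is bookkeeping the length conditions $p\ge 2$ and $t\ge 2$ through the induction, and verifying that the second summand in the Leibniz expansion lands back in $T$ with an admissible pair $(p+1,t-1)$, so that the induction on $t$ actually terminates at the base case $t=2$.
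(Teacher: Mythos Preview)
Your proof is correct and follows essentially the same approach as the paper's own proof: both directions use the Leibniz superidentity in the same way, and the containment $\Id(T)\subseteq\Id(S)$ is obtained by the same induction on $t$ with the same two-term decomposition.
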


\begin{proof}
First we show $\Id(S)\subseteq \Id(T)$. For $[\cc_1\cdots\cc_p]_{_L}\in \NF(\XX),\ \aa_1,\aa_2\in\XX,\ p\geq2$, we have
$$(([\cc_1\cdots\cc_p]_{_L}\aa_1)\aa_2)
-(-1)^{|\aa_1||\aa_2|}(([\cc_1\cdots\cc_p]_{_L}\aa_2)\aa_1)=([\cc_1\cdots\cc_p]_{_L}(\aa_1\aa_2)) \in \Id(T).$$

In order to show that~$([\cc_1\cdots\cc_p]_{_L}[\dd_1\cdots\dd_t]_{_L})\in \Id(S)$, where $\cc_1,\dots,\cc_p,\dd_1,\dots,\dd_t\in\XX$ and $p, t\geq2$, we use induction on $t$. If $t=2$, then we have
$$([\cc_1\cdots\cc_p]_{_L}[\dd_1\dd_2]_{_L})=[\cc_1\cdots\cc_p\dd_1\dd_2]_{_L}
-(-1)^{|\dd_1||\dd_2|}[\cc_1\cdots\cc_p\dd_2\dd_1]_{_L}\in \Id(S).$$
For~$t>2$, we obtain \begin{multline*}([\cc_1\cdots\cc_p]_{_L}[\dd_1\cdots\dd_t]_{_L})\\
=(([\cc_1\cdots\cc_p]_{_L}[\dd_1\cdots\dd_{t-1}]_{_L})\dd_t)
-(-1)^{|[\dd_1\cdots\dd_{t-1}]_{_L}||\dd_t|}(([\cc_1\cdots\cc_p]_{_L}\dd_t)[\dd_1\cdots\dd_{t-1}]_{_L}).
\end{multline*}
By induction hypothesis, both~$([\cc_1\cdots\cc_p\dd_t]_{_L}[\dd_1\cdots\dd_{t-1}]_{_L})$ and~$([\cc_1\cdots\cc_p]_{_L}[\dd_1\cdots\dd_{t-1}]_{_L})$ lie in~$\Id(S)$.  Thus $\Id(T)\subseteq \Id(S)$.
\end{proof}

Denote by~$\chart(k)$ the characteristic of the field~$\kk$. We shall see that fields of different characteristics lead  to   different linear bases of free metabelian Leibniz superalgebras.
Define
\begin{equation}\label{s1=}
{S_1=\{[\cc_1\cdots\cc_p\aa_1\aa_2]_{_L}-(-1)^{|\aa_1||\aa_2|}[\cc_1\cdots\cc_p\aa_2\aa_1]_{_L}\mid \cc_1,\dots,\cc_p, \aa_1,\aa_2\in\XX,\  p\geq2,\ \aa_2<\aa_1\}},
\end{equation}
$$S_2=\{[\cc_1\cdots\cc_p\aa\aa]_{_L}\mid \aa, \cc_1,\dots,\cc_p\in\XX,\ p\geq 2,\ |\aa|=1\},$$
\begin{equation}\label{s'=}
S'=S_1\cup S_2.
\end{equation}
It is clear that in $\Lbs(\XX)$, we have $\Id(S)=\Id(S_1)$ if $\chart(\kk)=2$, and $\Id(S)=\Id(S')$ if $\chart(\kk)\neq2$, where $S$ is defined as in Lemma \ref{st}.

Before constructing a linear basis of $\MLbs(X)$,  we first prove the following lemma, which is helpful for calculating a \gsb\ for $\MLbs(X)$.

\begin{lemm}\label{ell2}
  Let $S_1$ and $S'$ be defined as (\ref{s1=}) and (\ref{s'=}). If~$\nu, [\aa_1\cdots\aa_p]_{_L}$ lie in $\NF(\XX),$ and $\ell(\nu), p\geq2 $, then in $\Lbs(\XX)$ we have
  $ (\nu[\aa_1\cdots\aa_p]_{_L})=0$, or
 $
 (\nu[\aa_1\cdots\aa_p]_{_L})=\sum \beta_j h_{j,s_j},
 $
  where each~$\beta_j\in\kk$ and each~$h_{j,s_j}$ is a normal $S_1$-polynomial if~$\chart(\kk)=2$; each~$h_{j,s_j}$ is a normal $S'$-polynomial if~$\chart(\kk)\neq 2$. Moreover, for each~$j$ satisfying~$\beta_j\neq 0$, we have~$\deg(\overline{h_{j,s_j}})\leq\deg(\nu)+\deg([\aa_1\cdots\aa_p]_{_L})$.
\end{lemm}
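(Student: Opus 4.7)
The plan is to induct on $p \geq 2$, uniformly over all $\nu \in \NF(\XX)$ with $\ell(\nu) \geq 2$; write $\nu = [b_1\cdots b_m]_{_L}$ with $m \geq 2$. The idea is that the Leibniz superidentity peels off one letter from $[\aa_1\cdots\aa_p]_{_L}$ at each step, either keeping $\nu$ fixed or replacing it by the slightly longer element $(\nu\aa_p) = [b_1\cdots b_m\aa_p]_{_L} \in \NF(\XX)$, so the recursion stays within the hypotheses of the lemma.

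For the base case $p = 2$, applying the Leibniz superidentity with $x = \nu$, $y = \aa_1$, $z = \aa_2$ gives
$$(\nu(\aa_1\aa_2)) = [b_1\cdots b_m\aa_1\aa_2]_{_L} - (-1)^{|\aa_1||\aa_2|}[b_1\cdots b_m\aa_2\aa_1]_{_L}.$$
If $\aa_1 \neq \aa_2$, this equals (up to a nonzero sign) an $S_1$-generator, hence is itself a normal $S_1$-polynomial. If $\aa_1 = \aa_2$ with $|\aa_1| = 0$, the right-hand side vanishes. If $\aa_1 = \aa_2$ with $|\aa_1| = 1$, it equals $2[b_1\cdots b_m\aa_1\aa_1]_{_L}$, which vanishes when $\chart(\kk) = 2$ and otherwise is twice an $S_2$-generator, itself a normal $S'$-polynomial. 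In every nonzero case the leading degree is $m + 2 = \deg(\nu) + \deg([\aa_1\aa_2]_{_L})$.

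For the inductive step $p \geq 3$, rewrite $[\aa_1\cdots\aa_p]_{_L} = ([\aa_1\cdots\aa_{p-1}]_{_L}\aa_p)$ and apply the Leibniz superidentity:
$$(\nu[\aa_1\cdots\aa_p]_{_L}) = ((\nu[\aa_1\cdots\aa_{p-1}]_{_L})\aa_p) - (-1)^{|\aa_p||[\aa_1\cdots\aa_{p-1}]_{_L}|}((\nu\aa_p)[\aa_1\cdots\aa_{p-1}]_{_L}).$$
For the first summand, the induction hypothesis applied to $(\nu[\aa_1\cdots\aa_{p-1}]_{_L})$ (valid since $p-1 \geq 2$) yields a linear combination of normal $S_1$- (resp.\ $S'$-) polynomials $h_{j,s_j}$; right-multiplying each by the single letter $\aa_p$ produces $[h_{j,s_j}\aa_p]_{_L}$, still normal by Remark \ref{hsa}. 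For the second summand, $(\nu\aa_p)$ lies in $\NF(\XX)$ with length $m+1 \geq 3$, so the induction hypothesis applies directly to the pair $((\nu\aa_p), [\aa_1\cdots\aa_{p-1}]_{_L})$. In both summands the degree bound $\deg(\overline{h_{j,s_j}}) \leq \deg(\nu) + \deg([\aa_1\cdots\aa_p]_{_L})$ is preserved, since enlarging $\nu$ by one letter is balanced by shortening the right-hand factor by one letter.

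The only step requiring real care is the base case: the super-sign $(-1)^{|\aa_1||\aa_2|}$ behaves differently for even and odd letters, which is precisely why the $S_2$-generators (squares of odd letters, present only in $S'$) must be adjoined when $\chart(\kk) \neq 2$ but can be dropped when $\chart(\kk) = 2$. After that, the inductive step is a mechanical application of the Leibniz superidentity together with Remark \ref{hsa}.
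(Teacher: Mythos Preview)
Your proof is correct and follows essentially the same approach as the paper: induction on $p$, with the base case $p=2$ handled by the three-way case split on whether $\aa_1\neq\aa_2$, $\aa_1=\aa_2$ even, or $\aa_1=\aa_2$ odd, and the inductive step handled by the Leibniz superidentity together with the observation (Remark~\ref{hsa}) that right-multiplying a normal $S$-polynomial by a letter yields another normal $S$-polynomial. Your explicit remark that the induction is uniform in $\nu$ (so that replacing $\nu$ by $(\nu\aa_p)$ in the second summand is legitimate) makes a point the paper leaves implicit.
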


\begin{proof}
   Assume $(\nu[\aa_1\cdots\aa_p]_{_L})\neq0$.
   We use induction on $p$. Assume $p=2$. If~$\aa_{1}\neq\aa_2$, then there is nothing to prove. If~$\aa_{1}=\aa_2$, then we need to consider whether $|\aa_1|$  equals $0$ or $1$.

   \ITEM1 Supposing~$|\aa_1|=0$, we have~ $(\nu[\aa_1\aa_2]_{_L})=[\nu\aa_1\aa_2]_{_L}-(-1)^{|\aa_1||\aa_2|}[\nu\aa_2\aa_1]_{_L}=0$.

   \ITEM2 Assuming~$|\aa_1|=1$, we get~$(\nu[\aa_1\aa_2]_{_L})=2[\nu\aa_1\aa_1]_{_L}$. If~$\chart(\kk)=2$, then~$[\nu\aa_1\aa_1]_{_L}=0$. Otherwise~$[\nu\aa_1\aa_1]_{_L}$ is a normal $S'$-polynomial.

    If $p>2$ holds, then by induction hypothesis, we get
\begin{align*}
  (\nu[\aa_1\cdots\aa_p]_{_L})
&=((\nu[\aa_1\cdots\aa_{p-1}]_{_L})\aa_p)-(-1)^{|\aa_p||[\aa_1\cdots\aa_{p-1}]_{_L}|}((\nu\aa_p)[\aa_1\cdots\aa_{p-1}]_{_L})\\
&=\sum\beta_i' (h'_{i,s_i'}\aa_p)-(-1)^{|\aa_p||[\aa_1\cdots\aa_{p-1}]_{_L}|} \sum \beta''_j h''_{j,s_j''},
\end{align*}
where $\beta_i',\beta''_j\in\kk$, and~$(h'_{i,s_i'}\aa_p)$, $h''_{j,s_j''}$ are normal $S_1$-polynomials if~$\chart(\kk)=2$; $(h'_{i,s_i'}\aa_p)$, $h''_{j,s_j''}$ are normal $S'$-polynomials if~$\chart(\kk)\neq2$.
Furthermore, $\deg(\overline{(h'_{i,s_i'}\aa_p)})$ and $\deg(\overline{h''_{j,s_j''}})$ are smaller than or equal to $\deg(\nu)+\deg([\aa_1\cdots\aa_p]_{_L})$.
\end{proof}

\begin{theorem}\label{th4.7}
Let $\MLbs(X)$ be the free metabelian Leibniz superalgebra generated by a well-ordered set $X=X_0\cup X_1$, and let $S_1$ and $S'$ be defined as (\ref{s1=}) and (\ref{s'=}).

(1) If~$\chart(\kk)=2$, then the set~$S_1$ is a \gsb\ in~$\Lbs(X)$, and the following set forms a linear basis of~$\MLbs(X)$:
$$\{[\aa_1\aa_2\cdots\aa_n]_{_L}\mid\aa_1,\dots,\aa_n\in\XX,\ n\geq1,\ \aa_3\leq\aa_4\leq\cdots\leq\aa_n\}.$$

(2) If~$\chart(\kk)\neq2$, then the set~$S'$ is a \gsb\ in~$\Lbs(X)$, and the following set forms a linear basis of~$\MLbs(X)$:
$$\{[\aa_1\aa_2\cdots\aa_n]_{_L}\mid\aa_1,\ldots,\aa_n\in\XX,\ n\geq1,\ \aa_3\leq\cdots\leq\aa_n;\ \mbox{for all}\ j\geq3, \mbox{if}\ |\aa_j|=1, \mbox{then}\ \aa_j\neq\aa_{j+1}\}.$$
\end{theorem}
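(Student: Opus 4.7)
The plan is to apply Theorem~\ref{cd-lemma for Lb} by showing that $S_1$ (respectively $S'$) is a Gr\"obner--Shirshov basis in $\Lbs(X)$ and then identifying $\Irr(S_1)$ (respectively $\Irr(S')$) with the basis claimed in (1) (respectively (2)).

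First, under the deg-length-lex order, the leading monomial of each element of $S_1$ of the form $[c_1\cdots c_p a_1 a_2]_L-(-1)^{|a_1||a_2|}[c_1\cdots c_p a_2 a_1]_L$ with $a_2<a_1$ is the descent monomial $[c_1\cdots c_p a_1 a_2]_L$, while each element of $S_2$ is its own leading monomial. By Lemma~\ref{hshs}\ITEM1, every normal $S$-polynomial $[s b_1\cdots b_m]_L$ has leading monomial of the form $[c_1\cdots c_p a_1 a_2 b_1\cdots b_m]_L$ (from $S_1$) or $[c_1\cdots c_p a a b_1\cdots b_m]_L$ (from $S_2$), so a word $[a_1\cdots a_n]_L\in\NF(X)$ lies in $\Irr(S_1)$ precisely when it has no descent $a_i>a_{i+1}$ at any $i\geq 3$, i.e., $a_3\leq\cdots\leq a_n$; and in $\Irr(S')$ precisely when in addition there is no equal pair $a_j=a_{j+1}$ with $|a_j|=1$ at $j\geq 3$. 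These sets match the bases stated in (1) and~(2).

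Next, I would check the two kinds of compositions. For a left multiplication composition $(\mu f)$ with $f\in S_1\cup S_2$: if $\ell(\mu)\geq 2$, Lemma~\ref{ell2} applied to each monomial summand of $f$ (each of length $\geq 4$) directly produces the required expansion as a linear combination of normal $S_1$- or $S'$-polynomials of degree $\leq\deg(\mu)+\deg(\bar f)$. If $\mu=x\in X$, I would iteratively unfold $(x[c_1\cdots c_p a_1 a_2]_L)$ and $(x[c_1\cdots c_p a_2 a_1]_L)$ by the Leibniz superidentity $x(yz)=(xy)z-(-1)^{|y||z|}(xz)y$; after subtracting the second expansion from the first (with sign $(-1)^{|a_1||a_2|}$), the cross terms cancel pairwise and what remains is a signed sum of $S_1$-relations on prefixes of length $p+1\geq 3$ (together with, in characteristic~$\neq 2$, $S_2$-relations that arise when a transposition creates adjacent equal parity-$1$ letters). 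A parallel calculation handles $f\in S_2$.

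For the inclusion composition $(f,g)_{\bar f}$ with $f\neq g$ and $\bar f=\overline{[g b_1\cdots b_m]_L}$, the case $m=0$ forces $\bar f=\bar g$, which uniquely determines $f$ and $g$ within $S_1\cup S_2$ (by the distinct ``descent at the tail'' versus ``repeat at the tail'' shapes), contradicting $f\neq g$. For $m\geq 1$ the two leading terms cancel, leaving two monomials of length $\ell(\bar f)$ in which the descent or squared pair now sits at an interior position $q+1,q+2$ with $q\geq 2$. I would iteratively apply further $S_1$- or $S_2$-rewrites at positions $\geq 3$---each of which strictly decreases the leading monomial in the deg-length-lex order---to sort both residual monomials to a common normal form, concluding that $(f,g)_{\bar f}\equiv 0\mod(S,\bar f)$. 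Theorem~\ref{cd-lemma for Lb} then yields both claims at once. The main technical obstacle is the sign bookkeeping: every transposition introduces a factor $(-1)^{|a_i||a_{i+1}|}$, and in characteristic~$\neq 2$ one must carefully separate the transpositions that produce squared parity-$1$ letters (absorbed by $S_2$) from those that do not. Once the rewriting strategy at positions $\geq 3$ is organized systematically, the verifications reduce to a finite and essentially mechanical case analysis.
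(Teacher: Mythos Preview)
Your outline matches the paper's strategy closely: use Lemma~\ref{ell2} for $\ell(\mu)\geq 2$, treat $\mu\in X$ by unfolding via the Leibniz identity, and handle inclusion compositions by sorting with $S_1$-rewrites at positions $\geq 3$. The identification of $\Irr(S_1)$ and $\Irr(S')$ is correct, and your argument that $m=0$ is impossible in an inclusion composition (distinct tail shapes) is fine.

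There is, however, a genuine slip in the $\mu=x\in X$ case. After one step of the Leibniz identity you get
\[
(x\,f)=\bigl[x[c_1\cdots c_p]_{_L}a_1a_2\bigr]_{_L}-(-1)^{|a_1||a_2|}\bigl[x[c_1\cdots c_p]_{_L}a_2a_1\bigr]_{_L}+\text{(four ``cross'' terms)},
\]
where the cross terms are of the form $[xa_i[c_1\cdots c_p]_{_L}a_j]_{_L}$ and $((xa_j)([c_1\cdots c_p]_{_L}a_i))$. These do \emph{not} cancel pairwise. What makes them trivial is that each contains a product of two factors of length $\geq 2$ (namely $(xa_i)$ or $(xa_j)$ against $[c_1\cdots c_p]_{_L}$ or $([c_1\cdots c_p]_{_L}a_i)$), so Lemma~\ref{ell2} applies to each separately; together with Remark~\ref{hsa} for the trailing letter, this writes every cross term as a combination of normal $S_1$- (or $S'$-) polynomials of the required degree. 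The paper carries this out explicitly. Once you replace ``cancel pairwise'' by ``each is trivial via Lemma~\ref{ell2}'', the remaining pair $[x[c_1\cdots c_p]_{_L}a_1a_2]_{_L}-(-1)^{|a_1||a_2|}[x[c_1\cdots c_p]_{_L}a_2a_1]_{_L}$ does expand (via Lemma~\ref{multiplication}) into the sum of $S_1$-relations on length-$(p+1)$ prefixes that you describe, and the argument goes through exactly as in the paper.
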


\begin{proof}
First we consider the left multiplication composition
$$
h:=(\mu([\cc_1\cdots\cc_p\aa_1\aa_2]_{_L}-(-1)^{|\aa_1||\aa_2|}[\cc_1\cdots\cc_p\aa_2\aa_1]_{_L})),
$$
where~$\mu\in\NF(\XX)$, ${\cc_1,\dots,\cc_p,\aa_1, \aa_2\in \XX}$, $\aa_2\leq\aa_1$ and $p\geq2$. Assume that $ h\neq 0$.

If~$\ell(\mu)\geq 2$ holds, then by Lemma \ref{ell2},  we have
$$ h\equiv 0 \mod(S_1,\deg(\mu)+\deg([\cc_1\cdots\cc_p\aa_1\aa_2]_{_L}))\ \mbox{if }\ \chart(\kk)=2$$ and
$$ h\equiv 0 \mod(S',\deg(\mu)+\deg([\cc_1\cdots\cc_p\aa_1\aa_2]_{_L}))\ \mbox{if }\ \chart(\kk)\neq 2.$$

Now we suppose~$\ell(\mu)=1$ and $\aa_2<\aa_1$. Then  we deduce
\begin{multline*}
  h
=((\mu([\cc_1\cdots\cc_p]_{_L}\aa_1))\aa_2)
-(-1)^{|\aa_2||([\cc_1\cdots\cc_p]_{_L}\aa_1)|}((\mu\aa_2)([\cc_1\cdots\cc_p]_{_L}\aa_1))\\
-(-1)^{|\aa_1||\aa_2|}((\mu([\cc_1\cdots\cc_p]_{_L}\aa_2))\aa_1)
+(-1)^{|\aa_1||\aa_2|+|\aa_1||([\cc_1\cdots\cc_p]_{_L}\aa_2)|}((\mu\aa_1)([\cc_1\cdots\cc_p]_{_L}\aa_2))\\
=[\mu[\cc_1\cdots\cc_p]_{_L}\aa_1\aa_2]_{_L}
-(-1)^{|\aa_1||[\cc_1\cdots\cc_p]_{_L}|}[\mu\aa_1[\cc_1\cdots\cc_p]_{_L}\aa_2]_{_L} \\ -(-1)^{|\aa_1||\aa_2|}[\mu[\cc_1\cdots\cc_p]_{_L}\aa_2\aa_1]_{_L}
+(-1)^{|\aa_1||\aa_2|+|\aa_2||[\cc_1\cdots\cc_p]_{_L}|}[\mu\aa_2[\cc_1\cdots\cc_p]_{_L}\aa_1]_{_L}\\
-(-1)^{|\aa_2||([\cc_1\cdots\cc_p]_{_L}\aa_1)|}((\mu\aa_2)([\cc_1\cdots\cc_p]_{_L}\aa_1))+(-1)^{|\aa_1||\aa_2|+|\aa_1||([\cc_1\cdots\cc_p]_{_L}\aa_2)|}((\mu\aa_1)([\cc_1\cdots\cc_p]_{_L}\aa_2)).
\end{multline*}

By Lemma \ref{multiplication},
we obtain \begin{align*}
  &[\mu[\cc_1\cdots\cc_p]_{_L}\aa_1\aa_2]_{_L}-(-1)^{|\aa_1||\aa_2|}[\mu[\cc_1\cdots\cc_p]_{_L}\aa_2\aa_1]_{_L}\\
  =&\sum\alpha_i[[\mu\xx_{i_1}\cdots\xx_{i_p}]_{_L}\aa_1\aa_2]_{_L}
  -(-1)^{|\aa_1||\aa_2|}\sum\alpha_i[[\mu\xx_{i_1}\cdots\xx_{i_p}]_{_L}\aa_2\aa_1]_{_L}\\
  \equiv&\ 0 \mod(S_1,\deg(\mu)+\deg([\cc_1\cdots\cc_p\aa_1\aa_2]_{_L})),
\end{align*}
where each $\alpha_i\in \kk$ and each~$(\xx_{i_1},\ldots,\xx_{i_{p}})$ is a permutation of~$(\cc_1,\ldots,\cc_p)$.
Note that, by Lemma \ref{ell2} and Remark \ref{hsa} the polynomials~$[\mu\aa_1[\cc_1\cdots\cc_p]_{_L}\aa_2]_{_L}$, $[\mu\aa_2[\cc_1\cdots\cc_p]_{_L}\aa_1]_{_L}$,  $((\mu\aa_2)([\cc_1\cdots\cc_p]_{_L}\aa_1))$ and $((\mu\aa_1)([\cc_1\cdots\cc_p]_{_L}\aa_2))$ are trivial modulo $(S_1,\deg(\mu)+\deg([\cc_1\cdots\cc_p\aa_1\aa_2]_{_L})$, if~$\chart(\kk)=2$; and they are  trivial modulo $(S',\deg(\mu)+\deg([\cc_1\cdots\cc_p\aa_1\aa_2]_{_L})$, if $\chart(\kk)\neq 2$.

Now we  assume $\ell(\mu)=1$ and $\aa_1=\aa_2=\aa\in\XX_1$.
Then by Lemmas \ref{multiplication} and \ref{ell2},  we have
\begin{align*}
&\ \ \   (\mu[\cc_1\cdots\cc_p\aa\aa]_{_L})
=((\mu([\cc_1\cdots\cc_p]_{_L}\aa))\aa)
-(-1)^{|\aa||([\cc_1\cdots\cc_p]_{_L}\aa)|}((\mu\aa)([\cc_1\cdots\cc_p]_{_L}\aa))\\
&=[\mu[\cc_1\cdots\cc_p]_{_L}\aa\aa]_{_L}
-(-1)^{|\aa||[\cc_1\cdots\cc_p]_{_L}|}[\mu\aa[\cc_1\cdots\cc_p]_{_L}\aa]_{_L} -(-1)^{|\aa||([\cc_1\cdots\cc_p]_{_L}\aa)|}((\mu\aa)([\cc_1\cdots\cc_p]_{_L}\aa))\\
&\equiv 0 \mod(S',\deg(\mu)+\deg([\cc_1\cdots c_p\aa\aa]_{_L})).
\end{align*}

Thus every left multiplication composition in $S_1$ is trivial, if~$\chart(\kk)=2$, and every left multiplication composition in $S'$ is trivial, if~$\chart(\kk)\neq2$.

Now we show that every inclusion composition in $S_1$ is trivial.
For the inclusion composition~$(f,g)_{\ov{f}}$, suppose that $f=[\cc_1\cdots\cc_p\aa_1\aa_2]_{_L}-(-1)^{|\aa_1||\aa_2|}[\cc_1\cdots\cc_p\aa_2\aa_1]_{_L}$, where $\cc_1,\dots,\cc_p$, $\aa_1,\aa_2$ lie in~$\XX,\ \aa_2<\aa_1$ and $p\geq2$. Then we have two cases to consider.

 \ITEM1 Let $\ov{g}=[\cc_1\cdots\cc_t]_{_L},\ 4\leq t\leq p$ and $\cc_t<\cc_{t-1}$.
 Then we   have
$$[\cc_1\cdots\cc_{t-1}\cc_t\cdots\cc_p\aa_2\aa_1]_{_L}\equiv
(-1)^{|\cc_{t-1}||\cc_t|}[\cc_1\cdots\cc_t\cc_{t-1}\cdots\cc_p\aa_2\aa_1]_{_L}\mod (S_1, \bar{\ff}),$$
$$[\cc_1\cdots\cc_t\cc_{t-1}\cdots\cc_p\aa_1\aa_2]_{_L}\equiv
(-1)^{|\aa_2||\aa_1|}
[\cc_1\cdots\cc_t\cc_{t-1}\cdots\cc_p\aa_2\aa_1]_{_L}\mod (S_1, \bar{\ff}).$$
 So  we deduce
\begin{multline*}(\ff,\gg)_{\bar{\ff}}
=-(-1)^{|\aa_2||\aa_1|}[\cc_1\cdots\cc_p\aa_2\aa_1]_{_L}
+(-1)^{|\cc_{t-1}||\cc_t|}[\cc_1\cdots\cc_t\cc_{t-1}\cdots\cc_p\aa_1\aa_2]_{_L}\\
\equiv -(-1)^{|\aa_2||\aa_1|+|\cc_{t-1}||\cc_t|}
([\cc_1\cdots\cc_t\cc_{t-1}\cdots\cc_p\aa_2\aa_1]_{_L}
    -[\cc_1\cdots\cc_t\cc_{t-1}\cdots\cc_p\aa_2\aa_1]_{_L} )
\equiv0 \mod (S_1, \bar{\ff}).
\end{multline*}

 \ITEM2 Set $\ov{g}=[\cc_1\cdots\cc_p\aa_1]_{_L}$ such that~$\aa_1<\cc_p$ and $p\geq 3$. Then we get~$\aa_2<\aa_1< c_p$ and
$[\cc_1\cdots\cc_p\aa_2\aa_1]_{_L}\equiv
(-1)^{|c_p||\aa_2|+|c_p||\aa_1|}([\cc_1\cdots\cc_{p-1}\aa_2\aa_1\cc_p]_{_L}\mod (S_1, \bar{\ff})$,\\
$[\cc_1\cdots\cc_{p-1}\aa_1\cc_p\aa_2]_{_L}\equiv
(-1)^{|\aa_2||\aa_1|+|c_p||\aa_2|}
([\cc_1\cdots\cc_{p-1}\aa_2\aa_1\cc_p]_{_L}\mod (S_1, \bar{\ff})$.
Thus
\begin{multline*}(\ff,\gg)_{\bar{\ff}}
=-(-1)^{|\aa_1||\aa_2|}[\cc_1\cdots\cc_p\aa_2\aa_1]_{_L}
+(-1)^{|\aa_1||\cc_p|}[\cc_1\cdots\cc_{p-1}\aa_1\cc_p\aa_2]_{_L}\\
\equiv -(-1)^{|\aa_2||\aa_1|+|c_p||\aa_2|+|c_p||\aa_1|}([\cc_1\cdots\cc_{p-1}\aa_2\aa_1\cc_p]_{_L}
    -[\cc_1\cdots\cc_{p-1}\aa_2\aa_1\cc_p]_{_L} )
\equiv0 \mod (S_1, \bar{\ff}).
\end{multline*}

This shows that $S_1$ is a \gsb\ in $\Lbs(\XX)$, if~$\chart(\kk)=2$.
Therefore, by Lemma~\ref{T-metabelian} and by Theorem~\ref{cd-lemma for Lb},  the following set:
$$\Irr(S_1)=\{[\aa_1\aa_2\cdots\aa_n]_{_L}\mid\aa_i\in\XX, n\geq i\geq1, n\geq1, \aa_3\leq\aa_4\leq\cdots\leq\aa_n\}$$
 forms a linear basis of~$\MLbs(X)=\Lbs(\XX)/\Id(S_1)$. So (1) holds.

To show (2), we prove that every inclusion composition~$(f,g)_{\ov{f}}$ of $S'$ is trivial.

If $f=[\cc_1\cdots\cc_p\aa_1\aa_2]_{_L}-(-1)^{|\aa_1||\aa_2|}[\cc_1\cdots\cc_p\aa_2\aa_1]_{_L}$, where $\cc_1,\dots,\cc_p,\aa_1,\aa_2\in\XX,\ \aa_2<\aa_1$ and $p\geq2$, then we have two cases to consider.
 \ITEM1 $\ov{g}=[\cc_1\cdots\cc_t]_{_L}, 4\leq t\leq p$. If $\cc_t<\cc_{t-1}$, then the $(f,g)_{\ov{f}}$ is trivial modulo $(S',\ov{f})$ because every inclusion composition in $S_1$ is trivial.
 If $\cc_{t-1}=\cc_t\in\XX_1$, then
 ${(\ff,\gg)_{\bar{\ff}}
=-(-1)^{|\aa_1||\aa_2|}[\cc_1\cdots\cc_{t-1}\cc_t\cdots\cc_p\aa_2\aa_1]_{_L}
\equiv 0 \mod (S', \bar{\ff}).}$
 \ITEM2~$\ov{g}=[\cc_1\cdots\cc_p\aa_1]_{_L}$.
 If~$\aa_1<\cc_p$, then the result also holds because of (1).
 Assume that~$\cc_p=\aa_1\in\XX_1$ and $p\geq 3$. Then we get~$\aa_2<c_p=\aa_1$ and
$$(\ff,\gg)_{\bar{\ff}}
=-(-1)^{|\aa_1||\aa_2|}[\cc_1\cdots\cc_p\aa_2\aa_1]_{_L}
\equiv -(-1)^{|\aa_2||\aa_1|+|c_p||\aa_2|}[\cc_1\cdots\cc_{p-1}\aa_2\cc_p\aa_1]_{_L}
\equiv0 \mod (S', \bar{\ff}).
$$

If $f=[\cc_1\cdots\cc_p\aa_1\aa_1]_{_L}$, where $\cc_1,\dots,\cc_p\in\XX,\  \aa_1\in\XX_1,\ p\geq2$, then~${(f,g)_{\ov{f}}\equiv0 \mod(S',\ov{f})}$ follows immediately, regardless of whether $\ov{g}=[\cc_1\cdots\cc_p\aa_1]_{_L}$ or $\ov{g}=[\cc_1\cdots\cc_t]_{_L}, {t\leq p}$.

So the set~$S'$ is a \gsb\ in~$\Lbs(X)$ if~$\chart(\kk)\neq2$.
Therefore, by Lemma~\ref{T-metabelian} and  Theorem~\ref{cd-lemma for Lb},  the following set forms a linear basis of~${\MLbs(X)=\Lbs(\XX)/\Id(S')}$:
\begin{align*}
\Irr(S')=\{&[\aa_1\aa_2\cdots\aa_n]_{_L}\mid\aa_1,\ldots,\aa_n\in\XX,\ n\geq1,\  \aa_3\leq\cdots\leq\aa_n;\\
& \mbox{for all}\ j\geq3, \mbox{if}\ |\aa_j|=1, \mbox{then}\ \aa_j\neq\aa_{j+1}\}.
\end{align*}

\end{proof}

In the free Leibniz algebra $\Lb(X)$ generated by $X$, we have $S_2=\emptyset$ in (\ref{s'=}). Then by Theorem \ref{th4.7},
$$
S_1=S'=\{[\cc_1\cdots\cc_p\aa_1\aa_2]_{_L}-[\cc_1\cdots\cc_p\aa_2\aa_1]_{_L}\mid \cc_1,\dots,\cc_p, \aa_1,\aa_2\in\XX,\  p\geq2,\ \aa_2<\aa_1\}
$$
is a \gsb\ in $\Lb(X)$.
Thus we have the following corollary.

\begin{coro}\label{coro4.17}
Let $\MLb(X)$ be the free metabelian Leibniz algebra generated by a well-ordered set $X$. Then the set $
\{[\cc_1\cdots\cc_p\aa_1\aa_2]_{_L}-[\cc_1\cdots\cc_p\aa_2\aa_1]_{_L}\mid \cc_1,\dots,\cc_p, \aa_1,\aa_2\in\XX,\  p\geq2,\ \aa_2<\aa_1\}
$
is a \gsb\ in $\Lb(X)$, and
the following set is a linear basis of $\MLb(X)$:
 $$\{[\aa_1\aa_2\cdots\aa_n]_{_L}\mid\aa_1\dots\aa_n\in\XX,\ n\geq1,\ \aa_3\leq\aa_4\leq\cdots\leq\aa_n\}.$$
\end{coro}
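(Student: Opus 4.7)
The plan is to deduce the corollary as a direct specialization of Theorem~\ref{th4.7} to the case where every generator has parity zero. First I would observe that the free Leibniz algebra $\Lb(X)$ coincides with the free Leibniz superalgebra $\Lbs(X)$ when $X = X_0$, since the Leibniz superidentity reduces to the ordinary Leibniz identity whenever all elements have parity zero. In particular every polynomial in $\Lb(X)$ is automatically homogeneous of parity $0$ and every ideal is graded, so all the homogeneity hypotheses built into Theorem~\ref{th4.7} are in force without any further check.

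Next I would translate the two defining sets $S_1$ and $S'$ from~(\ref{s1=}) and~(\ref{s'=}). Because $|\aa|=0$ for every $\aa \in \XX$, the sign $(-1)^{|\aa_1||\aa_2|}$ is identically $1$, so $S_1$ collapses to exactly the set
$$\{[\cc_1\cdots\cc_p\aa_1\aa_2]_{_L}-[\cc_1\cdots\cc_p\aa_2\aa_1]_{_L}\mid \cc_1,\dots,\cc_p,\aa_1,\aa_2\in\XX,\ p\geq 2,\ \aa_2<\aa_1\}$$
appearing in the statement. At the same time $S_2$, which consists of polynomials $[\cc_1\cdots\cc_p\aa\aa]_{_L}$ with $|\aa|=1$, is empty, so $S' = S_1$ regardless of the characteristic of the ground field $\kk$.

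Finally I would invoke Theorem~\ref{th4.7} directly: part~(1) applies if $\chart(\kk)=2$ and part~(2) applies otherwise. In the second case the extra constraint ``if $|\aa_j|=1$ then $\aa_j \neq \aa_{j+1}$'' is vacuous because no generator has odd parity. Both parts therefore yield uniformly that $S_1 = S'$ is a Gr\"obner--Shirshov basis in $\Lb(X) = \Lbs(X)$ and that
$$\{[\aa_1\aa_2\cdots\aa_n]_{_L}\mid\aa_1,\ldots,\aa_n\in\XX,\ n\geq 1,\ \aa_3\leq\aa_4\leq\cdots\leq\aa_n\}$$
is a linear basis of $\MLb(X) = \MLbs(X)$. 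There is no genuine obstacle beyond bookkeeping these specializations of signs and the emptiness of $S_2$; the substantive composition calculations have already been carried out inside the proof of Theorem~\ref{th4.7}.
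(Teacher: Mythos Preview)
Your proposal is correct and matches the paper's own argument: the paper simply notes that in $\Lb(X)$ we have $S_2=\emptyset$, hence $S_1=S'$, and then invokes Theorem~\ref{th4.7}. Your additional remarks on the vanishing of the signs and the vacuity of the odd-parity condition are just the explicit bookkeeping behind that one-line deduction.
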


Corollary \ref{coro4.17} is obtained in~\cite{mlei} when $X$ is a finite set.

\subsection{Linear bases of free metabelian Lie algebras}

Recall that a \emph{Lie algebra} $\mathcal{L}$ is a Leibniz algebra that satisfies the identity~$(x x)=0$  for all~$x\in \mathcal{L}$.

\begin{defi}\cite{mlie}\label{mlie}
A Lie algebra $(\mathcal{L}, (- -))$ is called \emph{metabelian} if ~$((x_1x_2)(x_3x_4))=0$ for~$x_1, x_2, x_3, x_4$ in~$\mathcal{L}$.
\end{defi}

\begin{lemm}\label{T}
Let~$T=\{(\mu\nu), (\mu'\nu')+(\nu'\mu'),(\mu'\mu') \mid \mu, \nu, \mu', \nu'\in \NF(\XX), \ell(\mu),\ell(\nu)\geq2\}$. Then $\Lb(X)/\Id(T)$ is the free metabelian Lie algebra generated by $X$.
\end{lemm}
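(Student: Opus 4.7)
The plan is to verify that $\Lb(X)/\Id(T)$ satisfies the universal property of the free metabelian Lie algebra generated by $X$. This decomposes into two tasks: first, showing that the quotient is itself a metabelian Lie algebra, and second, showing that any map from $X$ into a metabelian Lie algebra factors uniquely through it.

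For the first task, the quotient is automatically a Leibniz algebra, so I must upgrade it to a Lie algebra and verify the metabelian identity. The second and third families in $T$ give $(\mu\nu)+(\nu\mu)\equiv 0$ and $(\mu\mu)\equiv 0$ modulo $\Id(T)$ for all basis monomials. For an arbitrary element $f=\sum\alpha_i\mu_i$, expanding $(ff)$ bilinearly and pairing the off-diagonal terms $\alpha_i\alpha_j((\mu_i\mu_j)+(\mu_j\mu_i))$ with the diagonal $\alpha_i^2(\mu_i\mu_i)$ shows $(ff)\in\Id(T)$, so $(xx)=0$ holds throughout the quotient. The Jacobi identity is then a standard consequence of the Leibniz identity together with antisymmetry, so the quotient is Lie. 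For metabelianness, I need $((ab)(cd))\in\Id(T)$ for all $a,b,c,d\in\Lb(X)$; by multilinearity it suffices to take $a,b,c,d\in\NF(X)$. Applying Lemma \ref{multiplication} to $a=[\bb_1\cdots\bb_m]_L$, $b=[\aa_1\cdots\aa_n]_L$ expresses $(ab)$ as a sum of left-normed monomials of length $m+n\geq 2$, and similarly for $(cd)$. Hence $((ab)(cd))$ expands as a sum of products $(\tau_1\tau_2)$ with $\ell(\tau_1),\ell(\tau_2)\geq 2$, each of which is a generator of the first family in $T$.

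For the universal property, given a metabelian Lie algebra $\mathcal{L}$ and a set map $\phi:X\to\mathcal{L}$, the universal property of $\Lb(X)$ extends $\phi$ uniquely to a Leibniz homomorphism $\Phi:\Lb(X)\to\mathcal{L}$. I check that $\Phi$ vanishes on each generator of $\Id(T)$: on generators $(\mu\nu)$ with $\ell(\mu),\ell(\nu)\geq 2$, the images $\Phi(\mu),\Phi(\nu)$ lie in $\mathcal{L}^2$ and so their product vanishes by the metabelian identity; on generators $(\mu\mu)$ and $(\mu\nu)+(\nu\mu)$, vanishing is immediate from the Lie structure on $\mathcal{L}$. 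Therefore $\Phi$ descends to a unique Leibniz (hence Lie) homomorphism $\Lb(X)/\Id(T)\to\mathcal{L}$ extending $\phi$, and uniqueness follows because the images of $X$ generate $\Lb(X)/\Id(T)$.

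The only mildly subtle step is the first one: making sure that $(xx)=0$ and the metabelian identity are verified for \emph{arbitrary} elements of the quotient rather than just basis monomials. Both reduce cleanly to the basis case by bilinearity once one has antisymmetry, so no serious obstacle arises. I expect this to be a short proof with Lemma \ref{multiplication} doing the computational work in the metabelian check.
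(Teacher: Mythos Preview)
Your proposal is correct and follows essentially the same approach as the paper: the paper's proof also verifies $(ff)=0$ via the same bilinear expansion and pairing argument, then simply declares the remaining properties (Jacobi, metabelianness, universality) ``clear.'' You have filled in those details explicitly, including the use of Lemma~\ref{multiplication} for the metabelian identity and the explicit universal-property check, but the underlying strategy is the same.
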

\begin{proof}For all~$f=\sum_i\alpha_i\mu_i\in \Lb(X)$, where each~$\alpha_i\in\kk,\ \mu_i\in\NF(X)$, we get, in~$\Lb(X)/\Id(T)$,
$$(ff)=\sum_{i}\alpha_i^2(\mu_i\mu_i)+\sum_{i\neq j}\alpha_i\alpha_j((\mu_i\mu_j)+(\mu_j\mu_i))=0.$$
It is clear that~$\Lb(X)/\Id(T)$ is the free metabelian Lie algebra generated by $X$.
\end{proof}

 \begin{lemm}\label{lie-st}
Let~$S=\{[\mu\aa_1\aa_2]_{_L}-[\mu\aa_2\aa_1]_{_L}, [\bb\aa\cc]_{_L}-[\cc\aa\bb]_{_L}+[\cc\bb\aa]_{_L}, [\aa_1\aa_2]_{_L}+[\aa_2\aa_1]_{_L}, [dd]_{_L}\mid {\mu\in \NF(\XX)},\ \aa_1,\aa_2, \aa, \bb, \cc, d\in\XX,\ {\ell(\mu)\geq2},\ \aa_2<\aa_1,\ \cc<\bb<\aa\}$,
and  let~$T$ be as  defined in Lemma \ref{T}.
Then in $\Lb(\XX)$, we have~$\Id(T)=\Id(S)$.
\end{lemm}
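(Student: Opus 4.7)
The plan is to establish both inclusions $\Id(S) \subseteq \Id(T)$ and $\Id(T) \subseteq \Id(S)$ separately, each by showing the generators of one set lie in the ideal generated by the other. The inclusion $\Id(S) \subseteq \Id(T)$ is the short direction. The relations $[dd]_{_L}$ and $[a_1 a_2]_{_L} + [a_2 a_1]_{_L}$ are already in $T$. One application of the Leibniz identity yields $[\mu a_1 a_2]_{_L} - [\mu a_2 a_1]_{_L} = (\mu (a_1 a_2))$, which is a generator of $T$ since $\ell(\mu)\geq 2$ and $\ell((a_1 a_2))=2$. Finally, expanding $[bac]_{_L} - [cab]_{_L} + [cba]_{_L}$ by Leibniz on $((ba)c)$ and then applying the anti-symmetry generators of $T$ to the pairs $\{(ac),(ca)\}$ and $\{(bc),(cb)\}$ reduces the expression, modulo $\Id(T)$, to $-((b(ca)) + ((ca)b))$, which is itself an anti-symmetry generator of $T$.

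For the reverse inclusion $\Id(T) \subseteq \Id(S)$, I would handle the three families of generators of $T$ in order of increasing difficulty. For the metabelian generators $(\mu \nu)$ with $\ell(\mu),\ell(\nu) \geq 2$, I would induct on $\ell(\nu)$: the base $\ell(\nu)=2$ follows from Leibniz, since $(\mu(y_1 y_2))= [\mu y_1 y_2]_{_L} - [\mu y_2 y_1]_{_L}$ is, after possibly reordering, a generator of $S$ (or trivially $0$); the step writes $\nu = (\nu_1 y_n)$ and uses Leibniz to split $(\mu\nu)$ into two pieces each of the inductive form. The squares $(\mu'\mu')$ with $\ell(\mu')=1$ are the generators $[dd]_{_L} \in S$, and for $\ell(\mu') \geq 2$ are absorbed in the previous case.

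The main obstacle is the anti-symmetry family $(\mu'\nu') + (\nu'\mu')$. When both lengths are $\geq 2$, each term already lies in $\Id(S)$ by the metabelian case; when both are $1$, the sum is up to sign a generator of $S$ or twice a square of a generator. The remaining mixed case $\mu' = a \in X$, $\nu' = [y_1 \cdots y_n]_{_L}$ with $n \geq 2$, I would prove by induction on $n$. For the base $n=2$, I would first upgrade the ordered Jacobi relation of $S$, together with generator anti-symmetry and squares, to the unrestricted identity $[xyz]_{_L} + [yzx]_{_L} + [zxy]_{_L} \equiv 0 \pmod{\Id(S)}$ for all $x,y,z \in X$; then, combining this identity on $(a, y_1, y_2)$ with generator anti-symmetry, the expression $[a y_1 y_2]_{_L} - [a y_2 y_1]_{_L} + [y_1 y_2 a]_{_L}$ collapses to $0$ modulo $\Id(S)$. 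For $n \geq 3$, I would expand both $(a\nu')$ and $(\nu' a)$ by Leibniz, apply the inductive hypothesis to $(a\nu_1')+(\nu_1' a)$, and then use generator anti-symmetry on $(a, y_n)$ to reduce the remaining terms to $((y_n a)\nu_1') + (\nu_1' (y_n a))$, which is covered by the metabelian case since $[y_n a]_{_L} \in \NF(X)$ has length $2$ and $\ell(\nu_1') = n-1 \geq 2$. The delicate point is the base case, where the ordered Jacobi in $S$ must be combined with generator anti-symmetry and the tail-commutativity generators of $S$ to cancel all three summands simultaneously.
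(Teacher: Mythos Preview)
Your proposal is correct and follows the same overall architecture as the paper's proof: both inclusions are established by showing generators lie in the other ideal, with the metabelian generators $(\mu\nu)$ handled by induction on $\ell(\nu)$, the squares reduced to these, and the mixed anti-symmetry case $(a\nu')+(\nu'a)$ handled by induction on $\ell(\nu')$ via Leibniz expansion.

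The one noteworthy difference is the base case $n=2$ of the anti-symmetry inclusion. The paper argues by a direct case split on the relative order of $a,b_1,b_2$ (treating $a=b_1$, $a=b_2$, $a<b_2$, $b_1<a$, and $b_2<a<b_1$ separately). Your route---first upgrading the ordered Jacobi generator of $S$ to the unrestricted cyclic identity $[xyz]_{_L}+[yzx]_{_L}+[zxy]_{_L}\equiv 0$ via generator anti-symmetry, then applying it once---is cleaner and avoids the case analysis. Also, in your inductive step for $n\geq 3$, the anti-symmetry move on $(a,y_n)$ is harmless but unnecessary: the terms $((ay_n)\nu_1')$ and $(\nu_1'(y_na))$ are already each a product of two elements of $\NF(X)$ of length $\geq 2$, so they lie in $\Id(S)$ individually by the metabelian case you have already established.
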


\begin{proof} We first show that~$S\subseteq \Id(T)$.
For all~$\mu\in\NF(\XX), \aa,\bb,\cc,d,\aa_1,\aa_2$ belong to~$X$ such that~$\aa_2<\aa_1, \cc<\bb<\aa$ and~$\ell(\mu)\geq2$,
we get~$[\mu\aa_1\aa_2]_{_L}-[\mu\aa_2\aa_1]_{_L}=(\mu(\aa_1\aa_2))\in \Id(T)$. It is also clear that both~$[\aa_1\aa_2]_{_L}+[\aa_2\aa_1]_{_L}$ and~$[dd]_{_L}$ lie in~$\Id(T)$.

Since~$\Lb(X)$ is a Leibniz algebra, we get~$[cab]_{_L}=[cba]_{_L}+(c(ab))$ and thus
\begin{multline*}[\bb\aa\cc]_{_L}-[\cc\aa\bb]_{_L}+[\cc\bb\aa]_{_L}
 = [\bb\aa\cc]_{_L}-[cba]_{_L}-(c(ab))+[\cc\bb\aa]_{_L}\\
 =[\bb\aa\cc]_{_L}-(c(ab))
 =(((\bb\aa)+(ab))\cc)-(((ab)\cc)+(c(ab)) )\in \Id(T).
 \end{multline*}

Now we turn to show that~$T\subseteq \Id(S)$. For all~$\mu,\nu$ in~$\NF(X)$, we assume that~$\mu=[\cc_1\cdots\cc_p]_{_L}, \nu=[\dd_1\cdots\dd_t]_{_L}$, where~$\cc_1,\dots,\cc_p,\dd_1,\dots,\dd_t\in\XX$ and $p, t\geq2$.
We use induction on $t$ to show that~$(\mu\nu)\in \Id(S)$. If $t=2$, then we get
$$([\cc_1\cdots\cc_p]_{_L}[\dd_1\dd_2]_{_L})=[\cc_1\cdots\cc_p\dd_1\dd_2]_{_L}
-[\cc_1\cdots\cc_p\dd_2\dd_1]_{_L}\in \Id(S).$$ For~$t>2$, we get \begin{multline*}([\cc_1\cdots\cc_p]_{_L}[\dd_1\cdots\dd_t]_{_L})
=(([\cc_1\cdots\cc_p]_{_L}[\dd_1\cdots\dd_{t-1}]_{_L})\dd_t)
-(([\cc_1\cdots\cc_p]_{_L}\dd_t)[\dd_1\cdots\dd_{t-1}]_{_L}).
\end{multline*}
By induction hypothesis, both~$([\cc_1\cdots\cc_p\dd_t]_{_L}[\dd_1\cdots\dd_{t-1}]_{_L})$ and~$([\cc_1\cdots\cc_p]_{_L}[\dd_1\cdots\dd_{t-1}]_{_L})$ lie in~$\Id(S)$.
In particular, we get~$(\mu\nu)+(\nu\mu)\in\Id(S)$ and~$(\mu\mu)\in\Id(S)$.

Since~$(dd)$ lies in~$\Id(S)$, where~$d\in\XX$, we get~$(\mu'\mu')\in\Id(S)$ for all~$\mu'$ in~$\NF(X)$. It remains to show that~$(\mu'\nu')+(\nu'\mu')\in\Id(S)$, where $\ell(\mu')=1$. We assume that~$\mu'=\aa$ and~$ \nu'=[\bb_1\bb_2\cdots\bb_n]_{_L}$,
where~$\aa, \bb_1,\dots,\bb_n$ lie in~$\XX$. We use induction on~$n$. For~$n=1$, there is nothing to prove.
For~$n=2$, since both~$(b_1b_2)+(b_2b_1)$ and~$(b_1b_1)$ lie in~$\Id(S)$, we may assume that~$\bb_2<\bb_1$.
By the Leibniz identity, we get
$$(\aa[\bb_1\bb_2]_{_L})+([\bb_1\bb_2]_{_L}\aa)
=[\aa\bb_1\bb_2]_{_L}-[\aa\bb_2\bb_1]_{_L}+[\bb_1\bb_2\aa]_{_L}.$$
So if~$a=b_1$ or~$a=b_2$ or~$a<b_2$ or~$b_1<a$, we immediately get~$(\aa[\bb_1\bb_2]_{_L})+([\bb_1\bb_2]_{_L}\aa)$ in~$\Id(S)$. If $\bb_2<\aa<\bb_1$, then we get
\begin{multline*}
   (\aa[\bb_1\bb_2]_{_L})+([\bb_1\bb_2]_{_L}\aa)
   =[\aa\bb_1\bb_2]_{_L}-[\aa\bb_2\bb_1]_{_L}+[\bb_1\bb_2\aa]_{_L}\\
   =[\aa\bb_1\bb_2]_{_L}+[\bb_2\aa\bb_1]_{_L}-[\bb_2\bb_1\aa]_{_L}-([\aa\bb_2\bb_1]_{_L}+[\bb_2\aa\bb_1]_{_L})
   +([\bb_1\bb_2\aa]_{_L}+[\bb_2\bb_1\aa]_{_L})\in \Id(S).
\end{multline*}
  If $n>2$, then by induction hypothesis, we get
\begin{align*}
(\mu'\nu')+(\nu'\mu')
&=(\aa[\bb_1\bb_2\cdots\bb_n]_{_L})+[\bb_1\bb_2\cdots\bb_n\aa]_{_L}\\
&=((\aa[\bb_1\bb_2\cdots\bb_{n-1}]_{_L})\bb_n)-((\aa\bb_n)[\bb_1\bb_2\cdots\bb_{n-1}]_{_L})+[\bb_1\bb_2\cdots\bb_n\aa]_{_L}\\
&=((\aa[\bb_1\bb_2\cdots\bb_{n-1}]_{_L})\bb_n)+(([\bb_1\bb_2\cdots\bb_{n-1}]_{_L}\aa)\bb_n)\\
&\ \ \ +([\bb_1\bb_2\cdots\bb_{n-1}]_{_L}(\bb_n\aa))-((\aa\bb_n)[\bb_1\bb_2\cdots\bb_{n-1}]_{_L}) \in\Id(S).
\end{align*}

 Therefore, we get~$\Id(S)=\Id(T)$.
\end{proof}

\begin{lemm}\label{ell22}
  Let $S$ be defined as in Lemma \ref{lie-st}. If~$\nu, [\aa_1\cdots\aa_p]_{_L}$ lie in $\NF(\XX),$ and $\ell(\nu), p\geq2 $, then in $\Lb(\XX)$ we have
  $ (\nu[\aa_1\cdots\aa_p]_{_L})=0$, or
 $
 (\nu[\aa_1\cdots\aa_p]_{_L})=\sum \beta_j h_{j,s_j},
 $
  where each~$\beta_j\in\kk$, each~$h_{j,s_j}$ is a normal $S$-polynomial and for each~$j$ satisfying~$\beta_j\neq 0$, we have~$\deg(\overline{h_{j,s_j}})\leq\deg(\nu)+\deg([\aa_1\cdots\aa_p]_{_L})$.
\end{lemm}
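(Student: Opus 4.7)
The plan is to proceed by induction on $p$, following the template of Lemma~\ref{ell2} but with the Lie-algebra generating set $S$ from Lemma~\ref{lie-st}.

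For the base case $p=2$, the Leibniz identity gives
$$(\nu(a_1 a_2)) = ((\nu a_1)a_2) - ((\nu a_2)a_1) = [\nu a_1 a_2]_L - [\nu a_2 a_1]_L.$$
If $a_1 = a_2$ this is identically zero in $\Lb(X)$, matching the first alternative of the conclusion. Otherwise exactly one of $a_1 < a_2$ or $a_2 < a_1$ holds, and after possibly extracting a global sign we reduce to $a_2 < a_1$; then the right-hand side is exactly the defining relation $[\mu a_1 a_2]_L - [\mu a_2 a_1]_L \in S$ with $\mu = \nu$ of length $\geq 2$, and so it is itself a normal $S$-polynomial (take $h_s = s$, i.e.\ $m=0$ in the definition).

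For the inductive step $p > 2$, I would expand via Leibniz
$$(\nu[a_1 \cdots a_p]_L) = ((\nu[a_1 \cdots a_{p-1}]_L)a_p) - ((\nu a_p)[a_1 \cdots a_{p-1}]_L).$$
To the first summand I apply the induction hypothesis to $(\nu[a_1 \cdots a_{p-1}]_L) = \sum_i \beta'_i h'_{i,s'_i}$; right-multiplying by $a_p$ preserves normality by Remark~\ref{hsa}, and Lemma~\ref{remark}(iii) pins down the leading monomial so the degree grows by exactly one. To the second summand, since $(\nu a_p) \in \NF(X)$ has length $\ell(\nu)+1 \geq 3 \geq 2$, the induction hypothesis applies directly to $((\nu a_p)[a_1 \cdots a_{p-1}]_L)$, as the right-hand factor has length $p-1 < p$. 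Summing the two contributions gives the desired expression as a $k$-linear combination of normal $S$-polynomials.

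The only bookkeeping is the degree bound $\deg(\overline{h_{j,s_j}}) \leq \deg(\nu) + \deg([a_1 \cdots a_p]_L)$, which is automatic because Leibniz expansion, right-multiplication by a single generator, and each invocation of the induction hypothesis are degree-preserving. I do not anticipate any substantive obstacle: this argument is in fact cleaner than Lemma~\ref{ell2}, since the relation $[dd]_L \in S$ together with the Leibniz identity absorbs the $a_1=a_2$ case uniformly and removes the characteristic dichotomy that Lemma~\ref{ell2} must split on.
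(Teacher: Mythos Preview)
Your proposal is correct and follows essentially the same route as the paper: induction on $p$, with the base case $p=2$ handled by the Leibniz identity (the paper simply says ``the result holds clearly''), and the inductive step by the same Leibniz expansion into $((\nu[a_1\cdots a_{p-1}]_L)a_p) - ((\nu a_p)[a_1\cdots a_{p-1}]_L)$, applying the induction hypothesis to each piece and using Remark~\ref{hsa} for the first. Your write-up is in fact more detailed than the paper's; the only inessential remark is that the $a_1=a_2$ base case vanishes directly from the Leibniz identity in $\Lb(X)$ and does not actually require the relation $[dd]_L\in S$.
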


\begin{proof}
   Assume $(\nu[\aa_1\cdots\aa_p]_{_L})\neq0$.
   We use induction on $p$. Assuming $p=2$,  the result holds clearly.
    If $p>2$, then by induction hypothesis, we get
\begin{multline*}
  (\nu[\aa_1\cdots\aa_p]_{_L})
=((\nu[\aa_1\cdots\aa_{p-1}]_{_L})\aa_p)-((\nu\aa_p)[\aa_1\cdots\aa_{p-1}]_{_L})
=\sum\beta_i' (h'_{i,s_i'}\aa_p)- \sum \beta''_j h''_{j,s_j''},
\end{multline*}
where $\beta_i',\beta''_j\in\kk$, and~$(h'_{i,s_i'}\aa_p)$, $h''_{j,s_j''}$ are normal $S$-polynomials with~$\deg(\overline{(h'_{i,s_i'}\aa_p)})$ and $\deg(\overline{h''_{j,s_j''}})$ are smaller than or equal to $\deg(\nu)+\deg([\aa_1\cdots\aa_p]_{_L})$.
\end{proof}

By the above lemma and Theorem~\ref{cd-lemma for Lb}, we  can construct  alternative linear bases of free metabelian Lie algebras.
\begin{theorem}\label{cor4.13}
Let $\MLie(X)$ be the free metabelian Lie algebra generated by a well-ordered set~$X$.
Then the set
$S=\{[\mu\aa_1\aa_2]_{_L}-[\mu\aa_2\aa_1]_{_L}, [\bb\aa\cc]_{_L}-[\cc\aa\bb]_{_L}+[\cc\bb\aa]_{_L}, [\aa_1\aa_2]_{_L}+[\aa_2\aa_1]_{_L}, [dd]_{_L}\mid {\mu\in \NF(\XX)},\ \aa_1,\aa_2, \aa, \bb, \cc, d\in\XX,\ {\ell(\mu)\geq2},\ \aa_2<\aa_1,\ \cc<\bb<\aa\}$
is a \gsb\ in~$\Lb(\XX)$, and the following set forms a linear basis of $\MLie(X)$:
$$
{\{[\aa_1\aa_2\cdots\aa_n]_{_L}\mid\aa_1,\dots,\aa_n\in\XX,\ n\geq1,\ \aa_1<\aa_2,\ \aa_1\leq \aa_3\leq\aa_4\leq\cdots\leq\aa_n\}}.
$$

\end{theorem}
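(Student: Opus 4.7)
The plan is to apply Theorem \ref{cd-lemma for Lb}: show that $S$ is a Gr\"obner--Shirshov basis in $\Lb(X)$, then invoke Lemma \ref{lie-st} to identify $\Lb(X)/\Id(S) = \MLie(X)$, and read off the linear basis as $\Irr(S)$. I first identify the leading monomials under the deg-length-lex order of Definition \ref{order}: we have $\overline{[\mu\aa_1\aa_2]_{_L}-[\mu\aa_2\aa_1]_{_L}} = [\mu\aa_1\aa_2]_{_L}$ since $\aa_2<\aa_1$ compares later in the lex tuple of coordinates, and $\overline{[\bb\aa\cc]_{_L}-[\cc\aa\bb]_{_L}+[\cc\bb\aa]_{_L}} = [\bb\aa\cc]_{_L}$ because the first-letter comparison $\bb>\cc$ already dominates; the type-$3$ and type-$4$ leading monomials are immediate. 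Since every leading monomial has length at least $2$, normal $S$-polynomials are precisely the expressions $[g\bb_1\cdots\bb_m]_{_L}$ for $g\in S$, $\bb_i\in X$, $m\ge 0$, with leading monomial $[\bar g\bb_1\cdots\bb_m]_{_L}$.

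For left multiplication compositions $(\nu g)$, when $\ell(\nu)\ge 2$ Lemma \ref{ell22} delivers the conclusion termwise. When $\nu = c\in X$, for $g$ of type $3$ or $4$ one application of the Leibniz identity $x(yz)=(xy)z-(xz)y$ makes $(cg)$ collapse to zero; for $g$ of type $2$, I expand $(cg)$ into six length-$4$ left-normed monomials plus three products of two length-$2$ factors, where the latter are trivial by Lemma \ref{ell22} and the six monomials pair up into three type-$1$ elements of $S$ (each a swap of the last two letters after a fixed two-letter prefix $c\bb$, $c\aa$, or $c\cc$); for $g$ of type $1$, I iterate Leibniz to obtain the main term $[(cy)\aa_1\aa_2]_{_L} - [(cy)\aa_2\aa_1]_{_L}$ with $y=[\cc_1\cdots\cc_p]_{_L}$ plus factored remainders of the form $[\mu\aa_i(y\aa_j)]_{_L}$ or $((\mu\aa_j)(y\aa_i))$, then resolve $cy$ via Lemma \ref{multiplication} so that the main term becomes a sum of type-$1$ elements of $S$, while Lemma \ref{ell22} together with Remark \ref{hsa} make each remainder trivial modulo $(S,\deg(c)+\deg(\bar g))$.

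For inclusion compositions $(f,g)_{\bar f}$ with $\bar f = [\bar g\bb_1\cdots\bb_m]_{_L}$ and $m\ge 1$, $\bar f$ must be of type $1$ or type $2$ since types $3$ and $4$ have length $2$ and no element of $S$ has a length-$1$ leading monomial. For $\bar f = [\bb\aa\cc]_{_L}$ of type $2$ the only length-$2$ prefix $[\bb\aa]_{_L}$ is neither type $3$ (which would require $\aa<\bb$) nor type $4$ (which would require $\bb=\aa$), so no composition arises. In the remaining case $\bar f=[\cc_1\cdots\cc_p\aa_1\aa_2]_{_L}$ of type $1$, I split by $k=\ell(\bar g)$: for $k\in\{4,\dots,p\}$ with $\bar g$ type $1$, or $k=p+1$ with $\bar g$ type $1$ and $\aa_1<\cc_p$, the computation is identical to the proof of Theorem \ref{th4.7} parts \ITEM1 and \ITEM2; for $k=3$ with $\bar g$ type $2$, I use the Jacobi-like relation to rewrite $[\cc_1\cc_2\cc_3]_{_L}$ as two monomials with strictly smaller first letter and then sort the suffix via the metabelian relation; for $k=2$ with $\bar g$ type $3$ or $4$, I antisymmetrize (respectively annihilate) the first two letters and sort the last two via the metabelian relation. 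In every sub-case the composition telescopes to a combination of normal $S$-polynomials with leading monomials strictly below $\bar f$.

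With $S$ verified as a Gr\"obner--Shirshov basis, Theorem \ref{cd-lemma for Lb} yields $\Irr(S)$ as a linear basis of $\MLie(X)$; its defining conditions translate immediately: excluding type-$3$ and type-$4$ leading monomials at positions $1,2$ forces $\aa_1<\aa_2$, excluding type-$1$ at positions $\ge 3$ forces $\aa_3\le\aa_4\le\cdots\le\aa_n$, and excluding type-$2$ at positions $1,2,3$ forbids $\aa_3<\aa_1<\aa_2$, which together with $\aa_1<\aa_2$ reduces to $\aa_1\le\aa_3$; these conditions describe exactly the stated basis. The main obstacle is the inclusion sub-case in which a type-$2$ $\bar g$ sits as the length-$3$ prefix of a type-$1$ $\bar f$: the three-term Jacobi-like expansion produces three monomials, each of which must be brought into normal form through successive applications of the metabelian relation, and telescoping the full expression modulo $(S,\bar f)$ requires careful bookkeeping of the resulting twelve-odd monomials.
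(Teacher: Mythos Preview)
Your approach is correct and follows the same architecture as the paper's proof: verify left multiplication compositions, then inclusion compositions, then read off $\Irr(S)$ via Theorem~\ref{cd-lemma for Lb} and Lemma~\ref{lie-st}. Two minor organizational differences are worth noting. First, for the left multiplication composition with $g$ of type~2, the paper expands $(\mu g)$ fully into twelve left-normed monomials (for arbitrary $\mu$, not just $\mu\in X$) and observes they cancel \emph{identically} to zero; this is slightly cleaner than your route of pairing six monomials into type-1 elements and invoking Lemma~\ref{ell22} on the remaining three products, though both are valid. Second, for the inclusion case ``$k=3$ with $\bar g$ of type~2'' the paper splits into two sub-cases: $p\ge 3$ (the type-2 prefix is $[\cc_1\cc_2\cc_3]_{_L}$, entirely inside $\mu$) and $p=2$ (the type-2 prefix is $[\cc_1\cc_2\aa_1]_{_L}$, overlapping $\aa_1$). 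These are genuinely different computations---the $p=2$ case is the one requiring the ``careful bookkeeping'' you flag as the main obstacle---so when you carry out the details you will want to treat them separately, exactly as the paper does with its $g_3$ and $g_4$.
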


\begin{proof}

First we prove that all the left
multiplication compositions in $S$ are trivial.
Let $\mu\in \NF(\XX), \cc_1,\dots,\cc_p,\aa_1,\aa_2,\aa,\bb,\cc,d\in \XX,\ \aa_2<\aa_1,\ \cc<\bb<\aa$ and $p\geq2$. Then we have the followings.

(i) $(\mu[dd]_{_L})=[\mu dd]_{_L}-[\mu dd]_{_L}= 0$.

(ii) $(\mu([\aa_1\aa_2]_{_L}+[\aa_2\aa_1]_{_L}))
=[\mu\aa_1\aa_2]_{_L}-[\mu\aa_2\aa_1]_{_L}+[\mu\aa_2\aa_1]_{_L}-[\mu\aa_1\aa_2]_{_L}
= 0.$\begin{align*}
(iii)&\  \ \ (\mu([\bb\aa\cc]_{_L}-[\cc\aa\bb]_{_L}+[\cc\bb\aa]_{_L}))\\
&=((\mu[\bb\aa]_{_L})\cc)-((\mu\cc)[\bb\aa]_{_L})-((\mu[\cc\aa]_{_L})\bb)+((\mu\bb)[\cc\aa]_{_L})+((\mu[\cc\bb]_{_L})\aa)-((\mu\aa)[\cc\bb]_{_L})\\
&=[\mu\bb\aa\cc]_{_L}-[\mu\aa\bb\cc]_{_L}-[\mu\cc\bb\aa]_{_L}+[\mu\cc\aa\bb]_{_L}-[\mu\cc\aa\bb]_{_L}
 +[\mu\aa\cc\bb]_{_L}
 +[\mu\bb\cc\aa]_{_L}-[\mu\bb\aa\cc]_{_L}\\
&\ \ \  +[\mu\cc\bb\aa]_{_L}-[\mu\bb\cc\aa]_{_L}-[\mu\aa\cc\bb]_{_L}
 +[\mu\aa\bb\cc]_{_L}
= 0 .
\end{align*}

The following result holds by using Lemmas \ref{ell22} and \ref{multiplication}.
\begin{align*}
(iv)\ &\ \ \  (\mu([\cc_1\cdots\cc_p\aa_1\aa_2]_{_L}-[\cc_1\cdots\cc_p\aa_2\aa_1]_{_L}))\\
&=((\mu([\cc_1\cdots\cc_p]_{_L}\aa_1))\aa_2)-((\mu\aa_2)([\cc_1\cdots\cc_p]_{_L}\aa_1))
-((\mu([\cc_1\cdots\cc_p]_{_L}\aa_2))\aa_1)+((\mu\aa_1)([\cc_1\cdots\cc_p]_{_L}\aa_2))\\
&=[\mu[\cc_1\cdots\cc_p]_{_L}\aa_1\aa_2]_{_L}-[\mu\aa_1[\cc_1\cdots\cc_p]_{_L}\aa_2]_{_L} -[\mu\aa_2[\cc_1\cdots\cc_p]_{_L}\aa_1]_{_L}+[\mu\aa_2\aa_1[\cc_1\cdots\cc_p]_{_L}]_{_L}\\
&\ \ \  -[\mu[\cc_1\cdots\cc_p]_{_L}\aa_2\aa_1]_{_L}+[\mu\aa_2[\cc_1\cdots\cc_p]_{_L}\aa_1]_{_L} +[\mu\aa_1[\cc_1\cdots\cc_p]_{_L}\aa_2]_{_L}- [\mu\aa_1\aa_2[\cc_1\cdots\cc_p]_{_L}]_{_L}\\
&\equiv ((\mu([\cc_1\cdots\cc_p]_{_L}\aa_1))\aa_2)-[\mu[\cc_1\cdots\cc_p]_{_L}\aa_2\aa_1]_{_L}\\
&\equiv\sum\alpha_i [[\mu\xx_{i_1}\cdots\xx_{i_p}]_{_L}\aa_1\aa_2]_{_L}
-\sum\alpha_i [[\mu\xx_{i_1}\cdots\xx_{i_p}]_{_L}\aa_2\aa_1]_{_L}\\
&\equiv 0 \mod (S,\deg(\mu)+\deg([\cc_1\cdots c_p\aa_1\aa_2]_{_L})),
\end{align*}
where each~$(\xx_{i_1},\dots,\xx_{i_p})$ is a permutation of $(\cc_1,\dots,\cc_p)$.

Next we show that all the inclusion compositions are trivial. All possible inclusion compositions $(f,g_i)_{\ov{f}}$ with $1\leq i\leq 6$ where
$$\begin{array}{llll}
\ov{f}=[\cc_1\cdots\cc_p\aa_1\aa_2]_{_L},& \aa_2<\aa_1,\ 2\leq p;&\ \ \ \ \ \ \ \ \ \ \ \ \ \ \ \ \ \ \ &\\
\ov{g_1}=[\cc_1\cc_2]_{_L},& \cc_2<\cc_1;
&\ov{g_2}=[\cc_1\cc_2]_{_L},& \cc_1=\cc_2;\\
\ov{g_3}=[\cc_1\cc_2\cc_3]_{_L},& p\geq 3,\ \cc_3<\cc_1<\cc_2;
&\ov{g_4}=[\cc_1\cc_2\aa_1]_{_L},& p=2,\ \aa_1<\cc_1<\cc_2;\\
\ov{g_5}=[\cc_1\cdots\cc_t]_{_L},& 4\leq t\leq p,\ \cc_t<\cc_{t-1};
&\ov{g_6}=[\cc_1\cdots\cc_p\aa_1]_{_L},& 3\leq p,\ \aa_1<\cc_p,
\end{array}$$
for all $\cc_1,\dots,\cc_p,\aa_1,\aa_2\in \XX$. We get
\begin{align*}
(\ff, \gg_1)_{\bar{\ff}}
&=-[\cc_1\cdots\cc_p\aa_2\aa_1]_{_L}-[\cc_2\cc_1\cdots\cc_p\aa_1\aa_2]_{_L}\\
&\equiv[\cc_2\cc_1\cdots\cc_p\aa_2\aa_1]_{_L}-[\cc_2\cc_1\cdots\cc_p\aa_2\aa_1]_{_L}
\equiv  0 \mod(S,\overline{\ff});\\
(\ff, \gg_2)_{\bar{\ff}}
&=-[\cc_1\cc_2\cdots\cc_p\aa_2\aa_1]_{_L}
\equiv  0\mod(S,\overline{\ff});\\
(\ff, \gg_3)_{\bar{\ff}}
&=-[\cc_1\cdots\cc_p\aa_2\aa_1]_{_L}
+[\cc_3\cc_2\cc_1\cdots\cc_p\aa_1\aa_2]_{_L}-[\cc_3\cc_1\cc_2\cdots\cc_p\aa_1\aa_2]_{_L}\\
&\equiv -[\cc_3\cc_2\cc_1\cdots\cc_p\aa_1\aa_2]_{_L}+[\cc_3\cc_1\cc_2\cdots\cc_p\aa_1\aa_2]_{_L}
+[\cc_3\cc_2\cc_1\cdots\cc_p\aa_2\aa_1]_{_L}-[\cc_3\cc_1\cc_2\cdots\cc_p\aa_2\aa_1]_{_L}\\
&\equiv  0\mod(S,\overline{\ff}).
\end{align*}

Assume that~$\ff=[\cc_1\cc_2\aa_1\aa_2]_{_L}-[\cc_1\cc_2\aa_2\aa_1]$ and~$ \gg_4=[\cc_1\cc_2\aa_1]_{_L}-[\aa_1\cc_2\cc_1]_{_L}+[\aa_1\cc_1\cc_2]_{_L}$, where $\aa_2<\aa_1<\cc_1<\cc_2$. Then
\begin{align*}
(\ff, \gg_4)_{\bar{\ff}}
&=-[\cc_1\cc_2\aa_2\aa_1]_{_L}+[\aa_1\cc_2\cc_1\aa_2]_{_L}-[\aa_1\cc_1\cc_2\aa_2]_{_L}\\
&\equiv-[\aa_2\cc_2\cc_1\aa_1]_{_L}+[\aa_2\cc_1\cc_2\aa_1]_{_L}+[\aa_1\cc_2\aa_2\cc_1]_{_L}-[\aa_1\cc_1\aa_2\cc_2]_{_L} \\
&\equiv  -[\aa_2\cc_2\aa_1\cc_1]_{_L}+[\aa_2\cc_1\cc_2\aa_1]_{_L}+[\aa_2\cc_2\aa_1\cc_1]_{_L}\\
&\ \ \ -[\aa_2\aa_1\cc_2\cc_1]_{_L}-[\aa_2\cc_1\aa_1\cc_2]_{_L}+[\aa_2\aa_1\cc_1\cc_2]_{_L}
\equiv 0\mod(S,\overline{\ff}).
\end{align*}

 Suppose that $f=[\cc_1\cdots\cc_p\aa_1\aa_2]_{_L}-[\cc_1\cdots\cc_p\aa_2\aa_1]_{_L}$, where $\cc_1,\dots,\cc_p,\aa_1,\aa_2\in\XX,\ \aa_2<\aa_1$ and $p\geq2$.
 Assuming $\ov{g_5}=[\cc_1\cdots\cc_t]_{_L}$ such that $4\leq t\leq p$ and $\cc_t<\cc_{t-1}$. Then we get
$$[\cc_1\cdots\cc_p\aa_2\aa_1]_{_L}\equiv[\cc_1\cdots\cc_t\cc_{t-1}\cdots\cc_p\aa_1\aa_2]_{_L}\equiv
[\cc_1\cdots\cc_t\cc_{t-1}\cdots\cc_p\aa_2\aa_1]_{_L}\mod (S, \bar{\ff}).$$
Thus
$$(\ff,\gg_5)_{\bar{\ff}}
=-[\cc_1\cdots\cc_p\aa_2\aa_1]_{_L}
+[\cc_1\cdots\cc_t\cc_{t-1}\cdots\cc_p\aa_1\aa_2]_{_L}
\equiv0 \mod (S, \bar{\ff}).
$$

Assume $\ov{g_6}=[\cc_1\cdots\cc_p\aa_1]_{_L}$ such that~$\aa_1<\cc_p$ and $p\geq 3$. Then we get~$\aa_2<\aa_1<c_p$ and
$[\cc_1\cdots\cc_p\aa_2\aa_1]_{_L}
\equiv
[\cc_1\cdots\cc_{p-1}\aa_2\aa_1\cc_p]_{_L}
\equiv[\cc_1\cdots\cc_{p-1}\aa_1\cc_p\aa_2]_{_L}\mod (S, \bar{\ff})$.
Thus
$$(\ff,\gg_6)_{\bar{\ff}}
=-[\cc_1\cdots\cc_p\aa_2\aa_1]_{_L}
+[\cc_1\cdots\cc_{p-1}\aa_1\cc_p\aa_2]_{_L}
\equiv0 \mod (S, \bar{\ff}).
$$

So~$S$ is a \gsb\ in~$\Lb(\XX)$.
Therefore, by Theorem~\ref{cd-lemma for Lb}, the following set forms a linear basis of~$\MLie(X)$:
$$
\Irr(S)={\{[\aa_1\aa_2\cdots\aa_n]_{_L}\mid\aa_1,\dots,\aa_n\in\XX,\ n\geq1,\ \aa_1<\aa_2,\ \aa_1\leq \aa_3\leq\aa_4\leq\cdots\leq\aa_n\}}.
$$
\end{proof}

In Theorem \ref{cor4.13}, if we apply~$(a_1\aa_2)=-(a_2a_1)$, then we get a set
 $$\{[\aa_1\aa_2\cdots\aa_n]_{_L}\mid\aa_1,\dots,\aa_n\in\XX,\ n\geq1,\  \aa_2<\aa_1,\ \aa_2\leq \aa_3\leq\aa_4\leq\cdots\leq\aa_n\},$$ which is exactly the linear basis of $\MLie(X)$ constructed in~\cite{mlie}.

\section{A characterization of extensions of Leibniz superalgebras}
Let $\mathfrak{A},\mathfrak{B}$ be Leibniz (super)algebras over a  field $k$. In this subsection, by using  Gr\"{o}bner-Shirshov bases theory for Leibniz (super)algebras, we give a complete characterization of   extensions  of   $\mathfrak{B}$ by   $\mathfrak{A}$, where  $\mathfrak{B}$ is presented by generators and relations.

Loday \cite{loday} has already given the definition of a module over Leibniz algebra. Here we extend it to  a definition of a  supermodule over Leibniz superalgebra.

\begin{defi}
Let $ \mathfrak{B}= \mathfrak{B}_0 \oplus\mathfrak{B}_1$ be a Leibniz superalgebra over a field $k$. A supermodule $\mathfrak{A}$ over $\mathfrak{B}$ is a direct sum decomposition $\mathfrak{A}=\mathfrak{A}_0\oplus \mathfrak{A}_1$ (as vector spaces) with two $k$-bilinear multiplications (we use only one `` $\cdot$ " to represent two supermodule operations as long as it will not cause any ambiguity):
\begin{equation}\label{algmod1}
\mathfrak{B} \times \mathfrak{A}\longrightarrow \mathfrak{A},  \ (x, f)\mapsto x\cdot f
\end{equation}
\begin{equation}\label{algmod2}
\mathfrak{A} \times \mathfrak{B}\longrightarrow \mathfrak{A}, \ (f, x)\mapsto f\cdot x
\end{equation}
satisfying
$$\mathfrak{A}_i\cdot \mathfrak{B}_j,\ \mathfrak{B}_i\cdot \mathfrak{A}_j \subseteq \mathfrak{A}_{i+j}\ \mbox{for all}\ i , j \ \mbox{in}\ \mathbb{Z}_2,$$
and the following three axioms,
\begin{align*}
f\cdot (xy)&=(f\cdot x)\cdot y-(-1)^{|x||y|}(f\cdot y)\cdot x,\\
x\cdot (y\cdot f)&=(xy)\cdot f-(-1)^{|f||y|}(x\cdot f)\cdot y,\\
x\cdot (f\cdot y)&=(x\cdot f)\cdot y-(-1)^{|f||y|}(xy)\cdot f,
\end{align*}
for all $f\in \mathfrak{A}_0\cup \mathfrak{A}_1$ and $x,y\in \mathfrak{B}_0\cup\mathfrak{B}_1$.

 Suppose that $ \mathfrak{A}= \mathfrak{A}_0 \oplus\mathfrak{A}_1$ and $ \mathfrak{B}= \mathfrak{B}_0 \oplus\mathfrak{B}_1$ are two  Leibniz superalgebras. Then $ \mathfrak{A}$  is   called a  compatible~$ \mathfrak{B}$-supermodule if $ \mathfrak{A}$  is a~$ \mathfrak{B}$-supermodule
  and satisfies the following three axioms, for all~$f,f'\in \mathfrak{A}_0\cup\mathfrak{A}_1 , x\in\mathfrak{B}_0\cup\mathfrak{B}_1$,
\begin{align*}
x\cdot (\ff\ff')&=(x\cdot \ff)\cdot \ff'-(-1)^{|\ff||\ff'|}(x\cdot \ff')\cdot \ff,\\
\ff\cdot (\ff'\cdot x)&=(\ff\ff')\cdot x-(-1)^{|x||\ff'|}(\ff\cdot x)\cdot \ff',\\
\ff\cdot (x\cdot \ff')&=(\ff\cdot x)\cdot \ff'-(-1)^{|x||\ff'|}(\ff\ff')\cdot x.
\end{align*}
\end{defi}

For example, let~$\mA, \mathcal{B}$ be two Leibniz superalgebras. Then clearly $\mA$ is a compatible~$\mA$-supermodule.  And
if we define  supermodule operations: $\ff\cdot x=x\cdot \ff=0$, for all~$f\in \mA, x\in\mathcal{B}$,  then~$\mA$ is a compatible~$\mathcal{B}$-supermodule.

\begin{rema}
Assume that $ (\mathfrak{A}, (--))$ and ~$ (\mathfrak{B}, (--))$ are two Leibniz superalgebras. Then with the $k$-bilinear operations (\ref{algmod1}) and (\ref{algmod2}), $\mathfrak{A}$  is a  compatible~$\mathfrak{B}$-supermodule if and only if the following identity holds:
$$
(x\cdot(y\cdot z)) =((x\cdot y)\cdot z)-(-1)^{|z||y|}((x\cdot z)\cdot y), \ \mbox{for all}\  x,y,z\in \mathfrak{A}_0\cup\mathfrak{A}_1\cup\mathfrak{B}_0\cup\mathfrak{B}_1,
$$
where $x'\cdot y'$ means the product~$(x'y')$ of $\mathfrak{A}$ (resp. $\mathfrak{B}$)  if~$x',y'\in\mathfrak{A}$  (resp. $x',y'\in\mathfrak{B}$).

\end{rema}

Let $(\mathfrak{A}=\mathfrak{A}_0\oplus\mathfrak{A}_1, (--))$  be  a Leibniz superalgebra with  a well-ordered linear basis
$A=A_0\cup A_1$, where~$A_i$ is a linear basis of~$\mathfrak{A}_i$, $i=0,1$, and with the    multiplication table:
$$(aa')=\{a\cdot a'\},\ \ a, a'\in A,$$  where
$$
\{a\cdot a'\}:=\{(a a')\}
$$ is the product in~$\mA$; in particular, it is a linear combination of elements in $A_0$ (resp. $ A_1$), if~$(aa')\in \mathfrak{A}_0$ (resp. $(aa')\in \mathfrak{A}_1$).

Define $\deg(\aa)=1$  for all~$\aa\in A$. Then the set $\{(aa')-\{a\cdot a'\}\mid a, a'\in A\}$ is clearly a \gsb\ in $\Lbs( A)$.  Thus by Theorem \ref{cd-lemma for Lb}, $\mathfrak{A}$ has a presentation
 $$\mathfrak{A}=\Lbs( A\mid (aa')-\{a\cdot a'\}, a, a'\in A).$$
Let $\mathfrak{B}=\Lbs( B| R)$ be  a Leibniz  superalgebra
generated by a well-ordered set $B=B_0\cup B_1$  with defining relations $R$, where $R$ is a homogenous subset of $\Lbs(B)$. Define $\deg(\bb)=1$  for all~$\bb\in B$.

 Let $<$ be an  order on $A\cup B $ such that~${a<b}$  for all~$a\in A, b\in B$. It is clear that $<$ is a well
order on $A\cup B $. We extend~$<$ to the deg-length-lex order on $\NF(A\cup B)$ as Definition~\ref{order}. Then $<$  restricted  on~$\NF(A)$ or $\NF(B)$ is the deg-length-lex order on the corresponding set.

Let   $\lfloor\  \rfloor : R \rightarrow \mathfrak{A}_0\cup\mathfrak{A}_1,\ f\mapsto \lfloor f\rfloor$ be a map satisfying $|f|=|\lfloor f\rfloor|$, for all $f\in R$. Then we call $\lfloor\  \rfloor$ a \emph{factor set} of $\mathfrak{B}$ in $\mathfrak{A}$.

Suppose that $\mathfrak{A}$ is a  compatible $\Lbs( B)$-supermodule with the supermodule operation $``\cdot"$.
Then for all~$a\in A,\ \nu\in \NF(B),$
$$\{a\cdot\nu\}:=a\cdot\nu\ \mbox{ and}\  \ \{\nu\cdot a\}:=\nu\cdot a$$
are linear combinations of elements in~$A$.
Consider the following homogeneous polynomials in $\Lbs( A\cup B)$:
$$
\Omega _{aa'}:= (aa')- \{a\cdot a'\},\
\Upsilon_{_{ab}}:=(ab)-\{a\cdot b\},\
\Gamma _{\nu a}:=(\nu a)-\{\nu\cdot a\},\
\Theta _{_f}:=f-\lfloor f\rfloor,
$$
where~$a,a'\in A,\ b\in B,\ \nu\in \NF(B),\ f\in R$. Noting that~$\{-\cdot -\}$ is bilinear, if~${h=\sum\beta_j a_j\in\mathfrak{A}}$, $q=\sum\alpha_i \mu_i\in\Lbs(B)$, where ${\alpha_i, \beta_j\in\kk}$, $a_j\in A,\ \mu_i\in \NF(B)$, then we extend the above notation to
$$
\Omega_{ha}:=\sum\beta_j \Omega_{a_j a},\ \Omega_{ah}:=\sum\beta_j \Omega_{ a a_j},\ \Upsilon_{_{hb}}:=\sum\beta_j \Upsilon_{_{a_j b}},\ \Gamma_{\nu h}:=\sum\beta_j \Gamma_{\nu a_j},\ \Gamma_{ q a}:=\sum\alpha_i \Gamma_{\mu_i a},
$$
$$
\{h\cdot q\}:=\sum_{i,j}\beta_j\alpha_i \{a_j\cdot \mu_i \},\ \{q\cdot h\}:=\sum_{i,j}\alpha_i\beta_j \{\mu_i \cdot a_j \},\ \{h\cdot (\sum\beta'_ta_t)\}:=\sum_{j,t}\beta_j\beta'_t \{a_j\cdot a_t \},
$$
where~$\beta'_t\in \kk,\ a_t,a\in A,\ b\in B,\ \nu\in\NF(B)$.
Denote by
\begin{align*}
 &R_{(\lfloor\  \rfloor, \cdot)}:=\{\Omega _{aa'},\ \Upsilon_{_{ab}}, \ \Gamma _{\nu a},\ \Theta _{_f}\mid \ a,a'\in A,\ b\in B,\ \nu\in \NF(B),\ f\in R \},\\
 &R'_{(\lfloor\  \rfloor, \cdot)}:=\{\Omega _{aa'},\ \Upsilon_{_{ab}}, \ \Gamma _{b a},\ \Theta _{_f}\mid \ a,a'\in A,\ b\in B,\  f\in R \},\\
&\ \ \ \ \ \ \  E_{ (\mathfrak{A}, \mathfrak{B},\lfloor\  \rfloor,\cdot)}:=\Lbs(A\cup B\mid R_{(\lfloor\  \rfloor, \cdot)}).
 \end{align*}

\begin{rema} The subset  $R_{(\lfloor\  \rfloor, \cdot)}$ of $\Lbs(A\cup B)$ depends on factor set~$\lfloor\  \rfloor$ and supermodule operation~$``\cdot" $. A normal $R'_{(\lfloor\  \rfloor, \cdot)}$-polynomial in $\Lbs(A\cup B)$ is also a normal $R_{(\lfloor\  \rfloor, \cdot)}$-polynomial.
\end{rema}

\begin{lemm} \label{le2.2}
Assume $\mathfrak{B}=\Lbs(B| R)$ is  a Leibniz superalgebra
generated by a set $B$ with defining relations $R$,
where $R$  is a reduced Gr\"{o}bner-Shirshov basis in
$\Lbs(B)$ and the length of leading monomial of each polynomial in $R$ is greater than 1.
In $\Lbs(B)$, if $0\neq h\in\Id(R)$, then there exist unique $t\in\mathbb{Z}_{>0}$, normal $R$-polynomial $h_{i,s_i}$ and $0\neq \alpha_i\in\kk$, with $1\leq i\leq t$, such that $h=\sum\alpha_i h_{i,s_i}$, where $\ov{h_{t,s_t}}<\cdots<\ov{h_{2,s_2}}<\ov{h_{1,s_1}}=\ov{h}$, i.e., $h$ has a unique expression $h=h_{_R}:=\sum\alpha_i h_{i,s_i}$.
\end{lemm}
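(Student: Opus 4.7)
The plan is to prove existence and uniqueness separately, both by induction on the leading monomial $\ov{h}$. For existence, I invoke Theorem~\ref{cd-lemma for Lb}\ITEM2: since $R$ is a Gr\"obner--Shirshov basis and $0\neq h\in\Id(R)$, there is a normal $R$-polynomial $h_{1,s_1}$ with $\ov{h_{1,s_1}}=\ov{h}$. Putting $\alpha_1=lc(h)$ and $h'=h-\alpha_1 h_{1,s_1}$, either $h'=0$ and the decomposition has a single term, or $0\neq h'\in\Id(R)$ with $\ov{h'}<\ov{h}$, to which the induction hypothesis applies, and I concatenate.

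For uniqueness, the key step is to show that a normal $R$-polynomial is completely determined by its leading monomial. Because $\ell(\bar{s})>1$ for every $s\in R$, only the first shape in the definition of a normal $R$-polynomial can occur, so by Lemma~\ref{hshs} every normal $R$-polynomial has the form $h_s=[sa_1\cdots a_m]_{_L}$ with $\ov{h_s}=[\bar{s}a_1\cdots a_m]_{_L}$. Suppose $[s_1 a_1\cdots a_m]_{_L}$ and $[s_2 b_1\cdots b_n]_{_L}$ share a leading monomial; writing $\bar{s_i}=[c_1^{(i)}\cdots c_{k_i}^{(i)}]_{_L}$ and comparing the two left-normed words representing this common monomial in $\NF(B)$ letterwise gives, under WLOG $k_1\leq k_2$, the identity $\bar{s_2}=[\bar{s_1}c_{k_1+1}^{(2)}\cdots c_{k_2}^{(2)}]_{_L}$. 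The right-hand side is the leading monomial of a normal polynomial built from $s_1$, so if $s_1\neq s_2$ it witnesses $\bar{s_2}\notin\Irr(R\setminus\{s_2\})$, contradicting that $R$ is reduced. Consequently $s_1=s_2$, the attached letters match, and $h_{s_1}=h_{s_2}$.

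Given this, if $h=\sum_{i=1}^t\alpha_i h_{i,s_i}=\sum_{j=1}^{t'}\beta_j h'_{j,s'_j}$ are two strictly decreasing expressions, then each of $h_{1,s_1}$ and $h'_{1,s'_1}$ is monic because $s_1,s'_1\in R$ are, so comparing the coefficients of $\ov{h}$ forces $\alpha_1=lc(h)=\beta_1$ and $\ov{h_{1,s_1}}=\ov{h}=\ov{h'_{1,s'_1}}$. The claim then yields $h_{1,s_1}=h'_{1,s'_1}$; subtracting this common term reduces the problem to the induction hypothesis applied to $h-\alpha_1 h_{1,s_1}$. The main obstacle is the shape-uniqueness claim itself: in general a single leading monomial can be realised by distinct normal $R$-polynomials, and only the combination of the length condition $\ell(\bar{s})>1$ with the reducedness of $R$ rules this out---which is exactly the content that Proposition~\ref{greater1} is designed to supply.
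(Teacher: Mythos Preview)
Your proof is correct and follows essentially the same approach as the paper: both argue by induction on $\ov{h}$, invoke the Composition--Diamond lemma to produce a normal $R$-polynomial with leading monomial $\ov{h}$, and use the length hypothesis $\ell(\bar s)>1$ together with reducedness of $R$ to conclude that a normal $R$-polynomial is uniquely determined by its leading monomial (the paper phrases this contradiction as ``$s_1,s'_1$ have an inclusion composition'', which amounts to your $\bar{s_2}\notin\Irr(R\setminus\{s_2\})$). One small remark: your closing sentence slightly misstates the role of Proposition~\ref{greater1}---that proposition does not supply the uniqueness argument but rather explains why the hypothesis $\ell(\bar s)>1$ may be assumed without loss of generality; the uniqueness here rests solely on the reducedness of $R$ combined with that hypothesis.
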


\begin{proof}
Note that every normal $R$-polynomial in $\Lbs(B)$ has the form: $[s\bb_1\cdots\bb_n]_{_L}$, where $s\in R, \bb_1,\dots,\bb_n\in B, n\geq 1,\ \ell(\ov{s})\geq 2$.
For each $0\neq h\in \Id(R)$, there is a normal $R$-polynomial $h_{1,s_1}$ such that $\ov{h}=\ov{h_{1,s_1}}$, since $R$ is a Gr\"{o}bner-Shirshov basis in
$\Lbs(B)$. If there is another normal $R$-polynomial $h'_{1,s'_1}$ such that $\ov{h}=\ov{h'_{1,s'_1}}$, then  $h'_{1,\ov{s'_1}}=\ov{h'_{1,s'_1}}=\ov{h_{1,s_1}}=h_{1,\ov{s_1}}$. It follows that $s_1,s'_1$ has inclusion composition. This contradicts   with the fact  that $R$ is a reduced Gr\"{o}bner-Shirshov basis in $\Lbs(B)$. So there is a unique normal $R$-polynomial $h_{1,s_1}$ such that $\ov{h}=\ov{h_{1,s_1}}$.  Moreover, we have $h-\alpha_1 h_{1,s_1}\in\Id(R)$ where $\alpha_1=lc(h)$ and  $\ov{h-\alpha_1h_{1,s_1}}<\ov{h}$.
Therefore the result follows by induction on $\ov{h}$.
\end{proof}

According to Theorem \ref{reduced} and Proposition \ref{greater1}, in this section, we  can  always assume that~$R$ is a reduced \gsb\  in $\Lbs( B)$ and for each~$f\in R$, $\ell(\ov{f})$ is greater than $1$.
Thus, in $\Lbs( B)$, each normal  $R$-polynomial $\hh_s$ has the form  $\hh_s=[s\cc_1\cdots\cc_n]_{_L}$ where $\cc_1,\ldots,\cc_n\in B,\ s\in R$, and $R$ has only left multiplication composition $(\mu\ff)$ where $\mu\in \NF(B),\ f\in R$. By Lemma \ref{le2.2}, $(\mu\ff)$ has a unique expression $(\mu\ff)=(\mu\ff)_{_R}=\sum \alpha_{i}h_{i,s_i}$,  where each $\alpha_i\in\kk$ and $h_{i,s_i}$ is a normal $R$-polynomial. If this is the case, we have $\mathfrak{B}=\mathfrak{B}_0\oplus\mathfrak{B}_1$, where $\mathfrak{B}_0$ (resp. $\mathfrak{B}_1$) is a vector space spanned by the set of all elements in $\Irr(R)$ with parity 0 (resp. 1).
We denote
$$
h_{\lfloor s\rfloor}:=\{\{\{\lfloor s\rfloor \cdot \cc_1\}\cdots\}\cdot\cc_n\},\ \mbox{ where }\ \{x\}=x \ \mbox{ if }\ x\in \mathfrak{A},
$$
$$
(\mu\ff)_{_{\lfloor R\rfloor}}:=\sum\alpha_i h_{i,\lfloor s_i\rfloor},
$$
$$
\Irr(R)=\{\mu\in \NF(B)\mid \mu\neq \ov{\hh_s} \mbox{ for any normal } R\mbox{-polynomial } \hh_s\ \mbox{in }\Lbs(B) \},
$$
$$
\Irr(R_{(\lfloor\  \rfloor, \cdot)})=\{\mu\in \NF(A\cup B)\mid \mu\neq \ov{\hh_r} \mbox{ for any normal } R_{(\lfloor\  \rfloor, \cdot)}\mbox{-polynomial } \hh_r\ \mbox{in}\ \Lbs(A\cup B) \}.
$$

\begin{lemm} \label{le2.3-}
The following statements hold.
\begin{enumerate}
\item[(a)]\ Suppose $\{(bb')\mid b,b'\in B\}\subseteq \ov{R}$. If $\mu\in$ $\NF(B)$ then
\begin{align}\label{mu}
\mu=\sum \alpha_ih_{i,r_i}+\sum \beta_j\aa_j+\sum \gamma_t \bb_t,
\end{align}
where each $\alpha_i, \beta_j,\gamma_t\in\kk$, $\aa_j\in A,\ \bb_t\in B$, $h_{i, r_{i}}$ is a normal $R'_{(\lfloor\  \rfloor, \cdot)}$-polynomial in~$\Lbs(A\cup B)$ and $\deg(\ov{h_{i, r_{i}}})\leq\deg(\mu)$.

\item[(b)]\ If $\mu\in\NF(A\cup B)\sm \NF(B)$ then
\begin{align}\label{equamu}
\mu=\sum \alpha_i h_{i, r_{i}}+\sum \beta_ja_j,
\end{align}
\begin{align}\label{abbb}
[a b_1\cdots b_n]_{_L}=\sum \alpha_i h_{i, r_{i}}+\{\{\{a\cdot b_1\}\cdots\}\cdot\bb_n\},
\end{align}
where each $\alpha_i,\beta_j\in\kk$, $a_j,a\in A,\ b_1,\dots, b_n\in B$, $\deg(\ov{h_{i, r_{i}}})\leq\deg(\mu)$ and in (\ref{equamu}) (resp. (\ref{abbb})) $h_{i, r_{i}}$ is a normal $R_{(\lfloor\  \rfloor, \cdot)}\ (resp.\ R'_{(\lfloor\  \rfloor, \cdot)})$-polynomial in~$\Lbs(A\cup B)$.

Moreover, if $\{(bb')\mid b,b'\in B\}\subseteq \ov{R}$, then in (\ref{equamu})
each $h_{i, r_{i}}$ is a normal $R'_{(\lfloor\  \rfloor, \cdot)}$-polynomial in~$\Lbs(A\cup B)$.

\end{enumerate}

\end{lemm}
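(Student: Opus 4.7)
The plan is to prove the three assertions (a), (\ref{abbb}), and (\ref{equamu}) (together with the moreover clause) by separate inductions, since each has a slightly different right-hand shape.

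For (a), I will induct on $\mu\in\NF(B)$ with respect to the deg-length-lex order. If $\ell(\mu)=1$ then $\mu\in B$ and is already in the stated form. If $\ell(\mu)=2$, write $\mu=(b_1b_2)$; by the hypothesis $\{(bb')\mid b,b'\in B\}\subseteq\overline{R}$ there is $f\in R$ with $\overline{f}=\mu$, whence
\begin{equation*}
\mu=\Theta_f+\lfloor f\rfloor-(f-\overline{f}),
\end{equation*}
and $f-\overline{f}$ is a $k$-linear combination of elements of $\NF(B)$ strictly smaller than $\mu$, to which the induction hypothesis applies. If $\ell(\mu)\ge 3$, write $\mu=(\nu\,b_n)$ with $\nu=[b_1\cdots b_{n-1}]_{_L}$; apply IH to $\nu$ and multiply on the right by $b_n$. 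Each product $(h_{i,r_i}'b_n)=[h_{i,r_i}'\,b_n]_{_L}$ stays a normal $R'_{(\lfloor\ \rfloor,\cdot)}$-polynomial by Remark~\ref{hsa}; each $(a_jb_n)=\Upsilon_{a_jb_n}+\{a_j\cdot b_n\}$ splits into a normal $R'$-polynomial plus an element of $A$; each $(b_tb_n)$ is handled by the $\ell=2$ case just settled. The degree bound $\deg(\overline{h_{i,r_i}})\le\deg(\mu)$ is preserved because right-multiplication by a single letter adds exactly $1$ to the degree and because any terms produced by IH on smaller $\mu'$ already satisfy $\deg(\overline{h})\le\deg(\mu')\le\deg(\mu)$.

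For (\ref{abbb}), I will induct on $n$. The base $n=0$ is trivial ($\mu=a$). For $n\ge1$, write $[ab_1\cdots b_n]_{_L}=([ab_1\cdots b_{n-1}]_{_L}\,b_n)$ and apply IH to get $[ab_1\cdots b_{n-1}]_{_L}=\sum\alpha_i h_{i,r_i}+h_{n-1}$, where $h_{n-1}=\{\{\{a\cdot b_1\}\cdots\}\cdot b_{n-1}\}\in\mathfrak{A}$. Right-multiplying by $b_n$, the terms $(h_{i,r_i}b_n)$ remain normal $R'$-polynomials, and writing $h_{n-1}=\sum_l\beta_la_l$, the remaining piece becomes
\begin{equation*}
(h_{n-1}b_n)=\sum_l\beta_l(\Upsilon_{a_lb_n}+\{a_l\cdot b_n\})=\Upsilon_{h_{n-1}b_n}+\{h_{n-1}\cdot b_n\},
\end{equation*}
whose second summand is exactly $\{\{\{a\cdot b_1\}\cdots\}\cdot b_n\}$ by bilinearity of the supermodule action.

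For (\ref{equamu}), I will induct on $\mu\in\NF(A\cup B)\setminus\NF(B)$ in the deg-length-lex order. The base case $\mu=a\in A$ is immediate. Otherwise write $\mu=(\mu'\,x_n)$. If $\mu'\in\NF(B)$, then $x_n\in A$ (else $\mu\in\NF(B)$, contradicting $\mu\notin\NF(B)$), and $\mu=\Gamma_{\mu'x_n}+\{\mu'\cdot x_n\}$ is of the required shape, with $\Gamma_{\mu'x_n}\in R_{(\lfloor\ \rfloor,\cdot)}$ itself a normal $R_{(\lfloor\ \rfloor,\cdot)}$-polynomial. If $\mu'\notin\NF(B)$, apply IH to $\mu'$ and distribute over $x_n$: normal $R_{(\lfloor\ \rfloor,\cdot)}$-polynomials remain normal after right multiplication, while a term $(a_jx_n)$ splits as $\Omega_{a_jx_n}+\{a_j\cdot x_n\}$ if $x_n\in A$, or as $\Upsilon_{a_jx_n}+\{a_j\cdot x_n\}$ if $x_n\in B$. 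For the moreover clause, I will run the same argument but invoke (a) in place of the trivial $\mu'\in\NF(B)$ case; the extra $\sum\gamma_tb_t$ summand then yields $(b_ta)=\Gamma_{b_ta}+\{b_t\cdot a\}$ with $\Gamma_{b_ta}\in R'_{(\lfloor\ \rfloor,\cdot)}$, ensuring that every $h_{i,r_i}$ we produce is a normal $R'_{(\lfloor\ \rfloor,\cdot)}$-polynomial.

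The main technical point will be tracking the degree inequality $\deg(\overline{h_{i,r_i}})\le\deg(\mu)$ uniformly across all cases. This is not hard in isolation — right multiplication by a letter adds $1$ to the degree, and every inductive call is made on an expression of strictly smaller order in deg-length-lex so its output already carries the correct degree bound — but it must be verified at each reduction step (in particular when invoking (a) inside the moreover clause of (b), and when the length-$2$ subcase of (a) is reused for the $\ell\ge 3$ subcase).
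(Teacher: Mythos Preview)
Your proposal is correct and follows essentially the same approach as the paper: both argue by induction, reducing by one letter at a time and using the elementary decompositions $(aa')=\Omega_{aa'}+\{a\cdot a'\}$, $(ab)=\Upsilon_{ab}+\{a\cdot b\}$, $(ba)=\Gamma_{ba}+\{b\cdot a\}$, $(bb')=\Theta_f+\lfloor f\rfloor-(f-\overline f)$ at the base, together with Remark~\ref{hsa} to preserve normality under right multiplication. The only cosmetic differences are that the paper runs a single induction on $\deg(\mu)$ for (a) and (b) simultaneously and, in part (b), splits according to the position of the first $A$-letter rather than peeling off the last letter; your right-to-left peeling is equally valid and the degree bounds you track are exactly those the paper uses.
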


\begin{proof} Note that if $\{(bb')\mid b,b'\in B\}\subseteq \ov{R}$, $R$ is a reduced \gsb\ in $\Lbs(B)$ and $f\in R$ with $\bar f=(bb')$, then $f=(bb')+\sum\alpha_i\bb_i$, where each $\alpha_i\in\kk,\ \bb_i\in B$.

We show (a) and (b)  by induction on~$\deg(\mu)$.

If $\deg(\mu)=1$, there is nothing to prove.

Suppose $\deg(\mu)=2$. Then $\mu$ is equal to $(aa')$ or $(ab)$ or $(ba)$ or $(bb')$, where~$a,a'\in A,\ b,b'\in B$. Thus, (a) and (b)  hold, since $(aa')=\Omega_{aa'}+\{a\cdot a'\}$; $(ab)=\Upsilon_{_{ab}}+\{a\cdot b\}$; $(ba)=\Gamma_{ba}+\{b\cdot a\}$; $(bb')=\Theta_{_{f}}-\sum\alpha_i\bb_i+\lfloor f\rfloor$, where $f=(bb')+\sum\alpha_i\bb_i\in R,\ \alpha_i\in\kk,\ \bb_i\in B$.

Suppose $\mu=[c_1c_2\cdots c_n]_{_L},\ c_1,\cdots, c_n\in A\cup B,\ n>2$.

(a) Suppose $\{(bb')\mid b,b'\in B\}\subseteq \ov{R}$ and $\mu\in\NF(B)$, i.e., $c_1,\cdots, c_n\in  B$.
Then by induction hypothesis $\mu=([c_1\cdots c_{n-1}]_{_L}c_n)=\sum \alpha_i(h_{i,r_i}c_n)+\sum \beta_j(\aa_jc_n)+\sum \gamma_t (\bb_t c_n)$, where $\alpha_i, \beta_j,\gamma_t\in\kk$, $\aa_j\in A,\ \bb_t\in B$, $h_{i,r_i}$ is a normal $R'_{(\lfloor\  \rfloor, \cdot)}$-polynomial in $\Lbs(A\cup B)$, and~$\deg(\ov{h_{i,r_i}})\leq\deg([c_1\cdots c_{n-1}]_{_L})$.
Then each $(h_{i,r_i}c_n)$ is a normal $R'_{(\lfloor\  \rfloor, \cdot)}$-polynomial in $\Lbs(A\cup B)$, and~$\deg(\ov{h_{i,r_i}})+\deg(c_n)=\deg(\ov{(h_{i,r_i} c_n)})\leq\deg([c_1\cdots c_{n}]_{_L})$.
Thus by using the above result when $\deg(\mu)=2$, (a) holds.

(b) Suppose $\mu\in\NF(A\cup B)\sm \NF(B)$. Then there are two cases  to consider.

Case 1. $\cc_1=a\in A$. Thus we have the following equation $\mu=[h_rc_3\cdots c_n]_{_L}+[\{a\cdot c_2\}c_3\cdots c_n]_{_L}$, where $h_r=\Omega_{aa'}$ if $c_2=a'\in A$; and $h_r=\Upsilon_{_{ab}}$ if $c_2=b\in B$.
Now $\deg(\ov{[h_rc_3\cdots c_n]_{_L}})=\deg(\ov{h_r})+\sum_{i=3}^n\deg(c_i)=\deg([c_1c_2c_3\cdots c_n]_{_L})$, and
the degree of each element in $\supp([\{a\cdot c_2\}c_{3}\cdots c_n]_{_L})$ is equal to $n-1$. Thus, (\ref{abbb}) and of course (\ref{equamu})  hold by induction.

Case 2. There exists $c_t, 1< t\leq n$ such that $c_1,\cdots, c_{t-1}\in B,\ c_t=a\in A$. Then $\mu=[[c_1\cdots c_{t-1} a]_{_L}c_{t+1}\cdots c_n]_{_L}=[\Gamma_{[c_1\cdots c_{t-1}]_{_L}a}c_{t+1}\cdots c_n]_{_L}+[\{[c_1\cdots c_{t-1}]_{_L}\cdot a\}c_{t+1}\cdots c_n]_{_L}$, where $[\Gamma_{[c_1\cdots c_{t-1}]_{_L}a}c_{t+1}\cdots c_n]_{_L}$ is a normal $R_{(\lfloor\  \rfloor, \cdot)}$-polynomial in $\Lbs(A\cup B)$, and
the degree of each element in
$\supp(\{[c_1\cdots c_{t-1}]_{_L}\cdot a\}c_{t+1}\cdots c_n]_{_L})$ is equal to $n-t+1$. So, (\ref{equamu}) follows by induction.

Now, suppose $\{(bb')\mid b,b'\in B\}\subseteq \ov{R}$. By (a) we may assume that $[c_1\cdots c_{t-1}]_{_L}=\sum \alpha_ih_{i,r_i}+\sum \beta_j\aa_j+\sum \gamma_l \bb_l$, where each $h_{i,r_i}$ is a normal $R'_{(\lfloor\  \rfloor, \cdot)}$-polynomial in $\Lbs(A\cup B)$. Therefore,
$\mu=[[c_1\cdots c_{t-1}]_{_L} a c_{t+1}\cdots c_n]_{_L}=\sum_{i} \alpha_i [h_{i, r_{i}}ac_{t+1}\cdots c_n]_{_L}+\sum_{j} \beta_j[a_jac_{t+1}\cdots c_n]_{_L}+\sum \gamma_l [\Gamma_{b_la}c_{t+1}\cdots c_n]_{_L}+\sum \gamma_l [\{b_l\cdot a\}c_{t+1}\cdots c_n]_{_L}$. By Case 1, both $[a_jac_{t+1}\cdots c_n]_{_L}$ and $[\{b_l\cdot a\}c_{t+1}\cdots c_n]_{_L}$ are linear combinations of normal $R'_{(\lfloor\  \rfloor, \cdot)}$-polynomials and elements in $A$. Thus, (\ref{equamu}) holds.
\end{proof}

\begin{lemm} \label{le2.3}
The set
$R_{(\lfloor\  \rfloor, \cdot)}$ is a Gr\"{o}bner-Shirshov basis in
$\Lbs( A\cup B)$ if and only if the following two  conditions hold in $\mathfrak{A}$:

\ITEM1 For all $a\in A, f\in R$,
 $
a\cdot f=\{ a\cdot\lfloor f\rfloor\}
$
and~$
{f \cdot a=\{\lfloor f\rfloor \cdot a\}}
$.

  \ITEM2 For all $\mu\in \NF(B), f\in R$, and $(\mu\ff)_{_R}=\sum \alpha_{i}h_{i,s_i}$ (by Lemma \ref{le2.2}), we have
  $$
\mu\cdot\lfloor f\rfloor=(\mu\ff)_{_{\lfloor R\rfloor}}.
$$
Moreover, if $R_{(\lfloor\  \rfloor, \cdot)}$ is a Gr\"{o}bner-Shirshov basis, then
$
E_{ (\mathfrak{A}, \mathfrak{B},\lfloor\  \rfloor,\cdot)}
$
is an extension of $\mathfrak{B}$ by~$\mathfrak{A}$.
\end{lemm}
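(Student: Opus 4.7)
The plan is to translate the Gr\"obner--Shirshov basis condition into the algebraic conditions (i) and (ii) by examining each type of composition in $R_{(\lfloor\ \rfloor,\cdot)}$. First I would pin down the leading monomials under the deg-length-lex order induced by $A<B$: since each of $\{a\cdot a'\}$, $\{a\cdot b\}$, $\{\nu\cdot a\}$, $\lfloor f\rfloor$ lies in $\mathfrak A$ (hence has support in single $A$-letters), and single $A$-letters sit below every monomial of length $\geq 2$, the leading monomials are $\overline{\Omega_{aa'}}=(aa')$, $\overline{\Upsilon_{ab}}=(ab)$, $\overline{\Gamma_{\nu a}}=(\nu a)$, $\overline{\Theta_f}=\bar f$.

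Next I would catalogue inclusion compositions. Those among $\Omega,\Upsilon,\Gamma$ can only be trivial pairwise equalities, since matching the outermost letters of one leading monomial with those of a normal polynomial built from another forces contradictory parities ($A$ vs.\ $B$) on the interior letters. Inclusion compositions between $\Theta_{f'}$ and $\Theta_f$ restrict to inclusion compositions of $R$ in $\Lbs(B)$, trivial since $R$ is a reduced GSB. The one genuinely new inclusion composition is $(\Gamma_{\bar f\,a},\Theta_f)_{\bar\Gamma_{\bar f\,a}}$, where the normal $\Theta_f$-polynomial $[\Theta_f\,a]_{_L}$ shares leading $(\bar f\,a)$ with $\Gamma_{\bar f\,a}$. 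Computing this composition and invoking the $\Gamma$-relations to replace $(r\,a)$ and $(\lfloor f\rfloor a)$ (where $r=f-\bar f$) by $\{r\cdot a\}$ and $\{\lfloor f\rfloor\cdot a\}$, the $\mathfrak A$-valued residue is $-\{f\cdot a\}+\{\lfloor f\rfloor\cdot a\}$, which vanishes iff $f\cdot a=\{\lfloor f\rfloor\cdot a\}$, the second half of (i).

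For left multiplication compositions, those on $\Omega,\Upsilon,\Gamma$ and on $\Theta_f$ by a $\mu$ involving $A$-letters reduce, via Lemma \ref{le2.3-} and the compatible $\Lbs(B)$-supermodule axioms, to the two critical cases. For $(a\Theta_f)=(af)-(a\lfloor f\rfloor)$ with $a\in A$: iterating $\Upsilon$ and $\Omega$ reductions together with the supermodule identity $a\cdot(f_1f_2)=(a\cdot f_1)\cdot f_2-(-1)^{|f_1||f_2|}(a\cdot f_2)\cdot f_1$ shows $(af)\equiv a\cdot f$ and $(a\lfloor f\rfloor)\equiv\{a\cdot\lfloor f\rfloor\}$; triviality forces $a\cdot f=\{a\cdot\lfloor f\rfloor\}$, the first half of (i). For $(\mu\Theta_f)$ with $\mu\in\NF(B)$: expanding $(\mu f)=\sum\alpha_i h_{i,s_i}$ via Lemma \ref{le2.2}, writing each $h_{i,s_i}=[\Theta_{s_i}c_1\cdots c_n]_{_L}+[\lfloor s_i\rfloor c_1\cdots c_n]_{_L}$, and collapsing the second summand to $h_{\lfloor s_i\rfloor}$ via $\Upsilon$, we obtain $(\mu\Theta_f)\equiv(\mu f)_{_{\lfloor R\rfloor}}-\{\mu\cdot\lfloor f\rfloor\}$ modulo normal $R_{(\lfloor\ \rfloor,\cdot)}$-polynomials of the correct degree; triviality is precisely (ii).

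Finally, for the ``moreover'' part, assume $R_{(\lfloor\ \rfloor,\cdot)}$ is a GSB. Theorem \ref{cd-lemma for Lb} combined with Lemma \ref{le2.3-} yields $\Irr(R_{(\lfloor\ \rfloor,\cdot)})=A\cup\Irr(R)$: any monomial involving an $A$-letter together with a second letter is the leading of an $\Omega/\Upsilon/\Gamma$-normal polynomial, while pure $B$-monomials are irreducible here iff irreducible under $R$. Thus $E_{(\mathfrak A,\mathfrak B,\lfloor\ \rfloor,\cdot)}\cong\mathfrak A\oplus\mathfrak B$ as vector spaces. The map $\iota\colon\mathfrak A\to E$, $a\mapsto a+\Id(R_{(\lfloor\ \rfloor,\cdot)})$, is an algebra monomorphism (via $\Omega$) whose image is an ideal (via $\Upsilon,\Gamma$), and the projection $\pi\colon E\to\mathfrak B$ sending $a\mapsto 0$, $b\mapsto b+\Id(R)$ is well-defined (all generators of $\Id(R_{(\lfloor\ \rfloor,\cdot)})$ map to $0$ in $\mathfrak B$) and surjective with kernel $\iota(\mathfrak A)$, giving the short exact sequence $0\to\mathfrak A\to E\to\mathfrak B\to 0$. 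The main obstacle will be the sign and bookkeeping in the $(\mu\Theta_f)$ expansion to confirm the $\mathfrak A$-valued residue assembles exactly into $(\mu f)_{_{\lfloor R\rfloor}}-\{\mu\cdot\lfloor f\rfloor\}$.
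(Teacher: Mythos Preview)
Your overall strategy matches the paper's: list the compositions, show those involving $\Omega,\Upsilon,\Gamma$ are always trivial via the compatible supermodule axioms and Lemma~\ref{le2.3-}, and then reduce the $\Theta$-compositions to the $\mathfrak A$-valued residues that encode (i) and (ii). The ``moreover'' part is also handled the same way.

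There is, however, a genuine gap in your inclusion-composition analysis. You only treat $(\Gamma_{\bar f\,a},\Theta_f)$, but in fact for every $\nu=[\bar f\,b_1\cdots b_n]_{_L}\in\NF(B)$ with $n\geq 0$ there is an inclusion composition
\[
(\Gamma_{\nu a},\Theta_f)_{\overline{\Gamma_{\nu a}}}
=\Gamma_{\nu a}-[\Theta_f\,b_1\cdots b_n\,a]_{_L},
\]
since the normal $\Theta_f$-polynomial $[\Theta_f\,b_1\cdots b_n\,a]_{_L}$ has leading monomial $[\bar f\,b_1\cdots b_n\,a]_{_L}=(\nu a)$. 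For the ``only if'' direction your $n=0$ case suffices to extract $f\cdot a=\{\lfloor f\rfloor\cdot a\}$, but for the ``if'' direction you must verify that this single identity forces triviality of \emph{all} these compositions. The paper does this by an induction on $n$: writing $h_f=[f\,b_1\cdots b_n]_{_L}$ and using the supermodule identity $(h_f\cdot b_n)\cdot a$ together with the inductive hypothesis, one shows $h_f\cdot a=\{h_{\lfloor f\rfloor}\cdot a\}$, whence the residue of the general inclusion composition vanishes. Without this step the ``if'' direction is incomplete.

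A smaller point: your claim that left multiplication compositions on $\Omega,\Upsilon,\Gamma$ ``reduce via the compatible supermodule axioms'' is correct but hides real work. In the paper this is Cases~I--III, each split according to whether $\mu\in\NF(B)$ or $\mu\in\NF(A\cup B)\setminus\NF(B)$, and the six supermodule identities (three defining a supermodule, three for compatibility) are each used at a specific spot. You should at least indicate which axiom handles which case, since this is where the hypothesis that $\mathfrak A$ is a \emph{compatible} $\Lbs(B)$-supermodule, rather than merely a supermodule, is actually consumed.
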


\begin{proof}
 For the first claim, since $R$ is a reduced  Gr\"{o}bner-Shirshov basis in $\Lbs(B)$, the  only possible compositions in
$R_{(\lfloor\ \rfloor,\cdot)}$ are as follows:
$$
(\mu \Omega_{aa'}),\ \ (\mu \Upsilon_{_{ab}}),  \ (\mu \Gamma_{\nu a}), \ \ (\mu \Theta_{_f}), \ \ \mbox{and inclusion  composition}\ (\Gamma_{\nu a}, \Theta_{_f})_{_{\ov{\Gamma_{\nu a}}}},
$$
where $\mu\in \NF(A\cup B),\ a,a'\in A,\ b\in B,\ \nu\in \NF(B),\ f\in R$.

By (\ref{equamu}), we assume $
\mu=\sum \alpha_i h_{i, r_{i}}+\sum \beta_ja_j
$, if $\mu\in\NF(A\cup B)\sm \NF(B)$,
where each $\alpha_i,\beta_j\in\kk$, $a_j\in A$, $\deg(\ov{h_{i, r_{i}}})\leq\deg(\mu)$ and  $h_{i, r_{i}}$ is a normal $R_{(\lfloor\  \rfloor, \cdot)}$-polynomial in~$\Lbs(A\cup B)$.
Therefore, for all $g\in \Lbs(A\cup B)$, we have
\begin{align}\label{mug}
(\mu g)\equiv \sum \beta_j(\aa_j g)\ \ \mod(R_{(\lfloor\  \rfloor, \cdot)}, \deg(\mu )+\deg(\ov{g}))\ \ \mbox{ if } \mu\in\NF(A\cup B)\sm \NF(B).
\end{align}
Assume~$\nu=[b_1\cdots b_n]_{_L},\ b_1,\dots,b_n\in B,\ n\geq 1$. Then by Lemma \ref{multiplication}, it follows that~$(g\nu)=(g[b_1\cdots b_n]_{_L})=\sum\gamma_t [g x_{t_1}\cdots x_{t_n}]_{_L}$ for all~$g\in \Lbs(A\cup B)$, where each ~$\gamma_t\in \kk$, $(\xx_{t_1},\ldots,\xx_{t_{n}})$ is a permutation of~$(\bb_1,\ldots,\bb_n)$.
If~$g\in \mathfrak{A}$, then using (\ref{abbb}), we have
\begin{align}\label{2.00}
(g\nu)=\sum_{t}\gamma_t (\{\{\{g\cdot x_{t1}\}\cdots\}\cdot x_{tn}\}+\sum_m \alpha_{tm} h_{tm, r_{tm}}),
\end{align}
where each $\alpha_{tm}\in\kk,\ r_{tm}\in R_{(\lfloor\  \rfloor, \cdot)}$, $h_{tm, r_{tm}}$ is a normal $R_{(\lfloor\  \rfloor, \cdot)}$-polynomial and $\deg(\ov{h_{tm, r_{tm}}})\leq\deg(\overline{(g\nu)})$.
 According to the fact that $\mathfrak{A}$ is a  compatible $\Lbs( B)$-supermodule,
for all $\mu\in\NF(B),\ b\in B$ and $a,a',a_j\in A$, we have
\begin{align}\label{2.01}
 &\{\{\mu\cdot a\}\cdot a'\}-(-1)^{|a||a'|}\{\{\mu\cdot a'\}\cdot a\}-\{\mu\cdot\{a\cdot a'\}\}=0,
\end{align}
\begin{align}\label{2.02}
 &\{\{a_j\cdot a\}\cdot a'\}-(-1)^{|a||a'|}\{\{a_j\cdot a'\}\cdot a\}-\{a_j\cdot\{a\cdot a'\}\}=0,
 \end{align}
\begin{align}\label{2.1}
 &\{\{\mu\cdot a\}\cdot b\}-(-1)^{|a||b|}\{(\mu b)\cdot a\}-\{\mu\cdot\{a\cdot b\}\}=0,
\end{align}
\begin{align}\label{2.2}
 &\{\{a_j\cdot a\}\cdot b\}-(-1)^{|a||b|}\{\{a_j\cdot b\}\cdot a\}-\{a_j\cdot\{a\cdot b\}\}=0,
\end{align}
and by a similar proof of Lemma \ref{multiplication}, we have
\begin{align}\label{2.3}
 &\{(\mu \nu)\cdot a\}-(-1)^{|\nu||a|}(\sum\gamma_t\{\{\{\{\mu\cdot a\} \cdot x_{t_1}\}\cdots\}\cdot x_{t_n}\})-\{\mu\cdot\{\nu\cdot a\}\}=0,
\end{align}
\begin{align}\label{2.4}
 &\sum_t\gamma_t \{\{\{\{a_j\cdot x_{t1}\}\cdots\}\cdot x_{tn}\}\cdot a\}-(-1)^{|\nu||a|}(\sum_t\gamma_t (\{\{\{\{a_j\cdot a\}\cdot x_{t1}\}\cdots\}\cdot x_{tn}\})-\{a_j\cdot\{\nu\cdot a\}\}=0.
\end{align}

 Now we claim that $
(\mu \Omega_{aa'})$, $(\mu \Upsilon_{_{ab}})$ and~$\ (\mu \Gamma_{\nu a})$ are trivial modulo $R_{(\lfloor\ \rfloor,\cdot)}$.

Case I.
$
(\mu \Omega_{aa'})=((\mu a)a')-(-1)^{|a||a'|}((\mu a')a)-(\mu\{a\cdot a'\}).
$ There are two cases to consider.

(a) If $\mu\in\NF(B)$, then
\begin{align*}
(\mu \Omega_{aa'})
&=(\Gamma_{\mu a}a')+\Omega_{\{\mu\cdot a\}a'}+\{\{\mu\cdot a\}\cdot a'\}
-(-1)^{|a||a'|}((\Gamma_{\mu a'}a )+\Omega_{\{\mu\cdot a'\}a }+\{\{\mu\cdot a'\}\cdot a \})\\
&\ \ \ -\Gamma_{\mu\{a\cdot a'\}}-\{\mu\cdot\{a\cdot a'\}\}\\
&\equiv0\ \mod(R_{(\lfloor\  \rfloor, \cdot)},\deg(\mu )+\deg(\ov{\Omega_{aa'}}))\ \ (by\ (\ref{2.01})).
\end{align*}

(b) If $\mu\in\NF(A\cup B)\sm\NF(B)$, then by (\ref{equamu}),
we have
\begin{align*}
(\mu \Omega_{aa'})&\equiv\sum \beta_j(a_j \Omega_{aa'})\equiv\sum \beta_j(((a_j a)a')-(-1)^{|a||a'|}((a_j a')a)-(a_j\{a\cdot a'\}))\ (by\ \ref{mug})\\
&\equiv\sum \beta_j((\Omega_{a_ja}a')+\Omega_{\{a_j\cdot a\}a'}+\{\{a_j\cdot a\}\cdot a'\}-(-1)^{|a||a'|}(\Omega_{a_ja'}a )-(-1)^{|a||a'|}\Omega_{\{a_j\cdot a'\}a }\\
&\ \ \ \ \ \ \ \ \ \ \ \ \ -(-1)^{|a||a'|}\{\{a_j\cdot a'\}\cdot a\}-\Omega_{a_j\{a\cdot a'\}}-\{a_j\cdot\{a\cdot a'\}\})\\
&\equiv0 \mod(R_{(\lfloor\  \rfloor, \cdot)},\deg(\mu )+\deg(\ov{\Omega_{aa'}})) \ \ (by\ (\ref{2.02})).
\end{align*}

Case II.
$
(\mu \Upsilon_{_{ab}})=((\mu a)b)-(-1)^{|a||b|}((\mu b)a)-(\mu\{a\cdot b\}).
$ We also consider in two cases.

(a) If $\mu\in\NF(B)$, then
\begin{align*}
(\mu \Upsilon_{_{ab}})&=(\Gamma_{\mu a}b)+\Upsilon_{_{\{\mu\cdot a\}b}}+\{\{\mu\cdot a\}\cdot b\}
             -(-1)^{|a||b|}(\Gamma_{(\mu b)a}+\{(\mu b)\cdot a\})-\Gamma_{\mu\{a\cdot b\}}-\{\mu\cdot\{a\cdot b\}\}\\
&\equiv0 \mod(R_{(\lfloor\  \rfloor, \cdot)},\deg(\mu)+\deg(\ov{\Upsilon_{ab}}))\ \ (by\ (\ref{2.1})).
\end{align*}

(b) If $\mu\in\NF(A\cup B)\sm\NF(B)$, then by (\ref{equamu}), we have
\begin{align*}
(\mu \Upsilon_{ab})&\equiv\sum \beta_j(a_j \Upsilon_{ab})\equiv\sum \beta_j(((a_j a)b)-(-1)^{|a||b|}((a_j b)a)-(a_j\{a\cdot b\}))\ \ (by\ (\ref{mug}))\\
&\equiv\sum \beta_j((\Omega_{a_ja}b)+\Upsilon_{_{\{a_j\cdot a\}b}}+\{\{a_j\cdot a\}\cdot b\}-(-1)^{|a||b|}(\Upsilon_{_{a_j b}}a )-(-1)^{|a||b|}\Omega_{\{a_j\cdot b\}a }\\
&\ \ \ \ \ \ \ \ \ \ \ \ \ -(-1)^{|a||b|}\{\{a_j\cdot b\}\cdot a\}-\Omega_{a_j\{a\cdot b\}}-\{a_j\cdot\{a\cdot b\}\})\\
&\equiv0 \mod(R_{(\lfloor\  \rfloor, \cdot)},\deg(\mu)+\deg(\ov{\Upsilon_{ab}}))\ \ (by\ (\ref{2.2})).
\end{align*}

Case III.
$
(\mu \Gamma_{\nu a})=((\mu \nu)a)-(-1)^{|\nu||a|}((\mu a)\nu)-(\mu\{\nu\cdot a\}).
$
Assume~$\nu=[b_1\cdots b_n]_{_L},\ b_1,\dots,b_n\in B,\ 1\leq n$.
We also need to consider this case in two situations.

(a) If $\mu\in\NF(B)$, then
\begin{align*}
(\mu \Gamma_{\nu a})
&=\Gamma_{(\mu \nu) a}+\{(\mu \nu)\cdot a\}-(-1)^{|\nu||a|}((\Gamma_{\mu a}\nu)+
(\{\mu\cdot a\} \nu)) -\Gamma_{\mu\{\nu\cdot a\}}-\{\mu\cdot\{\nu\cdot a\}\}\\
&=\Gamma_{(\mu \nu) a}+\{(\mu \nu)\cdot a\}-(-1)^{|\nu||a|}((\Gamma_{\mu a}\nu)+\sum\gamma_t
(\{\{\{\{\mu\cdot a\} \cdot x_{t_1}\}\cdots\}\cdot x_{t_n}\}+\sum_m \alpha_{tm} h_{tm,r_{tm}}))\\
&\ \ \ -\Gamma_{\mu\{\nu\cdot a\}}-\{\mu\cdot\{\nu\cdot a\}\}\ \ (by\ (\ref{2.00}))\\
&\equiv0 \mod(R_{(\lfloor\  \rfloor, \cdot)},\deg(\mu)+\deg(\ov{\Gamma_{\nu a}}))\ \ (by\ (\ref{2.3})).
\end{align*}

(b) If $\mu\in\NF(A\cup B)\sm\NF(B)$, then by (\ref{equamu}), we have
\begin{align*}
(\mu \Gamma_{\nu a})&\equiv\sum_j \beta_j(a_j \Gamma_{\nu a})\equiv\sum_j \beta_j(((a_j \nu)a)-(-1)^{|\nu||a|}((a_j a)\nu)-(a_j\{\nu\cdot a\}))\ \ \ (by\ (\ref{mug}))\\
&\equiv\sum_j \beta_j((\sum\gamma_t (\{\{\{a_j\cdot x_{t1}\}\cdots\}\cdot x_{tn}\}+\sum_m \alpha_{jtm} h_{jtm, r_{jtm}})a)\\
&\ \ \ -(-1)^{|\nu||a|}(\Omega_{a_j a}\nu)-(-1)^{|\nu||a|}(\sum_t\gamma_t (\{\{\{\{a_j\cdot a\}\cdot x_{t1}\}\cdots\}\cdot x_{tn}\}+\sum_m \alpha'_{jtm} h'_{jtm, r'_{jtm}}))\\
&\ \ \ -\Omega_{a_j\{\nu\cdot a\}}-\{a_j\cdot \{\nu\cdot a\}\})\ \ \ (by\ (\ref{2.00}))\\
&\equiv\sum_j \beta_j(\sum_t\gamma_t (\Omega_{_{ \{\{\{a_j\cdot x_{t1}\}\cdots\}\cdot x_{tn}\}a}}+ \{\{\{a_j\cdot x_{t1}\}\cdots\}\cdot x_{tn}\}\cdot a\})\\
&\ \ \ \ \ \ \ \ \ \ \ \ -(-1)^{|\nu||a|}\sum_t\gamma_t \{\{\{\{a_j\cdot a\}\cdot x_{t1}\}\cdots\}\cdot x_{tn}\}-\{a_j\cdot \{\nu\cdot a\}\})\\
&\equiv 0 \mod(R_{(\lfloor\  \rfloor, \cdot)},\deg(\mu)+\deg(\ov{\Gamma_{\nu a}}))\ \ (by\ (\ref{2.4})).
\end{align*}

Then $(\mu \Omega_{aa'})$, $(\mu \Upsilon_{_{ab}})$ and~$\ (\mu \Gamma_{\nu a})$ are trivial modulo $R_{(\lfloor\ \rfloor,\cdot)}$.

Case IV. Now we consider the composition $(\mu \Theta_{_f})$ in two cases.

(a) Suppose $\mu$ lies in~$\NF(B)$. For all $f\in R$, by Lemma \ref{le2.2}, we have
$
(\mu\ff)=(\mu\ff)_{_R}=\sum \alpha_{i}h_{i,s_i},
 $
 where each $h_{i,s_i}=[s_i b_{i1}\cdots b_{in_i}]_{_L}$ is a normal $R$-polynomial in $\Lbs(B)$, $s_i\in R$, $\alpha_{i}\in \kk,\ b_{i1},\dots, b_{in_i}\in B$ and $\deg(\overline{h_{i,s_i}})\leq\deg(\mu)+\deg(\bar{\ff})$.
Moreover, we have the following equation in $\Lbs(A\cup B)$
\begin{align*}
h_{i,s_i}&=[\Theta_{s_i} b_{i1}\cdots b_{in_i}]_{_L}+[\lfloor s_i \rfloor b_{i1}\cdots b_{in_i}]_{_L}\\
&\equiv[\Upsilon_{_{\lfloor s_i \rfloor b_{i1}}}b_{i2}\cdots b_{in_i}]_{_L}+\cdots+\Upsilon_{_{\{\{\{\lfloor s_i \rfloor\cdot b_{i1}\}\cdots\}\cdot b_{i(n_i-1)}\}b_{in_i}}} +\{\{\{\lfloor s_i \rfloor\cdot b_{i1}\}\cdots\}\cdot b_{in_i}\}\\
&\equiv h_{i,\lfloor s_i \rfloor}\mod(R_{(\lfloor\ \rfloor,\cdot)},\deg(\ov{h_{i,s_i}})),
\end{align*}
where
$\deg(\ov{h_{i,s_i}})\leq\deg(\mu)+\deg(\ov{f})=\deg(\mu)+\deg(\ov{\Theta_{_f}})$.
Thus
\begin{align*}
(\mu \Theta_{_f})&= (\mu f)-(\mu \lfloor  f \rfloor)
\equiv \sum \alpha_{i}h_{i,\lfloor s_i \rfloor}-\Gamma_{\mu \lfloor  f \rfloor}-\{\mu\cdot \lfloor  f \rfloor\} \mod( R_{(\lfloor\ \rfloor,\cdot)}, \deg(\mu)+\deg(\ov{\Theta_{_f}})),
\end{align*}
where $\Gamma_{\mu \lfloor  f \rfloor}$ is a normal~$R_{(\lfloor\ \rfloor,\cdot)}$-polynomial and its degree $\leq\deg(\mu)+\deg(\ov{\Theta_{_f}})$. So we know that for~$\mu\in \NF(B)$, $(\mu  \Theta_{_f})\equiv 0 \mod (R_{(\lfloor\ \rfloor,\cdot)}, \deg(\mu)+\deg(\ov{\Theta_{_f}}))$ if and only if, in $\mathfrak{A}$,
$$
\mu\cdot \lfloor  f \rfloor=\sum \alpha_{i}h_{i,\lfloor s_i \rfloor}.
$$
(b) If $\mu\in \NF(A\cup B)\sm \NF(B)$, then by (\ref{equamu}), it follows that
\begin{align*}
(\mu \Theta_{_f})&\equiv\sum \beta_j(a_j\Theta_{_f})\equiv \sum \beta_j((a_jf)-(a_j \lfloor  f \rfloor))\ \mod( R_{(\lfloor\ \rfloor,\cdot)}, \deg(\mu)+\deg(\ov{\Theta_{_f}}))\ (by\ (\ref{mug})).
\end{align*}
So we only consider~$(a f)$, where $a\in A$, and~$f=\sum_j\lambda_j [b_{j1}\cdots b_{jn_j}]_{_L}\in R,$ where $\lambda_j\in k$, $b_{j1},\dots, b_{jn_j}\in B$.
By a similar proof of Lemma \ref{multiplication}, we have
$$a\cdot f= \sum_{j,t}\lambda_j \beta_{tj}\{\{\{a\cdot x_{tj_1}\}\cdots\}\cdot x_{tj_{n_j}}\}.$$
According to (\ref{2.00}), we have
\begin{align*}
(af)&=\sum_j\lambda_j (a[b_{j_1}\cdots b_{j_{n_j}}]_{_L})=\sum_{j,t}\lambda_j \beta_{tj}(\{\{\{a\cdot x_{tj_1}\}\cdots\}\cdot x_{tj_{n_j}}\}+\sum_m \alpha_{tjm} h_{tjm, r_{tjm}}),
\end{align*}
where $(x_{tj_1},\dots, x_{tj_{n_j}})$ is a permutation of $(b_{j_1},\dots, b_{j_{n_j}})$, each $\beta_{tj},\alpha_{tjm}\in\kk$, $h_{tjm, r_{tjm}}$ is a normal $R_{(\lfloor\  \rfloor, \cdot)}$-polynomial and $\deg(\ov{h_{tjm, r_{tjm}}})\leq\deg([a x_{j1}\cdots x_{jn_j}]_{_L})$. It follows that
\begin{align*}
(a \Theta_{_f})&=(a f)-(a\lfloor  f \rfloor)\\
&=\sum_{j,t}\lambda_j \beta_{tj}(\{\{\{a\cdot x_{tj_1}\}\cdots\}\cdot x_{tj_{n_j}}\}+\sum_m \alpha_{tjm} h_{tjm, r_{tjm}})-\Omega_{a \lfloor  f \rfloor}-\{a \cdot\lfloor  f \rfloor\}\\
&\equiv a\cdot f-\{a \cdot\lfloor  f \rfloor\}\mod(R_{(\lfloor\  \rfloor, \cdot)},\deg(\aa)+\deg(\ov{\Theta_{_f}})).
\end{align*}
Thus for any~$\mu\in \NF(A\cup B)\sm \NF(B)$, $(\mu \Theta_{_f})$ is trivial modulo $(R_{(\lfloor\ \rfloor,\cdot)}, \deg(\mu)+\deg(\ov{\Theta_{_f}}))$ if and only if for any $a\in A$,
$
a\cdot f=\{a\cdot \lfloor  f \rfloor\}.
$

Next we think about the inclusion composition. Let $h_f=[f b_1b_2\cdots b_n]_{_L}$ be an arbitrary normal $R$-polynomial in $\Lbs(B)$, where $b_1,\cdots, b_n\in B, n\geq 0$ and $f=\bar f+r_{_f}\in R$. Then we have the inclusion composition $(\Gamma_{\nu a},\Theta_{_f})_{_{\ov{\Gamma_{\nu a}}}}$, where $\nu=[\bar{f}b_1b_2\cdots b_n]_{_L}\in \NF(B),\ a\in A$.
\begin{align*}
(\Gamma_{\nu a},\Theta_{_f})_{_{\ov{\Gamma_{\nu a}}}}&=\Gamma_{\nu a}-[\Theta_{_f}b_1\cdots b_n a]_{_L}\\
&=-\{\nu\cdot a\}-\Theta_{_{[r_{_f} b_1\cdots b_n]_{_L}a}}-\{[r_{_f} b_1\cdots b_n]_{_L}\cdot a\}+[\lfloor  f \rfloor b_1\cdots b_n a]_{_L}\\
&\equiv -[f b_1\cdots b_n]_{_L}\cdot a +\{\{\{\{\lfloor  f \rfloor\cdot b_1\}\cdot \cdots\}\cdot b_n\} \cdot a\}\\
&\equiv -h_f\cdot a+\{h_{\lfloor  f \rfloor}\cdot a\}\mod(R_{(\lfloor\ \rfloor,\cdot)}, (\nu a)).
\end{align*}
Assume that $f\cdot a=\{\lfloor  f \rfloor\cdot a\}$ holds, for each~$f\in R$ and  $a\in A$.
Then we have
$
[f \bb_1]_{_L}\cdot a
=f\cdot \{\bb_1\cdot a\}+(-1)^{|a||\bb_1|}\{\{f\cdot a\}\cdot \bb_1\}
=\{\lfloor  f \rfloor\cdot \{\bb_1\cdot a \}\}
+(-1)^{|a||\bb_1|}\{\{\lfloor  f \rfloor\cdot a\}\cdot \bb_1\}
=\{\{f\cdot \bb_1\}\cdot a\}.
$
From this, we get
\begin{align*}
&\ \ \ \ h_f\cdot a=[f \bb_1\bb_2\cdots\bb_n]_{_L}\cdot a\\
&=[f \bb_1\bb_2\cdots\bb_{n-1}]_{_L}\cdot \{\bb_n\cdot a\}+(-1)^{|a||\bb_n|}\{\{[f \bb_1\bb_2\cdots\bb_{n-1}]_{_L}\cdot a\}\cdot \bb_n\}\\
&= \cdots\\
&=\{\{\{\{\lfloor  f \rfloor\cdot\bb_1\}\cdots\}\cdot\bb_{n-1}\}\cdot \{\bb_n\cdot a \}\}
+(-1)^{|a||\bb_n|}\{\{\{\{\{\lfloor  f \rfloor\cdot\bb_1\}\cdots\}\cdot\bb_{n-1}\}\cdot a\}\cdot \bb_n\}\\
&=\{\{\{\{\{\{\lfloor  f \rfloor\cdot\bb_1\}\cdot\bb_2\}\cdots\}\cdot\bb_{n-1}\}\cdot \bb_n\}\cdot a\}=\{h_{\lfloor  f \rfloor}\cdot a\}.
\end{align*}

Thus all possible inclusion compositions are trivial if and only if
for each~$f\in R$ and  $a\in A$, $f\cdot a=\{\lfloor  f \rfloor\cdot a\}$ holds.

Therefore, we can see that $R_{(\lfloor\  \rfloor, \cdot)}$ is a
Gr\"{o}bner-Shirshov basis in $\Lbs( A\cup B)$ if and
only if the conditions  \ITEM1 and \ITEM2 hold.

We now turn to the second  claim. If $R_{(\lfloor\  \rfloor, \cdot)}$ is a Gr\"{o}bner-Shirshov basis,
then by Theorem \ref{cd-lemma for Lb}, we
have that $\Irr(R_{(\lfloor\  \rfloor, \cdot)})=A\cup \Irr(R)$ is a linear
basis of $E_{(\mathfrak{A}, \mathfrak{B},\lfloor\  \rfloor,\cdot)}$,
where $ \Irr(R)=\{ \nu\in \NF(B)\mid \nu\neq h_{\bar{g}}$ for any normal $R$-polynomial~$h_g$ in $\Lbs(B) \}. $
Thus~$E_{(\mathfrak{A}, \mathfrak{B},\lfloor\  \rfloor,\cdot)}=
 \mathfrak{A}\oplus \mathfrak{B}$ as a vector  space, where~$\mathfrak{B}$ is spanned by~$\Irr(R)$. Because $R$ is a \gsb\ in $\Lbs(B)$,  for all~$\nu,\nu'\in \Irr(R)$, by Lemmas \ref{f=Irr+n-s-polynomials}, \ref{le2.2} and Theorem \ref{cd-lemma for Lb}, in
$\Lbs( B)$, we have unique $\delta_i, \beta_j\in k$, $\nu_i\in \Irr(R)$ and unique normal $R$-polynomial $h_{j, g_j}$  such that
 $$(\nu\nu')=\sum \delta_i\nu_i+\sum \beta_j h_{j, g_j}.$$
  Thus the multiplication in~$E_{(\mathfrak{A}, \mathfrak{B},\lfloor\  \rfloor,\cdot)}$  is defined as below: for all
$a,a'\in A$ and~$\nu,\nu'\in \Irr(R)$,
\begin{align}\label{5.6multi}
((a+\nu)(a'+\nu'))=\{a\cdot a'\}+\{a\cdot \nu'\}+\{\nu\cdot a'\}+\sum \delta_i\nu_i+\sum \beta_j h_{j,\lfloor g_j\rfloor}.
\end{align}

It is easy to see that
$\mathfrak{A}$ is an ideal of $E_{(\mathfrak{A}, \mathfrak{B},\lfloor\
\rfloor,\cdot)}$. Define
$$
\pi : E_{(\mathfrak{A}, \mathfrak{B},\lfloor\
\rfloor,\cdot)}= \mathfrak{A}\oplus \mathfrak{B}\rightarrow \mathfrak{B},\ x+y\mapsto y,\ x\in \mathfrak{A},\ y\in \mathfrak{B}.
$$
 Obviously~$\pi$ is a surjective linear map, and we have $\pi(a)=0\in\mathfrak{B}_0\cap\mathfrak{B}_1$, $\pi(\nu)=\nu\in \Irr(R)$,
 $$\pi(((a+\nu)(a'+\nu')))=\sum \delta_i\nu_i=(\nu\nu')=(\pi(a+\nu)\pi(a'+\nu')).$$
Thus $\pi$ is a homomorphism. Moreover, we get $\ker\pi=\mathfrak{A}$. Therefore, $E_{ (\mathfrak{A}, \mathfrak{B},\lfloor\  \rfloor,\cdot)}$
is an extension of $\mathfrak{B}$ by $\mathfrak{A}$.
\end{proof}

The following lemma shows that in Lemma \ref{le2.3}, we can not replace $R_{(\lfloor\  \rfloor, \cdot)}$ with $R'_{(\lfloor\  \rfloor, \cdot)}$.

\begin{lemm}\label{lemma4.16}
If $\{(bb')\mid b,b'\in B\}\subseteq \ov{R}$, then
$R'_{(\lfloor\  \rfloor, \cdot)}$ is a Gr\"{o}bner-Shirshov basis in
$\Lbs( A\cup B)$ if and only if the condition $(i)$ in Lemma \ref{le2.3} holds, and
$(ii)'$ for all $b\in B, f\in R$, and $(b\ff)_{_R}=\sum \alpha_{i}h_{i,s_i}$ (by Lemma \ref{le2.2}), we have, in $\mathfrak{A}$,
  $$
b\cdot\lfloor f\rfloor=(b\ff)_{_{\lfloor R\rfloor}}.
$$
Moreover, if $\{(bb')\mid b,b'\in B\}\nsubseteq \ov{R}$, then $R'_{(\lfloor\  \rfloor, \cdot)}$ is not a Gr\"{o}bner-Shirshov basis in $\Lbs(A\cup B)$.
\end{lemm}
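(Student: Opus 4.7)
The plan is to mirror the strategy used in the proof of Lemma~\ref{le2.3}, while exploiting the hypothesis $\{(bb')\mid b,b'\in B\}\subseteq \overline{R}$. That hypothesis, via Lemma~\ref{le2.3-}(a), lets me reduce any $\nu\in\NF(B)$ with $\ell(\nu)\geq 2$ to a linear combination of elements of $A\cup B$ and normal $R'_{(\lfloor\ \rfloor,\cdot)}$-polynomials. This is precisely what allows us to dispense with the family $\Gamma_{\nu a}$ for $\ell(\nu)\geq 2$ and keep only the length-one instances $\Gamma_{ba}$.

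For the ``only if'' direction, I plan to recycle the computations from the proof of Lemma~\ref{le2.3}: triviality of $(a\Theta_{_f})$ delivers the first half of $(i)$, and triviality of $(b\Theta_{_f})$ delivers $(ii)'$. The remaining identity $f\cdot a=\{\lfloor f\rfloor\cdot a\}$ of $(i)$ will be obtained from the left multiplication composition $(b'\Gamma_{ba})$ by choosing $f\in R$ with $\overline{f}=(b'b)$: substituting $((b'b)a)=(fa)-(r_{_f}a)$ and $(fa)=[\Theta_{_f}a]_{_L}+\Omega_{\lfloor f\rfloor a}+\{\lfloor f\rfloor\cdot a\}$, reducing $(r_{_f}a)$, $((b'a)b)$ and $(b'\{b\cdot a\})$ through $\Omega$, $\Upsilon$, $\Gamma_{ba}$ and Lemma~\ref{le2.3-}, and comparing the $\mathfrak{A}$-level (length-one) contributions on both sides forces $\{\lfloor f\rfloor\cdot a\}$ to agree with the value of $f\cdot a$ computed via the compatible supermodule axioms.

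For the ``if'' direction I plan to verify triviality of every composition of $R'_{(\lfloor\ \rfloor,\cdot)}$. The compositions $(\mu\Omega_{aa'})$ and $(\mu\Upsilon_{_{ab}})$ are handled verbatim as in Lemma~\ref{le2.3}. The only inclusion compositions involve pairs of $\Theta$'s and translate to inclusion compositions of $R$ in $\Lbs(B)$, which are trivial because $R$ is a reduced \gsb; lifting to $\Lbs(A\cup B)$ introduces extra $\mathfrak{A}$-terms that cancel using $(i)$. For $(\mu\Gamma_{ba})$, Lemma~\ref{le2.3-}(a),(b) reduces $\mu$ to $A\cup B$; the case $\mu=a'\in A$ follows from the supermodule axioms, and the case $\mu=b'\in B$ is handled by expanding $((b'b)a)$ through some $f\in R$ with $\overline{f}=(b'b)$ and absorbing the leftover $\mathfrak{A}$-terms using both halves of $(i)$ together with $(ii)'$. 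Compositions $(\mu\Theta_{_f})$ reduce to $\mu\in A\cup B$, where $(i)$ and $(ii)'$ do the job. The main obstacle I anticipate is the careful bookkeeping of $\mathfrak{A}$-level scalars in the expansion of $(b'\Gamma_{ba})$; these must cancel on the nose, a fact that ultimately relies on the compatible supermodule axioms.

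For the negative claim, suppose $b_0,b'_0\in B$ with $(b_0 b'_0)\notin\overline{R}$. For any $a\in A$ I would consider the left multiplication composition $(b_0\Gamma_{b'_0 a})=((b_0 b'_0)a)-(-1)^{|b'_0||a|}((b_0 a)b'_0)-(b_0\{b'_0\cdot a\})$, whose leading monomial is $[b_0 b'_0 a]_{_L}$. The plan is to argue $[b_0 b'_0 a]_{_L}\in\Irr(R'_{(\lfloor\ \rfloor,\cdot)})$: the only candidate normal polynomial sharing this leading monomial would be $[\Theta_{_f}a]_{_L}$ for some $f\in R$ with $\overline{f}=(b_0 b'_0)$, but $(b_0 b'_0)\notin\overline{R}$ by assumption, and the polynomials $\Omega_{aa'}$, $\Upsilon_{_{ab}}$, $\Gamma_{ba}$ contribute only length-two leading monomials that cannot match. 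Hence the leading term of the composition survives any reduction by $R'_{(\lfloor\ \rfloor,\cdot)}$, so the composition is not trivial and $R'_{(\lfloor\ \rfloor,\cdot)}$ fails to be a \gsb\ in $\Lbs(A\cup B)$.
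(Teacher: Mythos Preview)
Your plan is essentially the paper's own proof, and the negative claim is handled exactly as the paper does (via $[b_0b'_0a]_{_L}\in\Irr(R'_{(\lfloor\ \rfloor,\cdot)})$ together with Theorem~\ref{cd-lemma for Lb}). One point needs correction, though: you cannot handle $(\mu\Omega_{aa'})$ and $(\mu\Upsilon_{_{ab}})$ ``verbatim as in Lemma~\ref{le2.3}'' when $\mu\in\NF(B)$, because Cases~I(a) and~II(a) there invoke $\Gamma_{\mu a}$, $\Gamma_{(\mu b)a}$, $\Gamma_{\mu\{a\cdot a'\}}$ for $\mu$ of arbitrary length, and those polynomials are \emph{not} in $R'_{(\lfloor\ \rfloor,\cdot)}$. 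The fix is the one you already describe for $(\mu\Gamma_{ba})$: first reduce $\mu\in\NF(B)$ to a combination of letters in $A\cup B$ via Lemma~\ref{le2.3-}(a), and then compute $(b_t\Omega_{aa'})$, $(b_t\Upsilon_{_{ab}})$ directly. When you do this you will find that $(b_t\Upsilon_{_{ab}})\equiv f_t\cdot a-\{\lfloor f_t\rfloor\cdot a\}$ (with $\overline{f_t}=(b_tb)$), so this composition is \emph{not} automatically trivial---it too encodes the second half of condition~$(i)$, just as $(b'\Gamma_{ba})$ does. This does not break your argument (in the ``if'' direction you are assuming~$(i)$ anyway, and in the ``only if'' direction you already extract $f\cdot a=\{\lfloor f\rfloor\cdot a\}$ from $(b'\Gamma_{ba})$), but the word ``verbatim'' is wrong and should be replaced by this reduction-to-letters step.
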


\begin{proof}
Since $R$ is a reduced  Gr\"{o}bner-Shirshov basis in $\Lbs(B)$, the  only possible compositions in
$R'_{(\lfloor\ \rfloor,\cdot)}$ are as follows:
$$
(\mu \Omega_{aa'}),\ \ (\mu \Upsilon_{_{ab}}),  \ (\mu \Gamma_{b a}), \ \ (\mu \Theta_{_f}),
$$
where $\mu\in \NF(A\cup B),\ a,a'\in A,\ b\in B,\ f\in R$.

According to Lemma \ref{le2.3-} (b) and the proof of Case I(b)--III(b), Case IV in Lemma \ref{le2.3}, we know that for all $a,a'\in A,\ b\in B,\ f\in R,\ \mu \in \NF(A\cup B)\backslash\NF(B)$ and normal $R$-polynomial $h_{s}$ in $\Lbs(B)$, $s\in R$,
\begin{align}\label{5.18}
(\mu \Omega_{aa'})\equiv0\mod(R'_{(\lfloor\  \rfloor, \cdot)},\deg(\mu)+\deg(\ov{ \Omega_{aa'}}))\ \ \ \ \mbox{(by Case I (b))};
\end{align}
\begin{align}\label{5.19}
(\mu \Upsilon_{_{ab}})\equiv0\mod(R'_{(\lfloor\  \rfloor, \cdot)},\deg(\mu)+\deg(\ov{ \Upsilon_{_{ab}}}))\ \ \ \ \mbox{(by Case II (b))};
\end{align}
\begin{align}\label{5.20}
(\mu \Gamma_{b a})\equiv0\mod(R'_{(\lfloor\  \rfloor, \cdot)},\deg(\mu)+\deg(\ov{ \Gamma_{b a}}))\ \ \ \ \mbox{(by Case III (b))};
\end{align}
\begin{align}\label{5.21}
(a \Theta_{_f})&\equiv a\cdot f-\{a \cdot\lfloor  f \rfloor\}\mod(R'_{(\lfloor\  \rfloor, \cdot)},\deg(\aa)+\deg(\ov{\Theta_{_f}}))\ \ \ \ \mbox{(by Case IV (b))};
\end{align}
\begin{align}\label{5.22}
h_{s}&\equiv h_{\lfloor s \rfloor}\mod(R'_{(\lfloor\  \rfloor, \cdot)},\deg(\ov{h_{s}}))\ \ \ \ \mbox{(by Case IV (a))};
\end{align}
while
\begin{align}\label{5.2-}
(\mu \Theta_{_f})\equiv0\mod(R'_{(\lfloor\  \rfloor, \cdot)},\deg(\mu)+\deg(\ov{ \Theta_{_f}}))
\ \mbox{holds}
\end{align}
if and only if $a\cdot f=\{ a\cdot\lfloor f\rfloor\}$,
 for all $a\in A$ holds in $\mathfrak{A}$, by Case IV (b) in Lemma \ref{le2.3}.

Now we turn to consider left multiplication compositions when $\mu\in\NF(B)$. If $\mu\in\NF(B)$, then by (\ref{mu}), we have $\mu=\sum \alpha_ih_{i,r_i}+\sum \beta_j\aa_j+\sum \gamma_t \bb_t$, where $\alpha_i, \beta_j,\gamma_t\in\kk$, $\aa_j\in A,\bb_t\in B$, $h_{i,r_i}$ is a normal $R'_{(\lfloor\  \rfloor, \cdot)}$-polynomial in $\Lbs(A\cup B)$ and $\deg(\ov{h_{i,r_i}})\leq\deg(\mu)$. Therefore, for all $g\in \Lbs(A\cup B)$, we have
\begin{align}\label{5.17}
(\mu g)\equiv \sum \beta_j(\aa_j g)+\sum \gamma_t (\bb_t g)\ \ \mod(R'_{(\lfloor\  \rfloor, \cdot)}, \deg(\mu )+\deg(\ov{g})).
\end{align}
\begin{align*}
(\mu \Omega_{aa'})&\equiv\sum \gamma_t (\bb_t \Omega_{aa'})\ \ \ \ (by\ (\ref{5.17}), (\ref{5.18})) \\
&\equiv\sum \gamma_t (((\bb_t a)a')-(-1)^{|a||a'|}((\bb_t a')a)-(\bb_t\{a\cdot a'\}))\\
&\equiv\sum \gamma_t ((\Gamma_{b_t a}a')+\Omega_{\{b_t\cdot a\}a'}+\{\{b_t\cdot a\}\cdot a'\}
-(-1)^{|a||a'|}((\Gamma_{b_t a'}a )+\Omega_{\{b_t\cdot a'\}a}\\
&\ \ \ +\{\{b_t\cdot a'\}\cdot a \})
-\Gamma_{b_t\{a\cdot a'\}}-\{b_t\cdot\{a\cdot a'\}\})\\
&\equiv\sum \gamma_t (\{\{b_t\cdot a\}\cdot a'\}-(-1)^{|a||a'|}\{\{b_t\cdot a'\}\cdot a\}-\{b_t\cdot\{a\cdot a'\}\})\\
&\equiv 0\ \  \mod(R'_{(\lfloor\  \rfloor, \cdot)}, \deg(\mu )+\deg(\ov{\Omega_{aa'}}))\ \ \ \ (by\ (\ref{2.01})).
\end{align*}

For $b\in B$, suppose ${f_t}=\ov{{f_t}}+\sum_q\lambda_{tq} b_{tq}\in R$, where $\ov{{f_t}}=(b_tb),\ \lambda_{tq}\in\kk,\ b_{tq}\in B$. Then we have $(b_tb)=\Theta_{f_t}-\sum\lambda_{tq} b_{tq}+\lfloor {f_t}\rfloor$.
\begin{align*}
(\mu \Upsilon_{_{ab}})&\equiv\sum \gamma_t (\bb_t \Upsilon_{_{ab}})\ \ \ \ (by\ (\ref{5.17}), (\ref{5.19})) \\
&\equiv\sum \gamma_t(((b_t a)b)-(-1)^{|a||b|}((b_t b)a)-(b_t\{a\cdot b\}))\\
&\equiv\sum_t \gamma_t ((\Gamma_{b_t a}b)+\Upsilon_{_{\{b_t\cdot a\}b}}+\{\{b_t\cdot a\}\cdot b\} -(-1)^{|a||b|}((\Theta_{{f_t}}a)\\
&\ \ \ \ \ \ \ \ \ \ \ -\sum_q\lambda_{tq} (\Gamma_{b_{tq}a}+\{b_{tq}\cdot a\})+\Omega_{\lfloor {f_t}\rfloor a}+\{\lfloor {f_t}\rfloor\cdot a\})-\Gamma_{b_t\{a\cdot b\}}-\{b_t\cdot\{a\cdot b\}\})\\
&\equiv\sum_t \gamma_t (\{\{b_t\cdot a\}\cdot b\}+(-1)^{|a||b|}\sum_q\lambda_{tq} \{b_{tq}\cdot a\}-(-1)^{|a||b|}\{\lfloor {f_t}\rfloor\cdot a\}-\{b_t\cdot\{a\cdot b\}\})\\
&\equiv(-1)^{|a||b|}\sum_t\gamma_t(\{(b_tb)\cdot a\}+\sum_q\lambda_{tq} \{b_{tq}\cdot a\}-\{\lfloor f_t\rfloor\cdot a\})\ \ \ (by\ (\ref{2.1}))\\
&\equiv\sum\gamma_t(f_t\cdot a-\{\lfloor f_t\rfloor\cdot a\})\ \ \mod(R'_{(\lfloor\  \rfloor, \cdot)},\deg(\mu)+\deg(\ov{\Upsilon_{ab}})).
\end{align*}
\begin{align*}
(\mu \Gamma_{\bb a})&\equiv\sum \gamma_t (\bb_t \Gamma_{\bb a})\ \ \ \ (by\ (\ref{5.17}), (\ref{5.20})) \\
&\equiv\sum \gamma_t(((b_t \bb)a)-(-1)^{|\bb||a|}((b_t a)\bb)-(b_t\{\bb\cdot a\}))\\
&\equiv\sum_t\gamma_t((\Theta_{{f_t}}a)-\sum_q\lambda_{tq} (\Gamma_{b_{tq}a}+\{b_{tq}\cdot a\})+\Omega_{\lfloor {f_t}\rfloor a}+\{\lfloor {f_t}\rfloor\cdot a\}\\
&\ \ \ \ \ \ \ \ \ \ \ \ \ -(-1)^{|\bb||a|}((\Gamma_{b_t a}b)+\Upsilon_{_{\{b_t\cdot a\}b}}+\{\{b_t\cdot a\}\cdot b\})-\Gamma_{b_t\{\bb\cdot a\}}-\{b_t\cdot\{\bb\cdot a\}\})\\
&\equiv\sum_t\gamma_t(-\sum_q\lambda_{tq} \{b_{tq}\cdot a\}+\{\lfloor f_t\rfloor\cdot a\}-(-1)^{|\bb||a|}\{\{b_t\cdot a\}\cdot b\}-\{b_t\cdot\{\bb\cdot a\}\})\\
&\equiv\sum_t\gamma_t(-\{(b_tb)\cdot a\}-\sum_q\lambda_{tq} \{b_{tq}\cdot a\}+\{\lfloor f_t\rfloor\cdot a\})\ \ \ (by\ (\ref{2.3}))\\
&\equiv\sum\gamma_t(-f_t\cdot a+\{\lfloor f_t\rfloor\cdot a\})\ \mod(R'_{(\lfloor\  \rfloor, \cdot)},\deg(\mu)+\deg(\ov{\Gamma_{ba}})).
\end{align*}
Then for all $\mu\in\NF(B),\ f\in R$,  $(\mu \Upsilon_{_{ab}})$ and~$\ (\mu \Gamma_{b a})$ are trivial modulo $R'_{(\lfloor\ \rfloor,\cdot)}$ if and only if
${f \cdot a=\{\lfloor f\rfloor \cdot a\}}$ holds in $\mathfrak{A}$ for all $a\in A, f\in R$.

For all $b_t\in B,\ f\in R$, by Lemma \ref{le2.2}, we have
$
(\bb_t\ff)=(\bb_t\ff)_{_R}=\sum \alpha_{ti}h_{ti,s_{ti}},
 $
 where each $h_{ti,s_{ti}}$ is a normal $R$-polynomial in $\Lbs(B)$, $s_{ti}\in R$, $\alpha_{ti}\in \kk$ and $\deg(\overline{h_{ti,s_{ti}}})\leq\deg(b_t)+\deg(\bar{\ff})\leq\deg(\mu)+\deg(\ov{f})=\deg(\mu)+\deg(\ov{\Theta_{_{f}}})$.
\begin{align*}
(\mu \Theta_{_{f}})&\equiv\sum \gamma_t (\bb_t \Theta_{_{f}})+\sum \beta_j (\aa_j  \Theta_{_{f}})\ \ (by\ (\ref{5.17}))\\
&\equiv\sum \gamma_t (\sum \alpha_{ti}h_{ti,s_{ti}}-(\bb_t \lfloor f\rfloor))+\sum \beta_j (\{\aa_j\cdot f\}- \{\aa_j\cdot \lfloor f\rfloor\})\ \ (by\ (\ref{5.21})) \\
&\equiv\sum \gamma_t (\sum \alpha_{ti}h_{ti,\lfloor s_{ti}\rfloor}-\{\bb_t \cdot \lfloor f\rfloor\})+\sum \beta_j(\{\aa_j\cdot f\}- \{\aa_j\cdot \lfloor f\rfloor\})\\
&\ \ \ \mod(R'_{(\lfloor\  \rfloor, \cdot)},\deg(\mu)+\deg(\ov{\Theta_{_{f}}}))\ \ (by\ (\ref{5.22})).
\end{align*}
Therefore, by using (\ref{5.2-}), for all $\mu\in\NF(A\cup B)$, $(\mu \Theta_{_f})$ is trivial modulo $(R'_{(\lfloor\  \rfloor, \cdot)},\deg(\mu)+\deg(\ov{ \Theta_{_f}}))$ if and only if
$a\cdot f=\{ a\cdot\lfloor f\rfloor\}$ holds in $\mathfrak{A}$, for all $a\in A, f\in R$ and condition $(ii)'$ holds.

So, we prove that $R'_{(\lfloor\  \rfloor, \cdot)}$ is a Gr\"{o}bner-Shirshov basis in
$\Lbs( A\cup B)$ if and only if $(i)$ and $(ii)'$ hold.

Finally we consider the second  claim. Assume $R'_{(\lfloor\  \rfloor,
\cdot)}$ is a \gsb\ in $\Lbs(A\cup B)$, and there exist $b_1, b_2\in B$, such that $(b_1 b_2)\notin \ov{R}$. Note that  $(b_1\Gamma_{b_2a})\in \Id(R'_{(\lfloor\  \rfloor,
\cdot)})$ and~$\ov{(b_1\Gamma_{b_2a})}=((b_1b_2)a)\in \Irr(R'_{(\lfloor\  \rfloor,
\cdot)})$. This is a contradiction with Theorem \ref{cd-lemma for Lb}.
\end{proof}


On the other hand, let  $0\rightarrow \mathfrak{A}  \rightarrow \mathfrak{E}\rightarrow \mathfrak{B} \rightarrow 0$
 be  a short exact sequence. We shall construct a  compatible $\Lbs(B)$-supermodule $\mathfrak{A}$  and a factor set $\lfloor \ \rfloor$ of $\mathfrak{B}$ in $\mathfrak{A}$ such that $E_{(\mathfrak{A}, \mathfrak{B},\lfloor\
\rfloor,\cdot)}\cong \mathfrak{E}$.

\begin{lemm}\label{le2.4}
Let  $0\rightarrow \mathfrak{A}  \overset{i}\rightarrow \mathfrak{E}\overset{\pi}\rightarrow \mathfrak{B} \rightarrow 0$
 be  a short exact sequence, where $i$ is the inclusion map. For each~$b\in B$, choose an element~$\tilde{b}\in \mathfrak{E}$ with $|\tilde{b}|=|b|$
such that $\pi(\tilde{b})=b+\Id(R) $ in $\mathfrak{B}=\Lbs(B)/\Id(R)$. Let~$\widetilde{\theta}:\Lbs( A\cup B)\rightarrow \mathfrak{E}$
 be the unique homomorphism from $\Lbs( A\cup B)$ to $\mathfrak{E}$
such that~$\widetilde{\theta}(b)=\tilde{b}$ and $\widetilde{\theta}(a)=a$ for all~$b\in B$ and $a\in A$. Define $\Lbs(B)$-supermodule operation $\cdot$ on $\mathfrak{A}$ as follows:
$$
g\cdot a=(\tilde{\theta}(g)a),  \   \ a\cdot
g=(a\tilde{\theta}(g)),\ a\in  A,\ g\in \Lbs(B)
$$
and define the map $
\lfloor \ \rfloor: R\rightarrow  \mathfrak{A},\ f\mapsto \lfloor f\rfloor=
\widetilde{\theta}(f)$.

Then  $\mathfrak{A}$ is a    compatible $\Lbs(B)$-supermodule and $\lfloor \ \rfloor$ is a   factor set of $\mathfrak{B}$ in $\mathfrak{A}$ such that the conditions \ITEM1 and \ITEM2 in Lemma \ref{le2.3} hold.
Moreover,~$E_{(\mathfrak{A}, \mathfrak{B},\lfloor\
\rfloor,\cdot)}= \Lbs( A\cup B|R_{(\lfloor\  \rfloor,
\cdot)})\cong
\mathfrak{E}$.

\end{lemm}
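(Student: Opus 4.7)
The plan is to verify the three bulleted assertions in sequence and then construct the isomorphism using $\widetilde{\theta}$ itself as the universal map.

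First I would check that $\mathfrak{A}$ is a compatible $\Lbs(B)$-supermodule. Because $\widetilde{\theta}$ preserves parity (thanks to the choice $|\tilde b|=|b|$) and $i(\mathfrak{A})$ is a graded ideal of $\mathfrak{E}$, the operations $a\cdot g=(a\widetilde{\theta}(g))$ and $g\cdot a=(\widetilde{\theta}(g)a)$ take values in $\mathfrak{A}$ and respect the $\mathbb{Z}_2$-grading. The six required axioms (three supermodule axioms and three compatibility axioms) are then all instances of the Leibniz superidentity in $\mathfrak{E}$ after transporting the $\Lbs(B)$-factors by $\widetilde{\theta}$. For the factor set, each $f\in R$ lies in $\Id(R)$, so $\pi(\widetilde{\theta}(f))=0$ and therefore $\lfloor f\rfloor=\widetilde{\theta}(f)\in\ker\pi=\mathfrak{A}$, with parity preserved since $f$ is homogeneous.

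Next I would verify conditions \ITEM1 and \ITEM2 of Lemma \ref{le2.3}. Condition \ITEM1 is immediate from the definitions: $a\cdot f=(a\widetilde{\theta}(f))=(a\lfloor f\rfloor)=\{a\cdot\lfloor f\rfloor\}$, where the last equality is simply the product inside $\mathfrak{A}$, and symmetrically for $f\cdot a$. For \ITEM2, write $(\mu f)_{_R}=\sum\alpha_i h_{i,s_i}$ with $h_{i,s_i}=[s_ib_{i1}\cdots b_{in_i}]_{_L}$. Since $\widetilde{\theta}$ is a homomorphism,
$$
\mu\cdot\lfloor f\rfloor=(\widetilde{\theta}(\mu)\widetilde{\theta}(f))=\widetilde{\theta}((\mu f))=\sum\alpha_i[\lfloor s_i\rfloor\tilde b_{i1}\cdots\tilde b_{in_i}]_{_L}.
$$
A short induction on $n_i$, using $\lfloor s_i\rfloor\cdot b=(\lfloor s_i\rfloor\tilde b)$ and that every intermediate value lies in $\mathfrak{A}$, identifies the right-hand side with $\sum\alpha_i h_{i,\lfloor s_i\rfloor}=(\mu f)_{_{\lfloor R\rfloor}}$.

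With \ITEM1 and \ITEM2 in hand, Lemma \ref{le2.3} tells us that $R_{(\lfloor\ \rfloor,\cdot)}$ is a \gsb\ in $\Lbs(A\cup B)$ and (second claim) that $E_{(\mathfrak{A},\mathfrak{B},\lfloor\ \rfloor,\cdot)}\cong\mathfrak{A}\oplus\mathfrak{B}$ as a vector space, with $\mathfrak{B}$ spanned by $\Irr(R)$. It remains to show that $\widetilde{\theta}$ descends to an isomorphism onto $\mathfrak{E}$. I would check that every generator of $R_{(\lfloor\ \rfloor,\cdot)}$ lies in $\ker\widetilde{\theta}$: $\Omega_{aa'}$, $\Upsilon_{_{ab}}$, and $\Gamma_{\nu a}$ vanish because the curly-brace actions on $\mathfrak{A}$ are, by definition, the corresponding products in $\mathfrak{E}$; and $\Theta_{_f}=f-\lfloor f\rfloor$ vanishes because $\widetilde{\theta}(f)=\lfloor f\rfloor$. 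Thus $\widetilde{\theta}$ induces a homomorphism $\overline{\Theta}:E_{(\mathfrak{A},\mathfrak{B},\lfloor\ \rfloor,\cdot)}\to\mathfrak{E}$. Surjectivity follows because $A$ generates $\mathfrak{A}$ while the $\tilde b$'s surject onto $\mathfrak{E}/\mathfrak{A}=\mathfrak{B}$; injectivity follows by applying $\pi$: if $\overline{\Theta}(a+\nu)=a+\widetilde{\theta}(\nu)=0$ in $\mathfrak{E}$ with $\nu\in\Irr(R)$, then $\pi$ forces $\nu=0$ in $\mathfrak{B}$, hence $\nu=0$ and finally $a=0$. The main bookkeeping obstacle is condition \ITEM2, where one must reconcile the iterated curly-brace definition of $h_{i,\lfloor s_i\rfloor}$ with the nested bracket $\widetilde{\theta}([s_ib_{i1}\cdots b_{in_i}]_{_L})$; the rest is a direct consequence of $\widetilde{\theta}$ being a parity-preserving homomorphism and of $\mathfrak{A}$ being a graded ideal of $\mathfrak{E}$.
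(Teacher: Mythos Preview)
Your proposal is correct and follows essentially the same approach as the paper: both verify the compatible supermodule and factor-set axioms via $\widetilde{\theta}$, and both obtain conditions \ITEM1 and \ITEM2 by applying $\widetilde{\theta}$ to the identities $a\cdot f$, $f\cdot a$, and $(\mu f)=\sum\alpha_i h_{i,s_i}$. The only noteworthy variation is in the final isomorphism: the paper first proves the vector-space decomposition $\mathfrak{E}=\mathfrak{A}\oplus\widetilde{\mathfrak{B}}$, defines a linear map $\varphi$ on the basis $A\cup\Irr(R)$, and then checks it is a homomorphism by comparing the multiplication tables (\ref{5.6multi}), whereas you obtain the homomorphism for free by checking that $\widetilde{\theta}$ annihilates each generator of $R_{(\lfloor\ \rfloor,\cdot)}$ and hence descends to the quotient---a slightly more economical route to the same map.
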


\begin{proof}
 Since $\mathfrak{A}$ is an ideal of $\mathfrak{E}$, we have
$(\tilde{\theta}(g)a),(a\tilde{\theta}(g))\in \mathfrak{A}$ for all~$g\in \Lbs(B)$ and~$ a\in  A$.  Thus, the operation is well defined.  It is easy to prove that~$\mathfrak{A}$ is a compatible $\Lbs(B)$-supermodule.
For each~$f\in R$, since $\pi(\widetilde{\theta}(f))=f +\Id(R)=0$ in $\mathfrak{B}$,  we have
$\widetilde{\theta}(f)\in \mathfrak{A}$. Thus,  $\lfloor \ \rfloor$ is well defined and $|f|=|\lfloor f\rfloor|$, i.e., $\lfloor \ \rfloor$ is a factor set of $\mathfrak{B}$ in $\mathfrak{A}$. Assume that~$h_s=[s\bb_1\cdots\bb_n]_{_L}$ is a normal~$R$-polynomial in $\Lbs(B)$, where~$s\in R,\ \bb_1,\dots,\bb_n\in B$. Then $\tilde{\theta}(h_s)=\{\{\{\lfloor s \rfloor\cdot \bb_1\}\cdot \cdots\}\cdot b_n\}=h_{\lfloor s \rfloor}$.

We now show that the conditions \ITEM1 and \ITEM2 in Lemma \ref{le2.3} hold.
  For all~$a\in A, f\in R$, we have~$
 a\cdot f=(a\tilde{\theta}(f))=\{ a\cdot \lfloor f\rfloor\}$, and~$
 f\cdot a=(\tilde{\theta}(f)a)=\{ {\lfloor f\rfloor}\cdot a\}$.  So \ITEM1 holds.
As for~\ITEM2, for all~$\mu\in \NF(B), f\in R$, by Lemma \ref{le2.2},
$(\mu\ff)=(\mu\ff)_{_R}=\sum \alpha_{i}h_{i,s_i}$,
 where each $h_{i,s_i}$ is a normal $R$-polynomial, $s_i\in R$, $\alpha_{i}\in k$ and $\deg(\overline{h_{i,s_i}})\leq \deg(\mu)+\deg(\bar{\ff})$.
So
$$
0 =  \tilde{\theta}((\mu\ff)-\sum \alpha_{i}h_{i,s_i})\\
 =    (\tilde{\theta}(\mu) \tilde{\theta}(f))-\sum \alpha_{i}\tilde{\theta}(h_{i,s_i})\\
 =   \mu\cdot \lfloor  f \rfloor-\sum \alpha_{i}h_{i,\lfloor s_i  \rfloor}.
$$
This shows (ii) holds.

Now we turn to prove that~$
\mathfrak{E}\cong E_{(\mathfrak{A}, \mathfrak{B},\lfloor\
\rfloor,\cdot)}$.
 Because (i) and (ii) hold, by Lemma \ref{le2.3}, we have $R_{(\lfloor\  \rfloor,
\cdot)}$ is a \gsb\ in $\Lbs( A\cup B)$. Thus,  by Theorem \ref{cd-lemma for Lb}, $\Irr(R_{(\lfloor\  \rfloor,
\cdot)})=A\cup\Irr(R)$ is a linear basis of~$E_{(\mathfrak{A}, \mathfrak{B},\lfloor\
\rfloor,\cdot)}= \Lbs( A\cup B|R_{(\lfloor\  \rfloor,
\cdot)})$.

Let $\widetilde{\mathfrak{B}}$ be the subspace of $\mathfrak{E}$ spanned by $\tilde{\theta}(\Irr(R))=\{\tilde{\theta}(\nu)\mid \nu\in \Irr(R)\}$.
Then $\mathfrak{E}=\mathfrak{A}\oplus\widetilde{\mathfrak{B}}$ as a vector space.
In fact, for each~$e\in \mathfrak{E}$, we have  $\pi(e)=\sum \beta_i \nu_i+\Id(R)$, where~$\beta_i\in\kk,\ \nu_i\in\Irr(R)$.
 Furthermore, we get that~$e=e-\sum \beta_i \tilde{\theta}(\nu_i)+\sum \beta_i \tilde{\theta}(\nu_i)$, where~$\pi(e-\sum \beta_i \tilde{\theta}(\nu_i))=0$ in $\mathfrak{B}$. Thus~$e-\sum \beta_i \tilde{\theta}(\nu_i)\in\mathfrak{A}$ and $\sum \beta_i \tilde{\theta}(\nu_i)\in\widetilde{\mathfrak{B}}$. This shows that~$\mathfrak{E}=\mathfrak{A}+\widetilde{\mathfrak{B}}$.
Assume~$x\in\mathfrak{A}\cap\widetilde{\mathfrak{B}}$. Then~$\pi(x)=0$, since~$x\in\mathfrak{A}$. And~$x=\sum \beta_i \tilde{\theta}(\nu_i)\in\widetilde{\mathfrak{B}}$, where~$\beta_i\in\kk,\  \nu_i\in\Irr(R)$. It follows that~$\pi(\sum \beta_i \tilde{\theta}(\nu_i))=\sum \beta_i \nu_i+\Id(R)=0$. Since $\Irr(R)$ is a linear basis of $\mathfrak{B}$, every $\beta_i$ is equal to $0$. Thus $x=0$.

Define a linear map
$$
\varphi: E_{(\mathfrak{A}, \mathfrak{B},\lfloor\  \rfloor, \cdot )}=\mathfrak{A}\oplus
\mathfrak{B}\rightarrow  \mathfrak{E}=\mathfrak{A}\oplus \widetilde{\mathfrak{B}},\
 a\mapsto a, \  \nu\mapsto  \tilde{\theta}(\nu),\ \ a \in A,\ \nu\in \Irr(R).
$$
It is clear that $\varphi$ is a surjective map. We claim that $\varphi$ is also injective. If there is an element~$\sum \alpha_i a_i+\sum \beta_j \nu_j\in E_{(\mathfrak{A}, \mathfrak{B},\lfloor\
\rfloor,\cdot)}$, where $\alpha_i,\beta_j\in\kk, a_i\in A, \nu_j\in\Irr(R)$,  such that~$\varphi(\sum \alpha_i a_i+\sum \beta_j \nu_j)=0$, then~$\sum \alpha_i a_i=0$ and~$\sum \beta_j \tilde{\theta}(\nu_j)=0$. It follows that $\pi(\sum \beta_j \tilde{\theta}(\nu_j))=\sum \beta_j \nu_j+\Id(R)=0$. Thus every~$\alpha_i=0$ and~$\beta_j=0$. So $\ker \varphi=\{0\}$.
Since $\tilde{\theta}$ preserves parity, $\varphi$ preserves parity.
 For all terms~$\nu,\nu'\in \Irr(R)$, by Lemma \ref{f=Irr+n-s-polynomials}, we have $
(\nu\nu')=\sum \delta_i\nu_i+\sum \beta_j h_{j, g_j }
$ in $\Lbs(B)$, where $\delta_i, \beta_j\in k$,  $\nu_i\in \Irr(R)$, and each~$h_{j, g_j}$ is normal $R$-polynomial with $g_j\in R$.
Then
$$
(\tilde{\theta}(\nu)\tilde{\theta}(\nu'))=\tilde{\theta}((\nu\nu'))
=\sum \delta_i \tilde{\theta}(\nu_i)+\sum \beta_j \tilde{\theta}(h_{j, g_j }).
$$
Noting that in $ \mathfrak{E}=\mathfrak{A}\oplus
\widetilde{\mathfrak{B}} $,
for all~$a,a'\in A$ and for all~$\nu, \nu'\in \Irr(R)$,
\begin{multline*}
 (a+\tilde{\theta}(\nu))(a'+\tilde{\theta}(\nu'))
 = (aa')+ (\tilde{\theta}(\nu)a')
+(a\tilde{\theta}(\nu'))+\sum \delta_i \tilde{\theta}(\nu_i)+\sum \beta_j \tilde{\theta}(h_{j, g_j })\\
 = \{a\cdot a'\}+\{\nu\cdot a'\} +\{a\cdot \nu'\}+\sum \delta_i \tilde{\theta}(\nu_i)+\sum \beta_j h_{j,\lfloor g_j  \rfloor}.
\end{multline*}

Comparing the multiplication of~$E_{(\mathfrak{A}, \mathfrak{B},\lfloor\  \rfloor,\cdot)}$ in (\ref{5.6multi}), $\varphi$ is an isomorphism.
\end{proof}

By   Lemmas \ref{le2.3} and   \ref{le2.4}, we have the following theorem which gives a complete characterization of   extensions of    $\mathfrak{B}$ by     $\mathfrak{A}$.
\begin{theorem}\label{th2.5}
Let $(\mathfrak{A}=\mathfrak{A}_0\oplus\mathfrak{A}_1, (--))$  be   a Leibniz superalgebra with  a (well-ordered) linear basis
$A=A_0\cup A_1$ and the   multiplication table: $(aa')=\{a\cdot a'\}, a, a'\in A$,
where $A_i$ is a linear basis of $\mathfrak{A}_i$, $i=0,1$.
Let
$\mathfrak{B}=\Lbs(B| R)$ be   a Leibniz superalgebra
generated by a well-ordered set $B=B_0\cup B_1$ with defining relations $R$,
where $R$  is a reduced Gr\"{o}bner-Shirshov basis in
$\Lbs(B)$ and the length of leading monomial of each polynomial in $R$ is greater than 1. Then the following statement holds.

A Leibniz superalgebra  $\mathfrak{E}$ is an extension of
$\mathfrak{B}$ by $\mathfrak{A}$ if and only if
$$
\mathfrak{E}\cong E_{ (\mathfrak{A}, \mathfrak{B},\lfloor\  \rfloor,\cdot)}= \Lbs\left(
 A\cup B
 \left|
 \begin{array}{ll}
 (aa')-\{a\cdot a'\},  & a, a'\in A  \\
 (ab)-\{a\cdot b\},  (\nu a)-\{\nu\cdot a\},   & a \in A,\ b\in B,\ \nu \in \NF(B) \\
 f- \lfloor f \rfloor, & f \in R
 \end{array}
\right.\right)
$$
for some
compatible $\Lbs(B)$-supermodule  structure on   $\mathfrak{A}$ (denote  the supermodule operation by $\cdot$)  and
some  map $ \lfloor\  \rfloor : R \rightarrow \mathfrak{A},\ f\mapsto  \lfloor f  \rfloor$
satisfying~$|f|=  |\lfloor f  \rfloor|$ such that
\begin{enumerate}
\item[(i)]
 for all $a\in A, f\in R$,
 $
a\cdot f=\{ a\cdot\lfloor f\rfloor\}
$
and~$
{f \cdot a=\{\lfloor f\rfloor \cdot a\}}
$ in $\mathfrak{A}$;

\item[(ii)] for all $\mu\in \NF(B), f\in R$, and $(\mu\ff)=\sum \alpha_{i}h_{i,s_i}$ (by Lemma \ref{le2.2}), we have
  $$
\mu\cdot\lfloor f\rfloor=\sum \alpha_i h_{i,\lfloor s_i\rfloor}\ \mbox{holds in}\ \mathfrak{A},
$$
\end{enumerate}
where  for any $x\in\mathfrak{A}$, $y\in\Lbs(B)$, $\{x\cdot y\}$ and $\{y\cdot x\}$ are linear combinations of elements in $A$, and each $h_{i,\lfloor s_i\rfloor}=\{\{\{\lfloor s_i\rfloor\cdot b_{i_1}\}\cdots\} \cdot b_{i_{n_{i}}}\}$ if $h_{i,s_i}=[s_i b_{i_1}\cdots b_{i_{n_{i}}}]_{_L}$.

If this is the case, then $\mathfrak{E}\cong E_{(\mathfrak{A}, \mathfrak{B},\lfloor\  \rfloor,\cdot)}=\mathfrak{A}\oplus\mathfrak{B}$ as a vector space, where $\mathfrak{B}$ is the vector space spanned by $\Irr(R)$
and the multiplication in~$E_{(\mathfrak{A}, \mathfrak{B},\lfloor\  \rfloor,\cdot)}$  is defined as below: for all
$a,a'\in A$ and~$\nu,\nu'\in \Irr(R)$,
$$
((a+\nu)(a'+\nu'))=\{a\cdot a'\}+\{a\cdot \nu'\}+\{\nu\cdot a'\}+\sum \beta_j h_{j,\lfloor g_j\rfloor}+\sum \delta_i\nu_i,
$$
where, in
$\Lbs( B)$, we have unique $\delta_i,\beta_j\in k$, $\nu_i\in \Irr(R)$ and unique normal $R$-polynomials $h_{j, g_j}$, $g_j\in R$, such that
 $(\nu\nu')=\sum \delta_i\nu_i+\sum \beta_j h_{j, g_j}. $
\end{theorem}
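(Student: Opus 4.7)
The plan is to derive the theorem directly from Lemmas \ref{le2.3} and \ref{le2.4}, which contain all the essential technical content. The statement consists of a biconditional together with an explicit multiplication formula, so I would handle these three pieces in turn.

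For the ``if'' direction, suppose a compatible $\Lbs(B)$-supermodule structure $\cdot$ on $\mathfrak{A}$ and a map $\lfloor\ \rfloor: R \to \mathfrak{A}$ preserving parity are given, satisfying (i) and (ii). These are exactly the hypotheses of Lemma \ref{le2.3}, whose conclusion is that $R_{(\lfloor\ \rfloor, \cdot)}$ is a Gr\"obner--Shirshov basis in $\Lbs(A \cup B)$ and that $E_{(\mathfrak{A}, \mathfrak{B}, \lfloor\ \rfloor, \cdot)}$ is an extension of $\mathfrak{B}$ by $\mathfrak{A}$. Taking $\mathfrak{E} \cong E_{(\mathfrak{A}, \mathfrak{B}, \lfloor\ \rfloor, \cdot)}$ finishes this direction.

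For the ``only if'' direction, assume $0 \to \mathfrak{A} \stackrel{i}\to \mathfrak{E} \stackrel{\pi}\to \mathfrak{B} \to 0$ is a short exact sequence. I would invoke Lemma \ref{le2.4}: choose parity-preserving lifts $\tilde{b} \in \mathfrak{E}$ of each generator $b \in B$ (so that $\pi(\tilde{b}) = b + \Id(R)$), extend to a unique homomorphism $\widetilde{\theta}: \Lbs(A \cup B) \to \mathfrak{E}$ fixing $A$ pointwise and sending $b \mapsto \tilde{b}$, and define the $\Lbs(B)$-supermodule structure on $\mathfrak{A}$ by $g \cdot a = (\widetilde{\theta}(g) a)$ and $a \cdot g = (a\,\widetilde{\theta}(g))$, together with the factor set $\lfloor f \rfloor = \widetilde{\theta}(f)$. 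Lemma \ref{le2.4} verifies that $\mathfrak{A}$ is a compatible $\Lbs(B)$-supermodule, that conditions (i) and (ii) hold, and produces the isomorphism $\mathfrak{E} \cong E_{(\mathfrak{A}, \mathfrak{B}, \lfloor\ \rfloor, \cdot)}$.

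For the explicit product formula in the ``if this is the case'' clause, since $R_{(\lfloor\ \rfloor, \cdot)}$ is a Gr\"obner--Shirshov basis in $\Lbs(A \cup B)$, Theorem \ref{cd-lemma for Lb} supplies $\Irr(R_{(\lfloor\ \rfloor, \cdot)}) = A \cup \Irr(R)$ as a $k$-basis of $E_{(\mathfrak{A}, \mathfrak{B}, \lfloor\ \rfloor, \cdot)}$, realizing $E$ as $\mathfrak{A} \oplus \mathfrak{B}$ (vector space sum, with $\mathfrak{B}$ spanned by $\Irr(R)$). The product $((a + \nu)(a' + \nu'))$ is then computed by bilinear expansion followed by reduction modulo $R_{(\lfloor\ \rfloor, \cdot)}$: the mixed pieces $(aa')$, $(a\nu')$, $(\nu a')$ reduce to $\{a\cdot a'\}$, $\{a\cdot \nu'\}$, $\{\nu\cdot a'\}$ via the $\Omega, \Upsilon, \Gamma$ relations, while $(\nu\nu')$ is first rewritten in $\Lbs(B)$ (uniquely, by Lemma \ref{le2.2}) as $\sum \delta_i\nu_i + \sum \beta_j h_{j,g_j}$, and each normal $R$-polynomial $h_{j,g_j}$ further reduces in $\Lbs(A \cup B)$ to $h_{j, \lfloor g_j\rfloor}$ through repeated application of $\Theta_{_f}$ and $\Upsilon_{_{ab}}$, exactly as carried out in Case IV(a) of the proof of Lemma \ref{le2.3}. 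This is the formula (\ref{5.6multi}) appearing there.

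The main obstacle is essentially nonexistent once Lemmas \ref{le2.3} and \ref{le2.4} are in hand: the theorem is a packaging of those two results with the basis description supplied by Theorem \ref{cd-lemma for Lb}. The only point requiring attention is to check that the product formula stated in the theorem matches the reduction carried out inside the proof of Lemma \ref{le2.3} (which it does, term for term), and to confirm that the isomorphism produced by Lemma \ref{le2.4} respects the direct-sum decomposition used to write that formula.
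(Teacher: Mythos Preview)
Your proposal is correct and matches the paper's own approach exactly: the paper does not give a separate proof of this theorem but simply states that it follows from Lemmas \ref{le2.3} and \ref{le2.4}, and your writeup spells out precisely how each direction and the multiplication formula are obtained from those lemmas together with Theorem \ref{cd-lemma for Lb}.
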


The following theorem give a specific characterization of extensions of $\mathfrak{B}$ by $\mathfrak{A}$, when $\mathfrak{B}$ is presented by a linear basis and its multiplication table.

 \begin{theorem} \label{th2.9}
 Let $(\mathfrak{A}=\mathfrak{A}_0\oplus\mathfrak{A}_1, (--))$ (resp. $(\mathfrak{B}=\mathfrak{B}_0\oplus\mathfrak{B}_1, (--))$) be a Leibniz superalgebra with a linear basis $A=A_0\cup A_1$ (resp. $B=B_0\cup B_1$), where $A_i$ (resp. $B_i$) is a linear basis of $\mathfrak{A}_i$ (resp. $\mathfrak{B}_i$), $i= 0,1$  and the
 multiplication table: $(aa')=\{a\cdot a'\}, a, a'\in A$ (resp. $(bb')=\{b\cdot b'\}, b,b'\in B$).
 Then a Leibniz superalgebra $\mathfrak{E}$ is an extension of $\mathfrak{B}$ by $\mathfrak{A}$ if and only if
  $$
\mathfrak{E}\cong  \Lbs\left(
 A\cup B
 \left|
 \begin{array}{ll}
 (aa')-\{a\cdot a'\},  & a, a'\in A  \\
 (ab)-\{a\cdot b\},\ (b a)-\{b\cdot a\}, & a\in A, b\in B\\
 (bb')-\{b\cdot b'\}-\lfloor b,b'\rfloor, & b, b'\in B
 \end{array}
\right. \right)
$$
for some  compatible $\Lbs(B)$-supermodule  structure on  $\mathfrak{A}$ (denote  the supermodule operation by $\cdot$)
 and  some
bilinear map $\lfloor , \rfloor : \mathfrak{B}\times \mathfrak{B} \rightarrow \mathfrak{A}$ with $|\lfloor b,b' \rfloor|=|b|+|b'|$, $b,b'\in B$
such that

\ITEM1 for all $a\in A, b,b'\in B$,
$
a\cdot (bb')-a\cdot \{b\cdot b'\}= \{a\cdot \lfloor b, b'\rfloor\}$ and $(bb')\cdot a-\{b\cdot b'\}\cdot a=\{\lfloor b,b'\rfloor\cdot a\}
$ in $\mathfrak{A}$;

\ITEM2 for all $b,b',b''\in B$, in $\mathfrak{A}$,
\begin{align*}
\{\lfloor b'',b\rfloor\cdot b'\}-(-1)^{|b||b'|}\{\lfloor b'',b'\rfloor\cdot b\}-\{b''\cdot \lfloor b,b'\rfloor\}\\
+\lfloor\{b''\cdot b\},b'\rfloor-(-1)^{|b||b'|}\lfloor\{b''\cdot b'\},b\rfloor-\lfloor b'',\{b\cdot b'\}\rfloor=0,
\end{align*}
where  for any $x\in\mathfrak{A}$, $y\in\Lbs(B)$, $\{x\cdot y\}$ and $\{y\cdot x\}$ are linear combinations of elements in~$A$.

If this is the case, then $\mathfrak{E}\cong E_{(\mathfrak{A}, \mathfrak{B},\lfloor\  \rfloor,\cdot)}=\mathfrak{A}\oplus\mathfrak{B}$ as a vector space
and the multiplication in~$E_{(\mathfrak{A}, \mathfrak{B},\lfloor\  \rfloor,\cdot)}$  is defined as below: for all
$a,a'\in A$ and~$\bb,\bb'\in B$,
\begin{align*}
((a+\bb)(a'+\bb'))=\{a\cdot a'\}+\{a\cdot \bb'\}+\{\bb\cdot a'\}+\lfloor\bb,\bb' \rfloor+\{\bb\cdot \bb'\}.
\end{align*}
\end{theorem}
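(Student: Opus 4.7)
The plan is to apply Theorem~\ref{th2.5}, in the refined form given by Lemma~\ref{lemma4.16}, to the particular presentation $\mathfrak{B}=\Lbs(B\mid R)$ where
\[
R:=\{f_{bb'}\mid b,b'\in B\},\qquad f_{bb'}:=(bb')-\{b\cdot b'\},
\]
and then to translate the abstract conditions (i) and (ii)$'$ of Lemma~\ref{lemma4.16} into the explicit conditions (i) and (ii) of Theorem~\ref{th2.9}.

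First I would verify that $R$ meets the hypotheses of Theorem~\ref{th2.5} and Lemma~\ref{lemma4.16}. Each $f_{bb'}$ is homogeneous of parity $|b|+|b'|$ with leading monomial $(bb')$ of length $2$, since $\{b\cdot b'\}$ is a $k$-linear combination of length-$1$ monomials in $B$. Distinct elements of $R$ have non-overlapping leading monomials, so there are no inclusion compositions. Because every element of $\NF(B)$ of length $\geq 2$ has an initial length-$2$ subterm of the form $(bb')$, we get $\Irr(R)=B$; since $B$ is a basis of $\mathfrak{B}=\Lbs(B)/\Id(R)$ by hypothesis, Theorem~\ref{cd-lemma for Lb} forces $R$ to be a Gr\"obner--Shirshov basis, and reducedness is immediate from $\supp(f_{bb'})\subseteq\{(bb')\}\cup B\subseteq\{(bb')\}\cup\Irr(R\setminus\{f_{bb'}\})$. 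Moreover $\ov{R}=\{(bb')\mid b,b'\in B\}$, so the hypothesis of Lemma~\ref{lemma4.16} is satisfied.

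Next I would identify data. A parity-preserving factor set $\lfloor\,\rfloor:R\to\mathfrak{A}$ corresponds bijectively to a parity-preserving $k$-bilinear map $\lfloor\,,\rfloor:B\times B\to\mathfrak{A}$ via $\lfloor b,b'\rfloor:=\lfloor f_{bb'}\rfloor$, and extends uniquely bilinearly to $\mathfrak{B}\times\mathfrak{B}\to\mathfrak{A}$; the relation $f-\lfloor f\rfloor$ then becomes $(bb')-\{b\cdot b'\}-\lfloor b,b'\rfloor$, matching the presentation of Theorem~\ref{th2.9}. By Lemma~\ref{le2.4}, $\mathfrak{E}\cong\Lbs(A\cup B\mid R_{(\lfloor\,\rfloor,\cdot)})$ whenever condition (i)--(ii) of Lemma~\ref{le2.3} holds; and by Lemma~\ref{lemma4.16}, since $\ov{R}=\{(bb')\mid b,b'\in B\}$, the subset $R'_{(\lfloor\,\rfloor,\cdot)}\subseteq R_{(\lfloor\,\rfloor,\cdot)}$ is itself a Gr\"obner--Shirshov basis, and both quotients have $A\cup B$ as a $k$-basis by Theorem~\ref{cd-lemma for Lb}, so the surjection $\Lbs(A\cup B\mid R'_{(\lfloor\,\rfloor,\cdot)})\twoheadrightarrow\Lbs(A\cup B\mid R_{(\lfloor\,\rfloor,\cdot)})$ is an isomorphism.

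The last and main step is to translate conditions (i) and (ii)$'$ of Lemma~\ref{lemma4.16}. Condition (i), applied to $a\in A$ and $f=f_{bb'}$, expands by $k$-bilinearity of the supermodule action directly into Theorem~\ref{th2.9}(i). For condition (ii)$'$ at $\mu=b''\in B$ and $f=f_{bb'}$, I would expand $(b''f)$ using the Leibniz superidentity, rewrite every length-$2$ subterm $(cb')$, $(cb)$, $(b''c)$ with $c\in B$ as $f_{cc'}+\{c\cdot c'\}$, and collect. The scalar remainder $\{\{b''\cdot b\}\cdot b'\}-(-1)^{|b||b'|}\{\{b''\cdot b'\}\cdot b\}-\{b''\cdot\{b\cdot b'\}\}$ lies in the $k$-span of $B\subseteq\Lbs(B)$ and vanishes in $\mathfrak{B}$ by the Leibniz superidentity, hence equals $0$ in $\Lbs(B)$ by linear independence of $B$; what remains is an explicit linear combination of normal $R$-polynomials. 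Applying the factor set and the bilinear extension of $\lfloor\,,\rfloor$, the identity $b''\cdot\lfloor b,b'\rfloor=(b''f)_{\lfloor R\rfloor}$ rearranges into Theorem~\ref{th2.9}(ii), the terms of the form $\sum_c\lambda_c\lfloor c,b'\rfloor$ being identified with $\lfloor\{b''\cdot b\},b'\rfloor$ and similarly for the others. The multiplication formula in the final clause follows from Theorem~\ref{th2.5}: for $b,b'\in\Irr(R)=B$ we have $(bb')=\{b\cdot b'\}+f_{bb'}$ in $\Lbs(B)$, so the decomposition $(bb')=\sum\delta_i\nu_i+\sum\beta_j h_{j,g_j}$ contributes $\sum\delta_i\nu_i=\{b\cdot b'\}$ and $\sum\beta_j h_{j,\lfloor g_j\rfloor}=\lfloor b,b'\rfloor$. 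The main obstacle throughout is the sign bookkeeping across the Leibniz superidentity and the precise matching of reduced $R$-polynomial summands with the bilinear extensions $\lfloor\{b''\cdot b\},b'\rfloor$ and $\lfloor b'',\{b\cdot b'\}\rfloor$.
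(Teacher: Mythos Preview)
Your proposal is correct and follows essentially the same route as the paper: set $R=\{(bb')-\{b\cdot b'\}\mid b,b'\in B\}$, check it is a reduced Gr\"obner--Shirshov basis with $\ov{R}=\{(bb')\}$, invoke Lemmas~\ref{lemma4.16} and~\ref{le2.4}, and then translate condition~(i) verbatim and condition~(ii)$'$ by expanding $(b''f_{bb'})$ via the Leibniz superidentity into normal $R$-polynomials plus a scalar remainder in $B$ that vanishes by the Leibniz identity in $\mathfrak{B}$. Your extra step justifying that $R'_{(\lfloor\,\rfloor,\cdot)}$ and $R_{(\lfloor\,\rfloor,\cdot)}$ present the same quotient (via the common basis $A\cup B$) is a detail the paper leaves implicit in its citation of Lemma~\ref{lemma4.16}, but otherwise the arguments coincide.
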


\begin{proof}
Let $R=\{(bb')-\{b\cdot b'\}\mid b,b'\in B\}$ and $B$ be a well-ordered set with $\deg(b)=1$ for all $b\in B$. It is easy to know that with the deg-length-lex order, $R$ is a reduced \gsb\ in $\Lbs(B)$. Let  $\lfloor \ \rfloor: R\rightarrow \mathfrak{A},\ (bb')-\{b\cdot b'\}\mapsto \lfloor b,b' \rfloor$. Then $\lfloor\  \rfloor $ is a factor set of $\mathfrak{B}$ in $\mathfrak{A}$.
According to Lemmas \ref{lemma4.16} and   \ref{le2.4},  we only need to show that the condition $(ii)'$ in  Lemma \ref{lemma4.16} is now the identity in $(ii)$.

For all $b''\in B,\ f=(bb')-\{b\cdot b'\}\in R$, since $\mathfrak{B}$ is a Leibniz superalgebra, we have $-\{\{b''\cdot b\}\cdot b'\}+(-1)^{|b||b'|}\{\{b''\cdot b'\}\cdot b\}-\{b''\cdot \{b\cdot b'\}\}=0$ in~$\mathfrak{B}$. Thus,
\begin{align*}
(b'' f)&=(b'' f)_{_R}=((b'' b)b')-(-1)^{|b||b'|}((b'' b')b)-(b''\{b\cdot b'\})\\
&=(((b'' b)-\{b''\cdot b\})b')+((\{b''\cdot b\}b')-\{\{b''\cdot b\}\cdot b'\})+\{\{b''\cdot b\}\cdot b'\}\\
&\ \ \ -(-1)^{|b||b'|}((((b'' b')-\{b''\cdot b'\})b)+((\{b''\cdot b'\}b)-\{\{b''\cdot b'\}\cdot b\})+\{\{b''\cdot b'\}\cdot b\}
)\\
&\ \ \ -((b''\{b\cdot b'\})-\{b''\cdot\{b\cdot b'\}\})-\{b''\cdot\{b\cdot b'\}\}\\
&=(((b'' b)-\{b''\cdot b\})b')+((\{b''\cdot b\}b')-\{\{b''\cdot b\}\cdot b'\})\\
&\ \ \ -(-1)^{|b||b'|}((((b'' b')-\{b''\cdot b'\})b)+((\{b''\cdot b'\}b)-\{\{b''\cdot b'\}\cdot b\})
)\\
&\ \ \ -((b''\{b\cdot b'\})-\{b''\cdot\{b\cdot b'\}\}),
\end{align*}
where
$(((b'' b)-\{b''\cdot b\})b')$, $(\{b''\cdot b\}b')-\{\{b''\cdot b\}\cdot b'\},\ (((b'' b')-\{b''\cdot b'\})b),$ ${(\{b''\cdot b'\}b)-\{\{b''\cdot b'\}\cdot b\}}$ and
 $(b''\{b\cdot b'\})-\{b''\cdot\{b\cdot b'\}\}$ are normal $R$-polynomials in~$\Lbs(B)$. It follows that
\begin{align*}
&\ \ \ b''\cdot\lfloor f\rfloor=\{b''\cdot\lfloor b,b'\rfloor\}\\
&=\{\lfloor b'', b\rfloor\cdot b'\}-(-1)^{|b||b'|}\{\lfloor b'', b'\rfloor\cdot b\}
+\lfloor\{b''\cdot b\},b'\rfloor-(-1)^{|b||b'|}\lfloor\{b''\cdot b'\},b\rfloor  -\lfloor b'',\{b\cdot b'\}\rfloor.
\end{align*}

The proof is completed.
\end{proof}

 Now, we consider a special case of Theorems \ref{th2.5} and \ref{th2.9} when  $\mathfrak{A}^2=0$. Furthermore, the extension $\mathfrak{E}$ obtained in following corollary is isomorphic to the second cohomology group. 

\begin{coro}\label{th2.8} Let $(\mathfrak{A}=\mathfrak{A}_0\oplus\mathfrak{A}_1, (--))$  be  an abelian Leibniz superalgebra with  a (well-ordered) linear basis
$A=A_0\cup A_1$,
where $A_i$ is a linear basis of $\mathfrak{A}_i$, $i=0,1$.
Let
$\mathfrak{B}=\Lbs(B| R)=\Lbs(B)/\Id(R)$ be a Leibniz superalgebra
generated by a well-ordered set $B=B_0\cup B_1$ with defining relations $R$,
where $R$  is a reduced Gr\"{o}bner-Shirshov basis in
$\Lbs(B)$ and the length of leading monomial of each polynomial in $R$ is greater than 1.
Then a Leibniz superalgebra $\mathfrak{E}$ is an extension of $\mathfrak{B}$ by $\mathfrak{A}$ if and only if
  $$
\mathfrak{E}\cong E_{(\mathfrak{A}, \mathfrak{B},\lfloor\  \rfloor,\cdot)}= \Lbs\left(
 A\cup B
 \left|
 \begin{array}{ll}
 (aa'),  & a, a'\in A  \\
 (ab)-\{a\cdot b\},  & a \in A,  \bb \in B \\
 (\nu a)-\{\nu\cdot a\},  & a \in A,  \nu \in \NF(B) \\
f-\lfloor f\rfloor,  & f \in R
 \end{array}
\right. \right)
$$
for some
  $\mathfrak{B}$-supermodule structure on $\mathfrak{A}$  (denote the supermodule operation by $\cdot$) and    some  map $ \lfloor\  \rfloor : R \rightarrow \mathfrak{A},\ f\mapsto  \lfloor f  \rfloor$
satisfying~$|f|=  |\lfloor f  \rfloor|$ such that
for all $\mu\in \NF(B), f\in R$, and $(\mu\ff)=\sum \alpha_{i}h_{i,s_i}$ (by Lemma \ref{le2.2}), we have
  $$
\mu\cdot\lfloor f\rfloor=\sum \alpha_i h_{i,\lfloor s_i\rfloor} \ \mbox{holds in}\ \mathfrak{A},
$$
where for any $x\in\mathfrak{A}$, $y\in\NF(B)$, $x\cdot y:=x\cdot (y+\Id(R))$, $y\cdot x:=(y+\Id(R))\cdot x$; and $\{x\cdot y\}$, $\{y\cdot x\}$ are linear combinations of elements in $A$,
and each $h_{i,\lfloor s_i\rfloor}=\{\{\{\lfloor s_i\rfloor\cdot b_{i_1}\} \cdots\} \cdot b_{i_{n_{i}}}\}$ if $h_{i,s_i}=[s_i b_{i_1}\cdots b_{i_{n_{i}}}]_{_L}$.

If this is the case, then $\mathfrak{E}\cong E_{(\mathfrak{A}, \mathfrak{B},\lfloor\  \rfloor,\cdot)}=\mathfrak{A}\oplus\mathfrak{B}$ as a vector space, where $\mathfrak{B}$ is the vector space spanned by $\Irr(R)$
and the multiplication in~$E_{(\mathfrak{A}, \mathfrak{B},\lfloor\  \rfloor,\cdot)}$  is defined as below: for all
$a,a'\in A$ and~$\nu,\nu'\in \Irr(R)$,
$$
((a+\nu)(a'+\nu'))=\{a\cdot \nu'\}+\{\nu\cdot a'\}+\sum \beta_j h_{j,\lfloor g_j\rfloor}+\sum \delta_i\nu_i,
$$
where, in
$\Lbs( B)$, we have unique $\delta_i,\beta_j\in k$, $\nu_i\in \Irr(R)$ and unique normal $R$-polynomials $h_{j, g_j}$, $g_j\in R$, such that
 $(\nu\nu')=\sum \delta_i\nu_i+\sum \beta_j h_{j, g_j}. $
\end{coro}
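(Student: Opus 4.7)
The plan is to specialize Theorem \ref{th2.5} to the case $\mathfrak{A}^2=0$ and check that each ingredient collapses to what is claimed in Corollary \ref{th2.8}. Since $\mathfrak{A}$ is abelian, the product in $\mathfrak{A}$ is identically zero, so $\{a\cdot a'\}=(aa')=0$ for all $a,a'\in A$, and the defining relation $(aa')-\{a\cdot a'\}$ in the presentation given by Theorem \ref{th2.5} becomes simply $(aa')$. Similarly, any product of two elements of $\mathfrak{A}$ vanishes; in particular $\{a\cdot\lfloor f\rfloor\}=0=\{\lfloor f\rfloor\cdot a\}$ for all $a\in A$, $f\in R$. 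Hence condition (i) of Theorem \ref{th2.5} reduces to $a\cdot f=0$ and $f\cdot a=0$ for all $a\in A$, $f\in R$, while condition (ii) is formally unchanged.

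The next step, and the main technical point, is to show that a compatible $\Lbs(B)$-supermodule structure on $\mathfrak{A}$ satisfying these vanishing conditions is exactly the same data as a $\mathfrak{B}$-supermodule structure on $\mathfrak{A}$. One direction is trivial: any $\mathfrak{B}$-supermodule pulls back along $\Lbs(B)\twoheadrightarrow \mathfrak{B}$ to a $\Lbs(B)$-supermodule on which $R$ (indeed, all of $\Id(R)$) acts as zero, and the compatibility axioms are automatic because $\mathfrak{A}^2=0$. For the converse, I will prove that if $a\cdot f=0=f\cdot a$ for every $a\in A$ and $f\in R$, then $a\cdot r=0=r\cdot a$ for every $a\in \mathfrak{A}$ and $r\in\Id(R)$, which guarantees that the $\Lbs(B)$-action descends to $\mathfrak{B}$.

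The argument will be an induction on the natural filtration $R_0\subseteq R_1\subseteq\cdots$ of $\Id(R)$, with $R_0=\mathrm{span}(R)$ and $R_{n+1}$ spanned by $R_n$ together with all products $(r'y)$ and $(yr')$ for $r'\in R_n$ and $y\in \Lbs(B)$. The base case is the hypothesis together with linearity. For the inductive step, using the supermodule axioms
\[
a\cdot(xy)=(a\cdot x)\cdot y-(-1)^{|x||y|}(a\cdot y)\cdot x,\qquad x\cdot(y\cdot a)=(xy)\cdot a-(-1)^{|a||y|}(x\cdot a)\cdot y,
\]
one rewrites $a\cdot(r'y)$, $a\cdot(yr')$, $(r'y)\cdot a$, and $(yr')\cdot a$ as combinations of expressions of the form $(a\cdot u)\cdot r'$, $r'\cdot(u\cdot a)$, $a\cdot r'$, $r'\cdot a$, and $u\cdot(r'\cdot a)$, each of which involves $r'\in R_n$ acting on some element of $\mathfrak{A}$; by the inductive hypothesis they all vanish. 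This yields the descent of the action to $\mathfrak{B}$.

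Once the equivalence of structures is established, the remainder of Corollary \ref{th2.8} follows directly from Theorem \ref{th2.5}: the presentation is rewritten using the reduction $\{a\cdot a'\}=0$, condition (ii) becomes the displayed equation in $\mathfrak{A}$ (interpreted via the $\mathfrak{B}$-action), and the multiplication formula for $E_{(\mathfrak{A},\mathfrak{B},\lfloor\ \rfloor,\cdot)}$ loses its $\{a\cdot a'\}$ summand, yielding exactly the formula stated. The expected obstacle is the bookkeeping in the inductive descent above, especially handling the right actions via the second supermodule axiom; once this is carefully laid out, no further work is needed beyond citing Theorem \ref{th2.5}.
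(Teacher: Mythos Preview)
Your plan is correct and essentially matches the paper's argument: both specialize Theorem~\ref{th2.5} to the abelian case, observe that $\{a\cdot a'\}=0$ and $\{a\cdot\lfloor f\rfloor\}=\{\lfloor f\rfloor\cdot a\}=0$, so condition~(i) collapses to $a\cdot f=0=f\cdot a$, and then identify compatible $\Lbs(B)$-supermodule structures satisfying this with $\mathfrak{B}$-supermodule structures.

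The one noteworthy difference is in how the descent of the action from $\Lbs(B)$ to $\mathfrak{B}$ is justified. You prove directly, by induction on an ideal filtration and the supermodule axioms, that $a\cdot f=0=f\cdot a$ for $f\in R$ forces $a\cdot r=0=r\cdot a$ for all $r\in\Id(R)$. The paper instead appeals at the end to Lemma~\ref{le2.4}: the $\Lbs(B)$-supermodule produced there is given concretely by $g\cdot a=(\tilde\theta(g)a)$ with $\tilde\theta:\Lbs(A\cup B)\to\mathfrak{E}$ a homomorphism, and since $\tilde\theta(\Id(R))\subseteq\ker\pi=\mathfrak{A}$ one gets $r\cdot a\in\mathfrak{A}^2=0$ for $r\in\Id(R)$ immediately. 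Your inductive argument is a bit more work but has the advantage of showing descent for \emph{any} $\Lbs(B)$-supermodule satisfying condition~(i), not only the particular one built in Lemma~\ref{le2.4}; the paper's route is shorter because it exploits that specific construction.
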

\begin{proof}
Suppose that $\mathfrak{A}$ is a $\mathfrak{B}$-supermodule with the given property. Then   $\mathfrak{A}$ is a $\Lbs(B)$-supermodule if we define
$$
a\cdot g:=a\cdot (g+Id(R)), \ g\cdot a:= (g+Id(R))\cdot a, \ \mbox{for\ all}\ a\in A, g\in \Lbs(B).
$$
Now, since $\mathfrak{A}^2=0$, for all $a\in A, f\in R$, we have
$$
a\cdot f=a\cdot (f+Id(R))=0=\{ a\cdot\lfloor f\rfloor\},\ \mbox{and}\ f \cdot a= (f+Id(R))\cdot a=0=\{\lfloor f\rfloor \cdot a\}
$$
in $\mathfrak{A}$. Thus, the conditions (i) and (ii) in Theorem \ref{th2.5} hold. It follows that $\mathfrak{E}$ is an extension of $\mathfrak{B}$ by $\mathfrak{A}$.

Conversely, suppose $\mathfrak{E}$ is an extension of $\mathfrak{B}$ by $\mathfrak{A}$. Then by Theorem \ref{th2.5}, there is a $\Lbs(B)$-supermodule on $\mathfrak{A}$ and a factor set such that (i) and (ii) in Theorem \ref{th2.5} hold. Define
$$
a\cdot (g+Id(R)):=a\cdot g, \ (g+Id(R))\cdot a:= g\cdot a , \ \mbox{for\ all}\ a\in A, g\in \Lbs(B).
$$
Then $\mathfrak{A}$ is a $\mathfrak{B}$-supermodule and $(\mu+\Id(R))\cdot\lfloor f\rfloor=\sum \alpha_i h_{i,\lfloor s_i\rfloor} \ \mbox{holds in}\ \mathfrak{A}$.
Now the result follows by Lemma \ref{le2.4}.
\end{proof}

 By Theorem \ref{th2.9} and a similar proof of Corollary \ref{th2.8}, we have the following corollary.

\begin{coro}\label{th2.8'}
 Let $(\mathfrak{A}=\mathfrak{A}_0\oplus\mathfrak{A}_1, (--))$ (resp. $(\mathfrak{B}=\mathfrak{B}_0\oplus\mathfrak{B}_1, (--))$) be a Leibniz superalgebra with a linear basis $A=A_0\cup A_1$ (resp. $B=B_0\cup B_1$), where $A_i$ (resp. $B_i$) is a linear basis of $\mathfrak{A}_i$ (resp. $\mathfrak{B}_i$), $i= 0,1$  and the
 multiplication table: $(aa')=0, a, a'\in A$ (resp. $(bb')=\{b\cdot b'\}, b,b'\in B$).
Then
a Leibniz superalgebra $\mathfrak{E}$  is an extension  of $\mathfrak{B}$ by $\mathfrak{A}$  if and only if
$$
\mathfrak{E}\cong  \Lbs\left(
 A\cup B
 \left|
 \begin{array}{ll}
 (aa'),  & a, a'\in A  \\
 (ab)-\{a\cdot b\},\ (b a)-\{b\cdot a\}, & a\in A, b\in B\\
 (bb')-\{b\cdot b'\}-\lfloor b,b'\rfloor, & b, b'\in B
 \end{array}
\right. \right)
$$
for some
bilinear map $\lfloor , \rfloor : \mathfrak{B}\times \mathfrak{B} \rightarrow \mathfrak{A}$ with $|\lfloor b,b' \rfloor|=|b|+|b'|$, $b,b'\in B$
such that for all $b,b',b''\in B$,
\begin{align*}
\{\lfloor b'',b\rfloor\cdot b'\}-(-1)^{|b||b'|}\{\lfloor b'',b'\rfloor\cdot b\}-\{b''\cdot \lfloor b,b'\rfloor\}\\
+\lfloor\{b''\cdot b\},b'\rfloor-(-1)^{|b||b'|}\lfloor\{b''\cdot b'\},b\rfloor-\lfloor b'',\{b\cdot b'\}\rfloor=0,
\end{align*}
where  for any $x\in\mathfrak{A}$, $y\in\mathfrak{B}$, $\{x\cdot y\}$ and $\{y\cdot x\}$ are linear combinations of elements in~$A$.

If this is the case, then $\mathfrak{E}\cong E_{(\mathfrak{A}, \mathfrak{B},\lfloor\  \rfloor,\cdot)}=\mathfrak{A}\oplus\mathfrak{B}$ as a vector space
and the multiplication in~$E_{(\mathfrak{A}, \mathfrak{B},\lfloor\  \rfloor,\cdot)}$  is defined as below: for all
$a,a'\in A$ and~$\bb,\bb'\in B$,
\begin{align*}
((a+\bb)(a'+\bb'))=\{a\cdot \bb'\}+\{\bb\cdot a'\}+\lfloor\bb,\bb' \rfloor+\{\bb\cdot \bb'\}.
\end{align*}
 \end{coro}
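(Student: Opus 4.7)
The plan is to specialize Theorem~\ref{th2.9} to the hypothesis $\mathfrak{A}^2=0$, following the pattern used to derive Corollary~\ref{th2.8} from Theorem~\ref{th2.5}. Put $R=\{(bb')-\{b\cdot b'\}\mid b,b'\in B\}$; with the deg-length-lex order this is a reduced Gr\"obner--Shirshov basis presenting $\mathfrak{B}=\Lbs(B)/\Id(R)$, and every leading monomial has length $2>1$, so Theorem~\ref{th2.9} applies verbatim to this $\mathfrak{B}$. A map $\lfloor\,,\,\rfloor:\mathfrak{B}\times\mathfrak{B}\to\mathfrak{A}$ plays the role of the factor set $\lfloor\ \rfloor:R\to\mathfrak{A}$ via $(bb')-\{b\cdot b'\}\mapsto\lfloor b,b'\rfloor$.

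The first reduction is that under $\mathfrak{A}^2=0$ the three compatibility axioms of a \emph{compatible} $\Lbs(B)$-supermodule collapse automatically: in each axiom every monomial contains at least one factor that is a product of two elements of~$\mathfrak{A}$ (either the explicit product $ff'$ or a nested module action whose outermost operation lands on two $\mathfrak{A}$-elements), and all such factors vanish. Hence asking that $\mathfrak{A}$ be a compatible $\Lbs(B)$-supermodule is no stronger than asking it to be a $\Lbs(B)$-supermodule in the bare sense (the two bilinear operations satisfying the three Leibniz-type axioms involving a single $\mathfrak{A}$-element). Moreover, the bilinearity of $\lfloor\,,\,\rfloor$ extends $\lfloor\ \rfloor$ uniquely to all of~$R$ while preserving parity.

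The second reduction is that condition~(i) of Theorem~\ref{th2.9}, namely
$a\cdot(bb')-a\cdot\{b\cdot b'\}=\{a\cdot\lfloor b,b'\rfloor\}$ and $(bb')\cdot a-\{b\cdot b'\}\cdot a=\{\lfloor b,b'\rfloor\cdot a\}$, has both right-hand sides equal to zero because $\lfloor b,b'\rfloor\in\mathfrak{A}$ and $\mathfrak{A}^2=0$. So (i) becomes $a\cdot(bb')=a\cdot\{b\cdot b'\}$ and the symmetric identity, which are exactly the assertions that the $\Lbs(B)$-action on $\mathfrak{A}$ kills $\Id(R)$ on either side and therefore descends to a well-defined $\mathfrak{B}$-supermodule structure; conversely any $\mathfrak{B}$-supermodule on $\mathfrak{A}$ pulls back to a $\Lbs(B)$-supermodule satisfying~(i). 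Finally, condition~(ii) of Theorem~\ref{th2.9} is literally the cocycle-type identity stated in the corollary, since the only $f\in R$ is $(bb')-\{b\cdot b'\}$ and the corresponding $(b''f)_{_R}$ computation in $\Lbs(B)$ produces exactly the six normal $R$-polynomials whose images under $\lfloor\ \rfloor$ give the six terms of the identity (as carried out explicitly in the proof of Theorem~\ref{th2.9}).

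Combining these two reductions, the forward and backward implications follow immediately from Theorem~\ref{th2.9}, and the multiplication formula on $\mathfrak{A}\oplus\mathfrak{B}$ is obtained from the one in Theorem~\ref{th2.9} simply by dropping $\{a\cdot a'\}=0$. I expect no genuine obstacle; the only place requiring care is the notational convention that $\{x\cdot y\}$ denotes the product in $\mathfrak{A}$ when both $x,y\in\mathfrak{A}$ and the supermodule action otherwise, which is what makes the vanishing arguments in both reductions go through uniformly.
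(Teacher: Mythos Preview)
Your proposal is correct and follows exactly the approach the paper indicates: the paper's own proof is the single sentence ``By Theorem~\ref{th2.9} and a similar proof of Corollary~\ref{th2.8}, we have the following corollary,'' and you have filled in precisely those details---showing that the compatibility axioms and condition~(i) of Theorem~\ref{th2.9} become vacuous or reduce to descent of the module structure when $\mathfrak{A}^2=0$, while condition~(ii) is unchanged. One small wording point: condition~(ii) of Theorem~\ref{th2.9} is already literally the cocycle identity in the corollary (the $(b''f)_{_R}$ computation you mention was done in the \emph{proof} of Theorem~\ref{th2.9}, not needed again here), but this does not affect the argument.
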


\end{document}